\newtheorem{theorem}{Theorem}
\newtheorem{lemma}{Lemma}
\newtheorem{corollary}{Corollary}
\newtheorem{remark}{Remark}
\title{\bf \Large Mixed graphs with smallest eigenvalue greater than $-\frac{\sqrt{5}+1}{2}$}
\author{
{\small  Lu Lu$^{a}$,\ \ Zhenzhen Lou$^{b}$\footnote{Corresponding author.
\newline{\it \hspace*{5mm}Email addresses:} xjdxlzz@163.com (Z. Lou), lulugdmath@163.com (L. Lu).},\ \ 
Qiongxiang Huang $^{b}$}\\[2mm]
\footnotesize $^a$ School of Mathematics and Statistics, Central South University, Changsha, Hunan, 410083, China\\
\footnotesize $^b$ College of Mathematics and Systems Science, Xinjiang University, Xinjiang, Urumqi, 830046, China}
\date{ }
\begin{document}

\maketitle

\begin{abstract}
The classical problem of characterizing the graphs with bounded eigenvalues may date back to the work of Smith in 1970. Especially, the research on graphs with smallest eigenvalues not less than $-2$ has attracted widespread attention. Mixed graphs are natural generalization of undirected graphs.
In this paper, we completely characterize the mixed graphs with smallest Hermitian eigenvalue greater than
$-\frac{\sqrt{5}+1}{2}$, which consists of three infinite classes of mixed graphs and $30$ scattered mixed graphs. By the way, we get a new class of mixed graphs switching equivalent to their underlying graphs.\\

\noindent {\it AMS classification:} 05C50\\[1mm]
\noindent {\it Keywords}: Mixed graph; Hermitian matrix; Smallest eigenvalue
\end{abstract}

\baselineskip=0.202in

\section{Introduction}\label{s-1}
It is a classical problem in Spectral Graph Theory to characterize the graphs whose eigenvalues are bounded. The research of
such problems may date back to the work of Smith in 1970 \cite{Smith}. This work stimulated the interest of researchers.
There are a lot of results in the literature concerning the topic. In 1972, Hoffman \cite{Hoffman1} obtained all limit
points of the spectral radius of non-negative symmetric matrices smaller than $\frac{\sqrt{5}+1}{2}$. In 1982, Cvetkovi\'{c}
et al. \cite{Cvetkovic1} characterized the graphs whose spectral radius does not exceed $\sqrt{2+\sqrt{5}}$ and in 1989,
Brouwer and Neumaier \cite{Brouwer1} determined the graphs with spectral radius between $2$ and $\sqrt{2+\sqrt{5}}$ and
later, Woo and Neumaier \cite{Woo} described the structure of graphs whose spectral radii are bounded above by
$3\sqrt{2}/2$. With respect to the smallest eigenvalues, Hoffman \cite{Hoffman2} investigated the graphs whose smallest
eigenvalue exceeds $-1-\sqrt{2}$, and this work was continued by Taniguchi et al. \cite{Tani1,Tani2,Tani3}. Furthermore, Munemasa et al. \cite{Munemasa} showed that all fat Hoffman graphs with smallest eigenvalue at least $-\frac{\sqrt{5}+1}{2}$ (which is just $-1-\tau$ where $\tau$ is the golden ratio) can be described by a finite set of fat $(-1-\tau)$-irreducible Hoffman graphs. Especially, the
graphs with smallest eigenvalue $-2$ attracted a lot of attention, and we refer the reader to the survey \cite{Cvetkovic2}
and the book \cite{Cvetkovic3}. Recently, Abdollahi et al. \cite{Abdollahi} classified all distance-regular Cayley graphs with least eigenvalue $-2$ and diameter at most three, and Koolen et al. \cite{Koolen} proved that a connected graph with smallest eigenvalue at least $-3$ and large enough minimal degree is $2$-integrable. In this paper we consider the smallest Hermitian eigenvalues of mixed graphs.

A mixed graph is defined to be an ordered triple $(V,E,A)$, where $V$ is the vertex set, $E$ is the undirected edge set and
$A$ is the directed edge set. Note that, if both $uv$ and $vu$ are directed edges, then we regard $\{u,v\}$ an undirected
edge.
Thus, if $(u,v)\in A$ then $(v,u)\not\in A$. Clearly, if $A=\emptyset$ then the mixed graph turns to be a graph and if
$E=\emptyset$ then the mixed graph turns to be an oriented graph. For convenience, we write $u\leftrightarrow v$ if
$\{u,v\}\in E$
and $u\rightarrow v$ is $(u,v)\in A$. Let $M=(V,E,A)$ be a mixed graph with $V=\{v_1,v_2,\ldots,v_n\}$. The {\it underlying
graph} $\Gamma(M)$ is a graph with vertex set $V$ and two vertices $u\sim v$ if $u\leftrightarrow v$ or $u\rightarrow v$ or
$v\rightarrow u$. For $U\subseteq V$ and $W\in V\setminus U$, denote by $N_W(U)=\{w\mid w\in W,u\sim w\textrm{ in $G$ for
some $u\in U$}\}$. Especially, if $U=\{u\}$ then $N_W(u)$ is the set of neighbors of $u$ in $W$. Moreover, denote by
$N_W^+(u)=\{w\mid u\rightarrow w\}$, $N_W^-(u)=\{w\mid u\leftarrow w\}$ and $N_W^o(u)=\{w\mid u\leftrightarrow w\}$. It is
clear that $N_W(u)=N_W^+(u)\cup N_W^-(u)\cup N_W^o(u)$. As usual, we always write $P_n$, $C_n$, $K_{n_1,n_2,\ldots,n_k}$ the
path, the cycle and the complete multipartite graph of the corresponding order. For two graphs $G$ and $H$, the {\it union
$G\cup H$} is the graph with vertex set $V(G)\cup V(H)$ and edge set $E(G)\cup E(H)$. The \emph{join} $G\nabla H$ is the
graph obtained from $G\cup H$ by adding all edges between $G$ and $H$. The {\it distane} of two vertices $u,v\in V(G)$ in
$G$ is the length of a shortest path from $u$ to $v$ in $G$, denoted by
$d_G(u,v)$. The {\it diameter} of $G$ is the largest distance in $G$, denoted by $d(G)$. All other notions not mentioned
here are standard in \cite{Godsil}.

We always write $M_G$ for $M$ when the underlying graph $\Gamma(M)=G$. Moreover, for a graph $G$, denote by $\mathcal{M}_G$
the set of mixed graphs with underlying graph $G$. Especially, if $M_G=G$ then we write $G$ for $M_G$. The mixed graph $M_G$
is {\it connected} if $G$ is connected and we
always consider the connected mixed graphs in this paper. The {\it diameter} of $M_G$ is defined to be the diameter of $G$,
denoted by $d(M_G)$. For a subset $U\subseteq V$, the mixed subgraph induced by $U$ is the mixed graph $M_G[U]=(U,E',A')$
with $E'=\{\{u,v\}\mid u,v\in U, \{u,v\}\in E\}$ and $A'=\{(u,v)\mid u,v\in U,(u,v)\in A\}$. As usual, for a vertex $v$,  the (mixed) graph $G-v$ (resp. $M_G-v$) is the induced (resp. mixed) subgraph obtained from $G$ (resp. $M_G$) by deleting the vertex $v$ and associated edges. The {\it Hermitian matrix} of
$M_G$ is defined to be a square
matrix $H(M_G)=[h_{st}]_{n\times n}$ with
\[h_{st}=\left\{\begin{array}{cccc}
1,& v_s\leftrightarrow v_t,\\
i,& v_s\rightarrow v_t,\\
-i,&v_t\rightarrow v_s,\\
0, &\textrm{ otherwise},
\end{array}\right.\]
which was proposed by Liu and Li \cite{Liu} and Guo and Mohar \cite{Guo} independently. Since $H(M_G)$ is a Hermitian
matrix, all eigenvalues of $H(M_G)$ are real and listed as $\lambda_1\ge\lambda_2\ge\cdots\ge\lambda_n$. The collection of
such eigenvalues is the spectrum of $H(M_G)$. The {\it Hermitian spectrum} of the mixed graph $M_G$ is just the spectrum of
$H(G)$, denoted by $\operatorname{Sp}(M_G)$. Two mixed graphs $M_G,M_G'\in \mathcal{M}_G$ are {\it switching equivalent} if
there exists a diagonal matrix $D$ whose entries belong to $\{\pm1,\pm i\}$ such that $H(M_G')=DH(M_G)D^*$. It is clear that
the relation switching equivalence is an equivalent relation. Thus, denote by $[M_G]$ the equivalence class containing
$M_G$ with respect to switching equivalence. Obviously, all graphs in $[M_G]$ share the same spectrum. Recently,  Wissing and Dam \cite{Wissing} determined all mixed graphs with exactly one negative eigenvalue. Guo and
Mohar \cite{Guo2} determined all mixed graphs with $\lambda_1<2$ and Yuan et al. \cite{Yuan} characterized all mixed graphs
with $\lambda_1\le 2$ when $G$ contains no cycles of length $4$. 

In this paper, we completely determine the connected mixed graphs with smallest Hermitian eigenvalue greater than
$-\frac{\sqrt{5}+1}{2}$, which consists of three infinite classes and $30$ scattered mixed graphs (see Theorem \ref{thm-f-4}). As a byproduct, we get an interesting type of mixed graphs switching equivalent to their underlying graphs (see Theorem \ref{lem-x-1}).

\section{Preliminaries}
In this part, we will introduce some results which will be used latter. We first present the famous interlacing theorem with
respect to Hermitian matrix.
\begin{lemma}[\cite{Bollobas}]\label{lem-f-1}
Let the matrix $S$ of size $m \times n$ be such that $S^*S = I_m$ and let $H$ be a Hermitian
matrix of size $n$ with eigenvalues $\lambda_1\geq\lambda_2\ge\cdots\ge\lambda_n$. Set $B =S^* HS$ and
let $\mu_1\geq\mu_2\ge\cdots\ge\mu_m$ be the eigenvalues of $B$.
Then the eigenvalues of $H$ and $B$ are interlaced, that is,
$\lambda_i\geq\mu_i\geq\lambda_{n-m+i}$ for $i=1,2,...,m$.
\end{lemma}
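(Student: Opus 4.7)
The plan is to derive both inequalities from the Courant--Fischer min--max characterization of eigenvalues of Hermitian matrices, together with the fact that the hypothesis $S^*S=I_m$ makes $S$ an isometric embedding of $\mathbb{C}^m$ into $\mathbb{C}^n$. Concretely, for every $y\in\mathbb{C}^m$ one has $\|Sy\|^2=y^*S^*Sy=y^*y$ and $(Sy)^*H(Sy)=y^*By$, so $S$ is injective, preserves norms, and carries the Rayleigh quotient of $B$ on any subspace $V\subseteq\mathbb{C}^m$ to the Rayleigh quotient of $H$ on the image $S(V)\subseteq\mathbb{C}^n$.

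First I would prove $\mu_i\le\lambda_i$ using the max--min form $\mu_i=\max_{\dim V=i}\min_{0\neq y\in V}(y^*By)/(y^*y)$. Pick an optimal $V\subseteq\mathbb{C}^m$ of dimension $i$. Because $S$ is injective, $S(V)$ is an $i$-dimensional subspace of $\mathbb{C}^n$, and the identity above shows that the minimum Rayleigh quotient of $H$ over $S(V)\setminus\{0\}$ equals $\mu_i$. Applying Courant--Fischer to $H$ with $S(V)$ as a particular test subspace of dimension $i$ then gives $\lambda_i\ge\mu_i$.

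Next I would prove $\mu_i\ge\lambda_{n-m+i}$ by switching to the dual min--max form $\mu_i=\min_{\dim V=m-i+1}\max_{0\neq y\in V}(y^*By)/(y^*y)$. Choose $V$ of dimension $m-i+1$ optimising this expression; then $S(V)$ is an $(m-i+1)$-dimensional subspace of $\mathbb{C}^n$, and the Rayleigh quotient of $H$ there coincides with that of $B$ on $V$. Since Courant--Fischer yields $\lambda_{n-m+i}=\min_{\dim U=m-i+1}\max_{0\neq x\in U}(x^*Hx)/(x^*x)$, taking $U=S(V)$ as a particular competitor gives $\lambda_{n-m+i}\le\mu_i$.

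There is no genuine obstacle, since this is a classical result. The only care needed is to pair each inequality with the correct form of Courant--Fischer (max--min for the upper bound on $\mu_i$, min--max for the lower one) and to keep the dimensions $i$ and $m-i+1$ straight. A cleaner variant of the second step would be to deduce $\mu_i\ge\lambda_{n-m+i}$ from the first inequality applied to $-H$ and $-B$, whose spectra are the negatives of those of $H$ and $B$ in reverse order, trading an extra argument for an index reflection.
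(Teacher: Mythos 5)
Your argument is correct: the identities $\|Sy\|^2=y^*y$ and $(Sy)^*H(Sy)=y^*By$ do make $S$ an isometric embedding carrying Rayleigh quotients of $B$ to those of $H$, and pairing the max--min form of Courant--Fischer with test subspaces $S(V)$ of dimension $i$ (resp.\ the min--max form with dimension $m-i+1$, noting $n-(n-m+i)+1=m-i+1$) yields exactly $\lambda_i\ge\mu_i$ and $\mu_i\ge\lambda_{n-m+i}$; the shortcut via $-H$, $-B$ for the second inequality is also sound. The paper does not prove this lemma itself but quotes it from Bollob\'as and Nikiforov, and your Courant--Fischer argument is the standard proof of that cited result, so there is nothing to reconcile.
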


The following result is immediate from Lemma \ref{lem-f-1}.
\begin{corollary}\label{cor-f-1}
Let $M_G$ be a mixed graph with underlying graph $G$. If $M_H$ is a mixed induced subgraph of $M_G$, then the eigenvalues of
$M_H$ interlace those of $M_G$.
\end{corollary}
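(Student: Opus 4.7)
The plan is to deduce Corollary \ref{cor-f-1} directly from Lemma \ref{lem-f-1} by exhibiting an explicit matrix $S$ of the required form. Let $V(G)=\{v_1,\dots,v_n\}$ and let $U\subseteq V(G)$ be the vertex set of the induced mixed subgraph $M_H$, with $|U|=m$ and $U=\{v_{i_1},\dots,v_{i_m}\}$ for some indices $i_1<\cdots<i_m$. I would define $S$ to be the $n\times m$ matrix whose $k$th column is the standard basis vector $e_{i_k}\in\mathbb{R}^n$. Equivalently, $S$ is the submatrix of the $n\times n$ identity obtained by keeping the columns indexed by $U$.

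Two verifications then finish the argument. First, because the columns of $S$ are distinct standard basis vectors, one checks that $S^*S=I_m$, so $S$ satisfies the hypothesis of Lemma \ref{lem-f-1}. Second, by the definition of the Hermitian matrix $H(M_G)=[h_{st}]$, the $(k,\ell)$ entry of $B=S^*H(M_G)S$ equals $h_{i_k\,i_\ell}$, which by the definition of the induced mixed subgraph is exactly the $(k,\ell)$ entry of $H(M_H)$. Hence $B=H(M_H)$, and Lemma \ref{lem-f-1} yields $\lambda_i(M_G)\ge\lambda_i(M_H)\ge\lambda_{n-m+i}(M_G)$ for every $i=1,\dots,m$, which is the desired interlacing.

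There is no real obstacle here: the only content is recognising that passing to an induced mixed subgraph corresponds to taking a principal submatrix of the Hermitian matrix, after which Lemma \ref{lem-f-1} applies verbatim. The entry-wise check of $S^*H(M_G)S=H(M_H)$ uses only the fact that the signs $1,\pm i,0$ assigned to pairs of vertices depend solely on the local edge/arc relation between those two vertices, which is preserved under taking induced mixed subgraphs.
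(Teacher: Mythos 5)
Your proof is correct and is exactly the standard argument the paper has in mind: the paper simply declares the corollary ``immediate from Lemma \ref{lem-f-1}'', and your choice of $S$ as the column-selection submatrix of the identity, together with the checks $S^*S=I_m$ and $S^*H(M_G)S=H(M_H)$, supplies precisely the omitted details. No gaps.
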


Next we introduce another powerful tool in spectral graph theorem, that is the equitable partition. Let $M_G$ be a mixed
graph on $n$ vertices with underlying graph $G$. Let $\pi$: $V(G)=V_1\cup V_2\cup \cdots\cup V_s$ be a partition of $V(G)$
with $|V_i|=n_i$ and $n=n_1+n_2+\cdots+n_s$. For $1\le i,j\le s$, denote by $H_{i,j}$
the submatrix of $H(M_G)$ whose rows corresponding to $V_i$ and columns corresponding to $V_j$. Therefore, the Hermitian
matrix $H(M_G)$ can be written as
$H(M_G)=[H_{ij}]$. Denote by $b_{ij}=e^TH_{ij}e/n_i$ the average row-sums of $A_{ij}$, where $e$ denotes the all-one vector.
The matrix
$H_{\pi}=(b_{ij})_{s\times s}$ is called the {\it quotient matrix} of $H(M_G)$. If, for any $i,j$, the row-sum of $H_{ij}$
corresponding to any vertex $v\in V_i$
equals to $b_{ij}$, then $\pi$ is called an {\it equitable partition} of $M_G$. Let $\delta_{V_i}$ be a vector indexed by
$V(G)$ such that $\delta_{V_i}(v)=1$ if
$v\in V_i$ and $0$ otherwise. The matrix $P=[\delta_{V_1} \delta_{V_2}\cdots \delta_{V_s}]$ is called the {\it
characteristic matrix} of $\pi$. If $\pi$ is an
equitable partition, then $H(M_G)P=PH_{\pi}$. It leads to the following famous result.

\begin{lemma}[{\cite[Theorem 9.3.3, page 197]{Godsil}}]\label{lem-f-2}
Let $M_G$ be a mixed graph and $\pi$ an equitable partition of $M_G$ with quotient matrix $H_{\pi}$ and characteristic
matrix $P$. Then the eigenvalues of $H_{\pi}$ are also eigenvalues of $H(M_G)$. Furthermore, $H(M_G)$ has the following two
kinds of eigenvectors:
\begin{itemize}
\item[{\rm (i)}]
the eigenvectors in the column space of $P$, and the corresponding eigenvalues coincide with the eigenvalues of $H_{\pi}$;
\item[{\rm (ii)}]
the eigenvectors orthogonal to the columns of $P$, i.e., those eigenvectors sum to zero on each cell of $\pi$.
\end{itemize}
\end{lemma}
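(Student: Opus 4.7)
The plan is to reduce everything to the single matrix identity $H(M_G)P = P H_\pi$, from which both conclusions follow by standard linear algebra. To verify the identity, I would compare the $(v,j)$-entry of both sides for an arbitrary vertex $v\in V_i$ and cell index $j\in\{1,\dots,s\}$. The left-hand entry is $\sum_{u\in V_j}h_{vu}$, which is exactly the row-sum of the block $H_{ij}$ corresponding to the vertex $v$; by the definition of an equitable partition this equals $b_{ij}$. The right-hand entry is $\sum_{k=1}^{s}P_{v,k}(H_\pi)_{k,j}=(H_\pi)_{i,j}=b_{ij}$, since $P_{v,k}=1$ precisely when $k=i$. Hence the two matrices agree.

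For part (i), given any eigenpair $(\mu,x)$ of $H_\pi$ with $x\neq 0$, I would compute
\[
H(M_G)(Px)=\bigl(H(M_G)P\bigr)x=(PH_\pi)x=\mu(Px).
\]
Since the cells $V_1,\ldots,V_s$ are disjoint and nonempty, the indicator columns $\delta_{V_1},\dots,\delta_{V_s}$ are linearly independent, so $P$ has full column rank and $Px\neq 0$. Thus $Px$ is a genuine eigenvector of $H(M_G)$ for the eigenvalue $\mu$, and it clearly lies in the column space of $P$.

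For part (ii), the identity $H(M_G)P=PH_\pi$ says the column space $\operatorname{col}(P)$ is $H(M_G)$-invariant. Because $H(M_G)$ is Hermitian, its invariant subspaces come in orthogonal pairs, so $\operatorname{col}(P)^{\perp}$ is also $H(M_G)$-invariant, and the restriction of $H(M_G)$ to it is Hermitian and hence admits an orthonormal eigenbasis. A vector $y$ lies in $\operatorname{col}(P)^{\perp}$ exactly when $\langle y,\delta_{V_i}\rangle=0$ for every $i$, i.e.\ when $y$ sums to zero on each cell of $\pi$. Combining the two orthogonal families yields a full eigenbasis of $H(M_G)$ split as claimed.

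There is no real obstacle: the whole argument is essentially the computation of the block row-sums in Step 1, followed by the spectral theorem for Hermitian operators applied to the decomposition $\mathbb{C}^n=\operatorname{col}(P)\oplus\operatorname{col}(P)^{\perp}$. The only point that requires mild care is that $H(M_G)$ has complex entries while $P$ and $H_\pi$ are real, but since Hermiticity is preserved under restriction to invariant subspaces, the standard real-symmetric proof carries over verbatim to the Hermitian setting.
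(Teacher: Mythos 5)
Your proof is correct and follows exactly the route the paper itself sets up: it establishes the intertwining identity $H(M_G)P=PH_\pi$ from the definition of an equitable partition (the paper states this identity immediately before citing the lemma from Godsil's book) and then deduces both parts from the invariance of $\operatorname{col}(P)$ together with the Hermiticity of $H(M_G)$. The only point worth noting is that $H_\pi$ itself need not be Hermitian, but your argument never uses that; it only needs that $H_\pi$ represents the restriction of the Hermitian operator $H(M_G)$ to the invariant subspace $\operatorname{col}(P)$, which is what guarantees the full eigenbasis in part (i).
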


Let $\mathcal{H}$ be a set of graphs. A graph $G$ is called $\mathcal{H}$-free if non of graphs in $\mathcal{H}$ can be an
induced subgraph of $G$. Especially, if $\mathcal{H}=\{H\}$ then the $\mathcal{H}$-free graph $G$ is also called an $H$-free
graph. Recall that a $P_4$-free graph is called a cograph. The following result reveals the structure of cographs.
\begin{lemma}[\cite{Seinsche}]\label{lem-f-3}
If $G$ is a connected $P_4$-free graph, then $G$ is the join of two graphs, that is,  $G=G_1\nabla G_2$ for some graphs
$G_1$ and $G_2$ with $|V(G_1)|,|V(G_2)|\ge 1$.
\end{lemma}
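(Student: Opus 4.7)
The plan is to prove the proposition by strong induction on $n=|V(G)|$. The base case $n=2$ is immediate: a connected graph on two vertices is $K_2=K_1\nabla K_1$. For the inductive step I will assume the conclusion for all connected $P_4$-free graphs on fewer than $n\geq 3$ vertices, let $G$ be such a graph on $n$ vertices, and exploit the fact that $P_4$ is self-complementary, so $\overline{G}$ is also $P_4$-free.

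The crux is a key lemma which I would establish first: if $H$ is a $P_4$-free graph and $x\in V(H)$ is a vertex such that $H-x$ has at least two connected components, then either $x$ is universal in $H$ (so $H=K_1\nabla(H-x)$) or $x$ misses some component of $H-x$ entirely (so $H$ is disconnected). The argument is a direct induced-$P_4$ hunt: suppose $x$ has neighbours in two distinct components and is not universal; then some component $C_i$ contains both a neighbour and a non-neighbour of $x$, and walking along a shortest path in $C_i$ between them gives adjacent $w,w'\in C_i$ with $xw\in E(H)$ and $xw'\notin E(H)$. Taking any neighbour $y$ of $x$ in a different component $C_j$, the four-vertex set $\{y,x,w,w'\}$ has exactly the edges $yx$, $xw$, $ww'$ (the non-edges $yw$, $yw'$ follow because $y$ lies in a different component of $H-x$), producing the induced path $y-x-w-w'$ and contradicting $P_4$-freeness.

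Having the key lemma, I would pick an arbitrary vertex $v\in V(G)$ and split on the connectivity of $G-v$. If $G-v$ is disconnected, apply the key lemma to $(G,v)$: since $G$ is connected, $v$ cannot miss a component, so $v$ must be universal in $G$, and $G=K_1\nabla(G-v)$ is the desired join. If instead $G-v$ is connected, apply the inductive hypothesis to $G-v$ (still $P_4$-free, with $n-1\geq 2$ vertices) to obtain $G-v=H_1\nabla H_2$; equivalently, $\overline{G-v}=\overline{G}-v$ is disconnected. Now apply the key lemma to $(\overline{G},v)$: either $\overline{G}$ is disconnected, in which case taking $V(G_1)$ and $V(G_2)$ to be (unions of) components of $\overline{G}$ yields $G=G_1\nabla G_2$ since inter-component non-edges in $\overline{G}$ are full bipartite adjacencies in $G$; or $v$ is universal in $\overline{G}$, making $v$ isolated in $G$, which contradicts connectivity of $G$ as $n\geq 3$.

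The main obstacle is identifying and proving the key lemma cleanly; once that one-shot induced-$P_4$ argument is in place, the induction runs smoothly, and the crucial trick is to handle the "$G-v$ already decomposes as a join" case by passing to the complement and invoking the very same lemma on $\overline{G}$.
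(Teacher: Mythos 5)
The paper does not prove this lemma at all --- it is imported verbatim from Seinsche's 1974 paper as a known structural fact about cographs --- so there is no in-paper argument to compare yours against. Your proof is correct and self-contained, and it is essentially the standard proof of Seinsche's theorem: the key lemma (a vertex whose deletion disconnects a $P_4$-free graph is either universal or misses a whole component, via the induced path $y$--$x$--$w$--$w'$) is sound, and the induction that applies it once to $G$ and once to $\overline{G}$, using self-complementarity of $P_4$, correctly covers both cases. One small phrasing point: in the key lemma you should negate the disjunction precisely, i.e.\ assume ``$x$ is not universal \emph{and} $x$ has a neighbour in every component of $H-x$''; the hypothesis as you literally wrote it (``neighbours in two distinct components and not universal'') does not by itself force some single component to contain both a neighbour and a non-neighbour (e.g.\ $x$ could dominate two components and miss a third), though in that scenario the lemma's conclusion holds anyway. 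With that negation stated cleanly, the argument closes with no gaps; you could cite Seinsche or Corneil--Lerchs--Stewart Burlingham for the same statement, but your proof stands on its own.
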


We determine two types of $\mathcal{H}$-free graphs when $\mathcal{H}$ contains some simple graphs.
\begin{lemma}\label{lem-f-4}
If $G$ is a $\{  {K_{1,2}}, {3K_1}, {K_2\cup K_1}\}$-free graph then ${G}\in\{2K_1,K_n\mid n\ge 1\}$; if $G$ is a
$\{{K_{1,2}},{3K_1},  {K_3}\}$-free graph then ${G}\in\{K_1,K_2,2K_1,2K_2,K_1\cup K_2\}$.
\end{lemma}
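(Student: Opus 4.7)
The plan is to reduce both statements to a single structural fact: a graph is $K_{1,2}$-free (equivalently, has no induced $P_3$) if and only if every connected component is a clique. This is standard---if a component had two non-adjacent vertices, then taking a shortest path between them and reading off its first three vertices yields an induced $P_3 = K_{1,2}$, a contradiction---so I would dispatch it in one line at the start and then write
\[
G \;=\; K_{n_1} \cup K_{n_2} \cup \cdots \cup K_{n_k}.
\]

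Next, I would observe that for such a disjoint union of cliques the independence number equals the number of components $k$. Hence the $3K_1$-free hypothesis (common to both statements) immediately forces $k \le 2$, and both statements reduce to a finite case analysis on $(k,n_1,n_2)$.

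For the first statement, the remaining forbidden subgraph $K_2 \cup K_1$ rules out the case $k = 2$ with $\max(n_1,n_2) \ge 2$: any edge of the larger clique together with one vertex of the other clique is an induced $K_2 \cup K_1$. The surviving possibilities are therefore $k = 1$ (giving $G = K_n$) and $k = 2$ with $n_1 = n_2 = 1$ (giving $G = 2K_1$), matching the claimed list. For the second statement, the $K_3$-free hypothesis forces each $n_i \le 2$, and combined with $k \le 2$ this leaves exactly the five graphs $K_1$, $K_2$, $2K_1$, $K_1 \cup K_2$, $2K_2$.

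I do not anticipate any substantive obstacle; all of the content sits in the opening structural observation, after which both conclusions are immediate from inspecting the finitely many remaining values of $(k,n_1,n_2)$.
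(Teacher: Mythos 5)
Your proposal is correct and follows essentially the same route as the paper: both first observe that a $\{K_{1,2},3K_1\}$-free graph is a disjoint union of at most two cliques, and then finish by a short case analysis against the remaining forbidden subgraph ($K_2\cup K_1$ or $K_3$). Your write-up merely makes explicit the justifications (shortest-path argument for the clique components, independence number for the bound on components) that the paper compresses into ``it is clear that.''
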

\begin{proof}
It is clear that, if a graph $G$ is ${K_{1,2},3K_1}$-free, then it is the union of at most two complete graphs. Thus, we
have $G\in\{2K_1,K_n\mid n\ge 1\}$ if $G$ is additional $K_2\cup K_1$-free, and $G\in\{K_1,K_2,2K_1,2K_2,K_2\cup K_1,K_2\cup
K_2\}$
if $G$ is additional $K_3$-free.
\end{proof}

Guo and Mohar introduced the so called four-way switching to generate switching equivalent graphs \cite{Guo}. A {\it
four-way switching} is the operation of changing a mixed graph $M_G$ into the mixed graph $M'_G$ by choosing an appropriate
diagonal matrix $S$ with $S_{jj}\in\{\pm1,\pm i\}$ and setting $H(M'_G)=S^{-1}H(M_G)S$. Let $G$ be a graph and $X$ an edge
cut such that $G-X=G_1\cup G_2$ and $V_1=V(G_1)$ and $V_2=V(G_2)$. For a mixed graph $M_G=(V,E,A)$, define $X^+=\{(v_1,v_2)\mid\{v_1,v_2\}\in X,v_1\in
V_1,v_2\in V_2\}$ and $X^-=\{(v_2,v_1)\mid\{v_1,v_2\}\in X,v_1\in V_1,v_2\in V_2\}$. The cut $X$ is called a {\it coincident
cut} of the mixed graph $M_G$ if $X^+\subseteq A$ or $X^-\subseteq A$ or $X\subseteq E$, that is, the directions of the edges between $V_1$ and $V_2$ are coincident. If $X$ is a coincident cut of
$M_G$,
then the {\it $X$-switching} of $M_G$ is the mixed graph $M_G[X]=(V,E',A')$ with $E'=E\cup X$ and $A'=A\setminus(X^+\cup
X^-)$.
Note that $M_G[X]=M_G$ if $X\subseteq E$. From four-way switching, the following results are obtained.
\begin{lemma}[\cite{Guo}]\label{lem-f-5}
Let $M_G$ be a mixed graph. If $X$ is a coincident cut of $M_G$, then $M_G$ and $M_G[X]$ are switching equivalent and thus
$\operatorname{Sp}(M_G)=\operatorname{Sp}(M_G[X])$.
\end{lemma}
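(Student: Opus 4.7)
The plan is to exhibit an explicit diagonal matrix $D$ with $D_{vv}\in\{\pm 1,\pm i\}$ satisfying $H(M_G[X])=D\,H(M_G)\,D^{*}$, which is exactly the definition of switching equivalence used in the paper. Since $X$ is a cut with $V=V_1\cup V_2$ and $G-X=G_1\cup G_2$, the matrices $H(M_G)$ and $H(M_G[X])$ agree on the diagonal blocks indexed by $V_1\times V_1$ and $V_2\times V_2$ and differ only on the off-diagonal cross-blocks corresponding to $X$. This block structure suggests taking $D$ constant on each part: $D_{vv}=1$ for $v\in V_1$ and $D_{vv}=c$ for $v\in V_2$, with $|c|=1$ chosen according to the common orientation of the cut edges.

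With such a $D$, a direct calculation gives $(D\,H(M_G)\,D^{*})_{uv}=D_{uu}\,\overline{D_{vv}}\,H(M_G)_{uv}$. The scalar $D_{uu}\,\overline{D_{vv}}$ equals $1$ whenever $u$ and $v$ lie in the same part, so all within-block entries are preserved, which is exactly what is needed. For $u\in V_1$ and $v\in V_2$ the scalar becomes $\bar c$, and one splits into the three cases defining a coincident cut: if $X\subseteq E$ then $M_G[X]=M_G$ and $c=1$ works trivially; if $X^{+}\subseteq A$ then $H(M_G)_{uv}=i$ on the cut and the target value in $H(M_G[X])$ is $1$, so $c=i$ yields $\bar c\cdot i=1$; if $X^{-}\subseteq A$ then $H(M_G)_{uv}=-i$ on the cut and $c=-i$ yields $\bar c\cdot(-i)=1$. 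Hermiticity then handles the conjugate cross-block automatically, so the resulting matrix agrees with $H(M_G[X])$ entry by entry.

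I expect essentially no obstacle once the block decomposition is written down: the whole argument collapses to a short entry-wise verification for the displayed $D$, and the only content is the observation that the coincident-cut hypothesis is exactly the condition that a single common phase factor $\bar c\in\{1,-i,i\}$ simultaneously corrects all cross-block entries. The spectral identity $\operatorname{Sp}(M_G)=\operatorname{Sp}(M_G[X])$ is then immediate, since $D$ is unitary and hence $H(M_G[X])=D\,H(M_G)\,D^{*}$ is unitarily similar to $H(M_G)$.
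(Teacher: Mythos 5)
Your proof is correct and is essentially the intended argument: the paper does not reprove this lemma but attributes it to Guo and Mohar, and your two-block diagonal matrix $D$ with $D_{vv}=1$ on $V_1$ and $D_{vv}=c\in\{1,i,-i\}$ on $V_2$ is precisely the ``appropriate diagonal matrix'' of the four-way switching described just before the lemma. The entrywise verification, including the choices $c=i$ when $X^{+}\subseteq A$ and $c=-i$ when $X^{-}\subseteq A$ (and the harmless scaling of the zero entries for non-adjacent cross pairs), matches the sign conventions of the Hermitian matrix, and unitarity of $D$ gives $\operatorname{Sp}(M_G)=\operatorname{Sp}(M_G[X])$.
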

If $G$ is a forest, then each edge is a cut. Moreover, each edge is a coincident cut of any mixed graph $M_G$. Thus, Lemma
\ref{lem-f-5} implies the following result.
\begin{corollary}[\cite{Guo}]\label{cor-f-2}
If $G$ is a forest, then $\operatorname{Sp}(M_G)=\operatorname{Sp}(G)$ for any mixed graph $M_G\in\mathcal{M}_G$.
\end{corollary}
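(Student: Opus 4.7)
The plan is to reduce the mixed graph $M_G$ to its underlying forest $G$ by repeatedly applying the single-edge coincident cut switching guaranteed by Lemma \ref{lem-f-5}. The core observation is two-fold: (a) in a forest every edge is a bridge, hence by itself constitutes an edge cut of $G$; and (b) any cut consisting of a single edge is trivially a coincident cut of an arbitrary mixed graph $M_G$, because there is only one edge direction to be ``coincident'' with itself (either the single edge is undirected, in which case $X\subseteq E$, or it is oriented one way and $X^+\subseteq A$, or the other way and $X^-\subseteq A$).

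With this in mind, I would process the directed edges of $M_G$ one at a time. Pick any directed edge $e=(u,v)\in A$ and let $X=\{\{u,v\}\}$. By (a), $X$ is a cut of $G$, so $G-X$ splits into two forests $G_1\cup G_2$; by (b), $X$ is coincident in $M_G$. Applying Lemma \ref{lem-f-5}, the switched mixed graph $M_G[X]$ satisfies $\operatorname{Sp}(M_G)=\operatorname{Sp}(M_G[X])$, and by the definition of $M_G[X]$ the edge $\{u,v\}$ is now undirected, while all other edges are unchanged. In other words, we have strictly decreased $|A|$ without altering the underlying graph or the Hermitian spectrum.

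Iterating this procedure at most $|A|$ times yields a mixed graph $M_G'\in\mathcal{M}_G$ with empty directed edge set, i.e.\ $M_G'=G$, and $\operatorname{Sp}(M_G)=\operatorname{Sp}(M_G')=\operatorname{Sp}(G)$. There is essentially no obstacle: the only thing to check carefully is that after each switching step the remaining directed edges are still available to be processed in the same way, which is immediate because the set of directed edges only shrinks and the underlying graph is preserved throughout. Thus Lemma \ref{lem-f-5} can be reapplied at each step, and the corollary follows.
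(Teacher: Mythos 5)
Your proposal is correct and follows exactly the paper's own (very terse) argument: every edge of a forest is a bridge and hence a single-edge cut, any single-edge cut is automatically coincident, and Lemma \ref{lem-f-5} then lets one undirect the arcs one at a time while preserving the spectrum. You have merely spelled out the iteration that the paper leaves implicit, so there is nothing to add.
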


\section{Mixed graphs with $\lambda_n>-\frac{\sqrt{5}+1}{2}$}
In this part, we first investigate the mixed triangles in mixed graphs with underlying graph being complete. Next, we get
all mixed graphs with smallest eigenvalue not less than $-\sqrt{2}$. At last, we completely determine the mixed graphs with
smallest eigenvalue greater than $-\frac{\sqrt{5}+1}{2}\approx -1.618$.

\begin{figure}[htbp]
\begin{center}
\unitlength 3.2mm 
\linethickness{0.4pt}
\ifx\plotpoint\undefined\newsavebox{\plotpoint}\fi 
\begin{picture}(32,13)(0,0)
\thicklines
\put(8,12){\line(-3,-4){3}}
\put(5,8){\line(1,0){6}}
\put(11,8){\line(-3,4){3}}
\put(13,8){\line(3,4){3}}
\put(16,12){\line(3,-4){3}}
\put(16,8){\vector(1,0){.07}}\put(13,8){\line(1,0){6}}
\put(22.5,10){\vector(3,4){.07}}\multiput(21,8)(.03370787,.04494382){89}{\line(0,1){.04494382}}
\put(25.5,10){\vector(3,-4){.07}}\multiput(24,12)(.03370787,-.04494382){89}{\line(0,-1){.04494382}}
\put(21,8){\line(1,0){6}}
\put(2.5,3){\vector(3,4){.07}}\multiput(1,1)(.03370787,.04494382){89}{\line(0,1){.04494382}}

\put(5.5,3){\vector(-3,4){.07}}\multiput(7,1)(-.03370787,.04494382){89}{\line(0,1){.04494382}}
\put(1,1){\line(1,0){6}}
\put(10.5,3){\vector(-3,-4){.07}}\multiput(12,5)(-.03370787,-.04494382){89}{\line(0,-1){.04494382}}
\put(13.5,3){\vector(3,-4){.07}}\multiput(12,5)(.03370787,-.04494382){89}{\line(0,-1){.04494382}}
\put(9,1){\line(1,0){6}}
\put(18.5,3){\vector(3,4){.07}}\multiput(17,1)(.03370787,.04494382){89}{\line(0,1){.04494382}}
\put(21.5,3){\vector(3,-4){.07}}\multiput(20,5)(.03370787,-.04494382){89}{\line(0,-1){.04494382}}
\put(20,1){\vector(-1,0){.07}}\put(23,1){\line(-1,0){6}}
\put(26.5,3){\vector(3,4){.07}}\multiput(25,1)(.03370787,.04494382){89}{\line(0,1){.04494382}}
\put(29.5,3){\vector(3,-4){.07}}\multiput(28,5)(.03370787,-.04494382){89}{\line(0,-1){.04494382}}
\put(28,1){\vector(1,0){.07}}\put(25,1){\line(1,0){6}}
\put(8,7){\makebox(0,0)[cc]{\scriptsize$K_3,-1$}}
\put(16,7){\makebox(0,0)[cc]{\scriptsize$K_3^1,-\sqrt{3}$}}
\put(24,7){\makebox(0,0)[cc]{\scriptsize$K_3^{2,1},-2$}}
\put(4,0){\makebox(0,0)[cc]{\scriptsize$K_3^{2,2},-1$}}
\put(12,0){\makebox(0,0)[cc]{\scriptsize$K_3^{2,3},-1$}}
\put(20,0){\makebox(0,0)[cc]{\scriptsize$K_3^{3,1},-\sqrt{3}$}}
\put(28,0){\makebox(0,0)[cc]{\scriptsize$K_3^{3,2},-\sqrt{3}$}}
\put(8,12){\circle*{.5}}
\put(5,8){\circle*{.5}}
\put(11,8){\circle*{.5}}
\put(16,12){\circle*{.5}}
\put(13,8){\circle*{.5}}
\put(19,8){\circle*{.5}}
\put(24,12){\circle*{.5}}
\put(21,8){\circle*{.5}}
\put(27,8){\circle*{.5}}
\put(4,5){\circle*{.5}}
\put(1,1){\circle*{.5}}
\put(7,1){\circle*{.5}}
\put(9,1){\circle*{.5}}
\put(12,5){\circle*{.5}}
\put(15,1){\circle*{.5}}
\put(20,5){\circle*{.5}}
\put(17,1){\circle*{.5}}
\put(23,1){\circle*{.5}}
\put(25,1){\circle*{.5}}
\put(28,5){\circle*{.5}}
\put(31,1){\circle*{.5}}
\end{picture}
\begin{picture}(48,14)(0,0)
\thicklines
\put(3,12){\vector(1,0){.07}}\put(1,12){\line(1,0){4}}
\put(5,10){\vector(0,-1){.07}}\put(5,12){\line(0,-1){4}}
\put(1,12){\line(0,-1){4}}
\put(1,8){\line(1,0){4}}
\put(5,8){\circle*{0.5}}
\put(1,8){\circle*{0.5}}
\put(1,12){\circle*{0.5}}
\put(5,12){\circle*{0.5}}
\put(10,12){\vector(1,0){.07}}\put(8,12){\line(1,0){4}}
\put(8,12){\line(0,-1){4}}
\put(8,8){\line(1,0){4}}
\put(12,8){\circle*{0.5}}
\put(8,8){\circle*{0.5}}
\put(8,12){\circle*{0.5}}
\put(12,12){\circle*{0.5}}
\put(17,12){\vector(1,0){.07}}\put(15,12){\line(1,0){4}}
\put(19,10){\vector(0,-1){.07}}\put(19,12){\line(0,-1){4}}
\put(15,12){\line(0,-1){4}}
\put(15,8){\line(1,0){4}}
\put(19,8){\circle*{0.5}}
\put(15,8){\circle*{0.5}}
\put(15,12){\circle*{0.5}}
\put(19,12){\circle*{0.5}}
\put(22,12){\line(0,-1){4}}
\put(22,8){\line(1,0){4}}
\put(26,8){\circle*{0.5}}
\put(22,8){\circle*{0.5}}
\put(22,12){\circle*{0.5}}
\put(26,12){\circle*{0.5}}
\put(31,12){\vector(1,0){.07}}\put(29,12){\line(1,0){4}}
\put(33,10){\vector(0,-1){.07}}\put(33,12){\line(0,-1){4}}
\put(29,12){\line(0,-1){4}}
\put(33,8){\circle*{0.5}}
\put(29,8){\circle*{0.5}}
\put(29,12){\circle*{0.5}}
\put(33,12){\circle*{0.5}}
\put(3,5){\vector(1,0){.07}}\put(1,5){\line(1,0){4}}
\put(1,5){\line(0,-1){4}}
\put(1,1){\line(1,0){4}}
\put(5,1){\circle*{0.5}}
\put(1,1){\circle*{0.5}}
\put(1,5){\circle*{0.5}}
\put(5,5){\circle*{0.5}}
\put(12,3){\vector(0,-1){.07}}\put(12,5){\line(0,-1){4}}
\put(8,5){\line(0,-1){4}}
\put(8,1){\line(1,0){4}}
\put(12,1){\circle*{0.5}}
\put(8,1){\circle*{0.5}}
\put(8,5){\circle*{0.5}}
\put(12,5){\circle*{0.5}}
\put(17,5){\vector(1,0){.07}}\put(15,5){\line(1,0){4}}
\put(15,5){\line(0,-1){4}}
\put(15,1){\line(1,0){4}}
\put(19,1){\circle*{0.5}}
\put(15,1){\circle*{0.5}}
\put(15,5){\circle*{0.5}}
\put(19,5){\circle*{0.5}}
\put(24,5){\vector(1,0){.07}}\put(22,5){\line(1,0){4}}
\put(22,5){\line(0,-1){4}}
\put(26,1){\circle*{0.5}}
\put(22,1){\circle*{0.5}}
\put(22,5){\circle*{0.5}}
\put(26,5){\circle*{0.5}}
\put(29,5){\line(0,-1){4}}
\put(29,1){\line(1,0){4}}
\put(33,1){\circle*{0.5}}
\put(29,1){\circle*{0.5}}
\put(29,5){\circle*{0.5}}
\put(33,5){\circle*{0.5}}
\put(38,12){\vector(1,0){.07}}\put(36,12){\line(1,0){4}}
\put(40,10){\vector(0,-1){.07}}\put(40,12){\line(0,-1){4}}
\put(36,12){\line(0,-1){4}}
\put(36,8){\line(1,0){4}}
\put(40,8){\circle*{0.5}}
\put(36,8){\circle*{0.5}}
\put(36,12){\circle*{0.5}}
\put(40,12){\circle*{0.5}}
\put(45,12){\vector(1,0){.07}}\put(43,12){\line(1,0){4}}
\put(43,12){\line(0,-1){4}}
\put(43,8){\line(1,0){4}}
\put(47,8){\circle*{0.5}}
\put(43,8){\circle*{0.5}}
\put(43,12){\circle*{0.5}}
\put(47,12){\circle*{0.5}}
\put(36,5){\line(0,-1){4}}
\put(36,1){\line(1,0){4}}
\put(40,1){\circle*{0.5}}
\put(36,1){\circle*{0.5}}
\put(36,5){\circle*{0.5}}
\put(40,5){\circle*{0.5}}
\put(47,3){\vector(0,-1){.07}}\put(47,5){\line(0,-1){4}}
\put(43,5){\line(0,-1){4}}
\put(43,1){\line(1,0){4}}
\put(47,1){\circle*{0.5}}
\put(43,1){\circle*{0.5}}
\put(43,5){\circle*{0.5}}
\put(47,5){\circle*{0.5}}
\put(12,11){\line(0,1){0}}
\put(12,12){\line(0,-1){4}}
\put(10,8){\vector(-1,0){.07}}\put(12,8){\line(-1,0){4}}
\put(16.5,8){\vector(-1,0){.07}}\put(18,8){\line(-1,0){3}}
\put(26,12){\line(0,-1){4}}
\put(29,9.5){\vector(0,-1){.07}}\put(29,11){\line(0,-1){3}}
\put(31,8){\vector(1,0){.07}}\put(29,8){\line(1,0){4}}
\put(37.5,8){\vector(-1,0){.07}}\put(39,8){\line(-1,0){3}}
\put(36,9.5){\vector(0,1){.07}}\put(36,8){\line(0,1){3}}
\put(47,10){\vector(0,1){.07}}\put(47,8){\line(0,1){4}}
\put(5,3){\vector(0,1){.07}}\put(5,1){\line(0,1){4}}
\put(1,2.5){\vector(0,-1){.07}}\put(1,4){\line(0,-1){3}}
\put(3,1){\vector(-1,0){.07}}\put(5,1){\line(-1,0){4}}
\put(10,5){\vector(-1,0){.07}}\put(12,5){\line(-1,0){4}}
\put(15,1){\line(1,0){2}}
\put(19,5){\line(0,-1){4}}
\put(17,1){\vector(1,0){.07}}\put(15,1){\line(1,0){4}}
\put(44,7){\makebox(0,0)[cc]{\scriptsize$-2$}}
\put(31,0){\makebox(0,0)[cc]{\scriptsize$-1.8477$}}
\put(26,3){\vector(0,-1){.07}}\put(26,5){\line(0,-1){4}}
\put(24,1){\vector(-1,0){.07}}\put(26,1){\line(-1,0){4}}
\put(33,5){\line(0,-1){4}}
\put(31,5){\vector(1,0){.07}}\put(29,5){\line(1,0){4}}
\put(38,5){\vector(1,0){.07}}\put(36,5){\line(1,0){4}}
\put(38,1){\vector(1,0){.07}}\put(36,1){\line(1,0){4}}
\put(43,3){\vector(0,1){.07}}\put(43,1){\line(0,1){4}}
\put(45,5){\vector(-1,0){.07}}\put(47,5){\line(-1,0){4}}
\put(40,5){\line(0,-1){4}}
\put(22,12){\line(1,0){3}}
\put(17,7){\makebox(0,0)[cc]{\scriptsize$C_4^3, -\sqrt{2}$}}
\put(10,7){\makebox(0,0)[cc]{\scriptsize$C_4^2, -\sqrt{2}$}}
\put(3,7){\makebox(0,0)[cc]{\scriptsize$C_4^1, -\sqrt{2}$}}
\put(31,7){\makebox(0,0)[cc]{\scriptsize$-2$}}
\put(17,0){\makebox(0,0)[cc]{\scriptsize$-2$}}
\put(10,0){\makebox(0,0)[cc]{\scriptsize$-2$}}
\put(3,0){\makebox(0,0)[cc]{\scriptsize$-2$}}
\put(38,7){\makebox(0,0)[cc]{\scriptsize$-2$}}
\put(38,0){\makebox(0,0)[cc]{\scriptsize$-1.8477$}}
\put(45,0){\makebox(0,0)[cc]{\scriptsize$-1.8477$}}
\put(24,0){\makebox(0,0)[cc]{\scriptsize$-1.8477$}}
\put(24,7){\makebox(0,0)[cc]{\scriptsize$-2$}}
\put(22,12){\line(1,0){4}}
\end{picture}
\end{center}
\caption{\footnotesize{The mixed triangles and their smallest eigenvalues. }}\label{fig-1}
\end{figure}
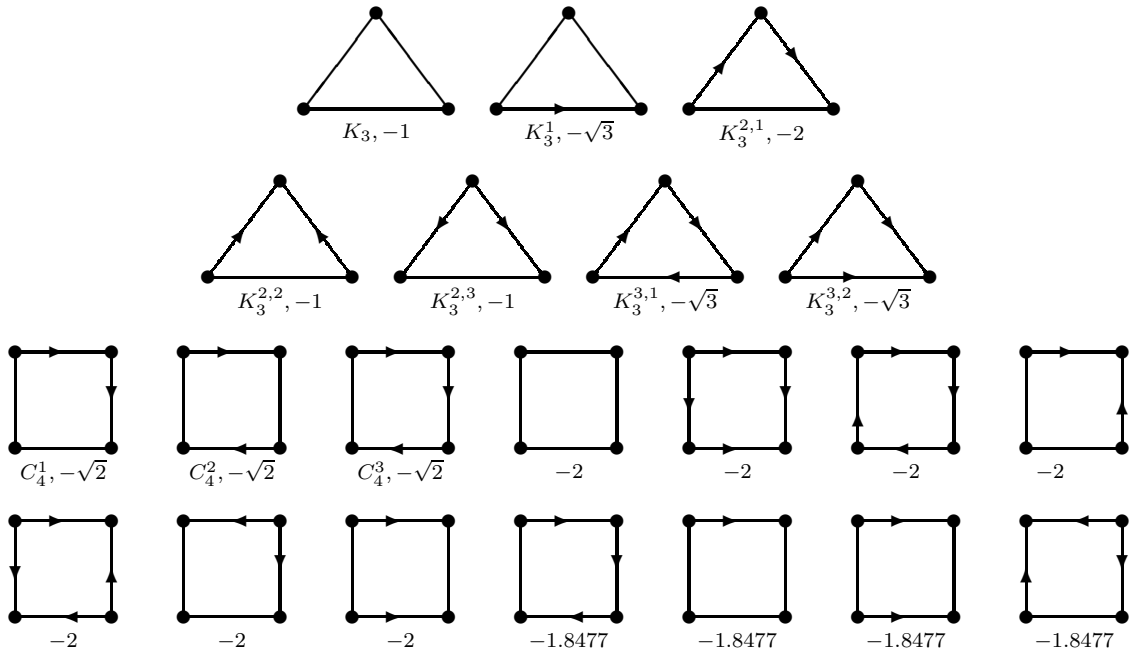

It is easy to verify that there are seven types of mixed triangles and fourteen types of mixed quadrangles, and we present
them in Fig.\ref{fig-1} together with their smallest eigenvalues. The following results are immediate from Lemma
\ref{lem-f-1} and Fig.\ref{fig-1}.
\begin{lemma}\label{lem-f-6}
Let $M_G$ be a mixed graph with smallest eigenvalue $\lambda_n$. If $\lambda_n>-\sqrt{3}$, then
 any mixed triangle in $M_G$ belongs to $\{K_3, K_3^{2,2}, K_3^{2,3}\}$.
\end{lemma}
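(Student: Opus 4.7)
The plan is to invoke the interlacing principle from Corollary~\ref{cor-f-1} and then appeal directly to the classification summarized in Figure~\ref{fig-1}. Concretely, let $M_H$ be any mixed triangle contained in $M_G$ as an induced subgraph. Since the entries of the Hermitian matrix depend only on the type of each edge (undirected or directed in a given orientation), $M_H$ is isomorphic, up to switching equivalence of the triangle itself, to one of the seven mixed triangles enumerated in Figure~\ref{fig-1}. The first step is therefore to note that the list
\[
\{K_3,\ K_3^1,\ K_3^{2,1},\ K_3^{2,2},\ K_3^{2,3},\ K_3^{3,1},\ K_3^{3,2}\}
\]
is exhaustive: every mixed triangle is switching-equivalent to one of these, and switching preserves the Hermitian spectrum by Lemma~\ref{lem-f-5}.

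The second step is to apply Corollary~\ref{cor-f-1} to the induced mixed subgraph $M_H\subseteq M_G$. Interlacing gives
\[
\lambda_{\min}(M_H)\ \ge\ \lambda_n(M_G)\ >\ -\sqrt{3}.
\]
Hence any mixed triangle occurring in $M_G$ must have its own smallest Hermitian eigenvalue strictly greater than $-\sqrt{3}$.

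The final step is simply to read off Figure~\ref{fig-1}. Among the seven types, the smallest eigenvalues are $-1$ for $K_3$, $K_3^{2,2}$, $K_3^{2,3}$; $-\sqrt{3}$ for $K_3^1$, $K_3^{3,1}$, $K_3^{3,2}$; and $-2$ for $K_3^{2,1}$. Only the first three satisfy $\lambda_{\min}>-\sqrt{3}$, so $M_H\in\{K_3,K_3^{2,2},K_3^{2,3}\}$ as claimed.

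There is essentially no obstacle once Figure~\ref{fig-1} is taken as given; the only mild point is the implicit claim that the figure really does cover all mixed triangles up to switching equivalence. This can be verified by a short direct enumeration: a mixed triangle has three edges each of which is either undirected or an arc in one of two directions, and modulo the four-way switching of Lemma~\ref{lem-f-5} together with relabeling of vertices, the $3^3=27$ raw possibilities collapse to the seven representatives shown. Since this classification is standard background, the proof reduces to citing the figure and applying interlacing.
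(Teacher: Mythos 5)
Your proposal is correct and takes essentially the same route as the paper, which states Lemma~\ref{lem-f-6} as immediate from interlacing (Lemma~\ref{lem-f-1}/Corollary~\ref{cor-f-1}) and the list of seven mixed triangles with their smallest eigenvalues in Fig.~\ref{fig-1}. One small caveat in your closing remark: the seven representatives are the isomorphism classes of the $27$ configurations under vertex relabeling \emph{alone} --- if you additionally quotient by switching they collapse to only three classes (with smallest eigenvalues $-1$, $-\sqrt{3}$, $-2$), and since the lemma asserts that the triangle itself, not merely its switching class, lies in $\{K_3,K_3^{2,2},K_3^{2,3}\}$, the exhaustiveness of Fig.~\ref{fig-1} should be justified up to relabeling rather than ``modulo four-way switching together with relabeling.''
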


\begin{lemma}\label{lem-x-2}
Let $M_G$ be a mixed graph with smallest eigenvalue $\lambda_n$. If $\lambda_n\ge-1.84$, then
 any induced mixed quadrangle in $M_G$ belongs to $\{ C_4^1,C_4^2,C_4^3\}$.
\end{lemma}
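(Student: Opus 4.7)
The plan is to deduce the result as an immediate consequence of eigenvalue interlacing (Corollary \ref{cor-f-1}) together with the exhaustive classification of mixed quadrangles displayed in Fig.\ref{fig-1}.

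First I would note that Fig.\ref{fig-1} lists, up to switching equivalence, all $14$ types of mixed quadrangles together with their smallest eigenvalues. Inspecting these values, the smallest eigenvalue of any mixed quadrangle lies in $\{-\sqrt{2},\,-1.8477\ldots,\,-2\}$, and only the three quadrangles $C_4^1, C_4^2, C_4^3$ achieve the value $-\sqrt{2}$; all remaining eleven quadrangles have smallest eigenvalue $\le -1.8477 < -1.84$.

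Now let $M_G$ be a mixed graph with $\lambda_n(M_G)\ge -1.84$ and suppose $C$ is an induced mixed quadrangle of $M_G$. By Corollary \ref{cor-f-1}, the smallest eigenvalue of $C$ (viewed as a mixed induced subgraph) interlaces that of $M_G$, so $\lambda_{\min}(C)\ge \lambda_n(M_G)\ge -1.84$. Combined with the bound $-1.8477$ for every mixed quadrangle outside $\{C_4^1,C_4^2,C_4^3\}$, this forces $C\in\{C_4^1,C_4^2,C_4^3\}$, as required.

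The only real content in the argument is the enumeration underlying Fig.\ref{fig-1}, since once the eigenvalues of the $14$ quadrangles are known, the interlacing step is immediate; this is the step I would expect to dominate any careful write-up, though the paper already treats it as given by the figure. A minor pitfall is confirming that $-1.8477 < -1.84$ (it is, since $-1.8477$ is more negative), so that the threshold $-1.84$ truly excludes the eleven quadrangles with smallest eigenvalue $-1.8477$ or $-2$ and keeps only those with smallest eigenvalue $-\sqrt{2}\approx -1.414$.
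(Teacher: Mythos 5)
Your proposal is correct and follows exactly the route the paper intends: the paper declares the lemma ``immediate from Lemma \ref{lem-f-1} and Fig.\ref{fig-1}'', which is precisely the interlacing-plus-enumeration argument you spell out, including the key numerical point that $-1.8477<-1.84$ excludes all quadrangles other than $C_4^1,C_4^2,C_4^3$. No differences worth noting.
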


In what follows, we always denote $\mathcal{C}_3=\{K_3,K_3^{2,2}, K_3^{2,3}\}$ and $\mathcal{C}_4=\{ C_4^1,C_4^2,C_4^3\}$.
The mixed triangles $K_3$, $K_3^{2,2}$ and $K_3^{2,3}$ play an important role in determining the orientations of a mixed
graph, especially when all induced cycles (if exist) of the underlying graph are triangles.
Recall that a {\it chordal graph} is one in which all cycles of four or more vertices have a chord, which is an edge that is not part of the cycle but connects two vertices of the cycle. The following result characterizes a class of mixed chordal graphs switching equivalent to their underlying graphs.

\begin{theorem}\label{lem-x-1}
Let $G$ be a chordal graph. If $M_G$ is a mixed graph in which each mixed triangle belongs to $\mathcal{C}_3$, then $M_G\in[G]$, i.e., $M_G$ is switching equivalent to $G$.
\end{theorem}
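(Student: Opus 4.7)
The plan is induction on $n=|V(G)|$ via a perfect elimination ordering of the chordal graph $G$, together with a phase-product characterization of the triangles in $\mathcal{C}_3$. The case $n\le 1$ is trivial, so let $n\ge 2$ and (without loss of generality, treating components separately) assume $G$ is connected. As $G$ is chordal and connected, it has a simplicial vertex $v$, meaning $N_G(v)$ induces a clique in $G$. Note that $G-v$ is still chordal and every mixed triangle of $M_G-v$ lies in $\mathcal{C}_3$, so the inductive hypothesis yields a diagonal unitary matrix $D_0$ with entries in $\{\pm1,\pm i\}$ such that $D_0H(M_G-v)D_0^*=H(G-v)$. Extend $D_0$ to $V(G)$ by setting the $v$-entry to $1$ and apply the corresponding four-way switching (Lemma~\ref{lem-f-5}): the result is a mixed graph $M'_G\in[M_G]$ in which every edge not incident to $v$ is undirected, while the $\mathcal{C}_3$-triangle condition is preserved (switching-invariant under Lemma~\ref{lem-f-5}). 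Write $N_G(v)=\{u_1,\ldots,u_k\}$ and $\alpha_i=h_{vu_i}\in\{1,i,-i\}$ in $M'_G$.

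The crucial observation is a phase-product criterion for mixed triangles: for a mixed triangle with unit-modulus off-diagonal entries $h_{12},h_{23},h_{13}$, direct expansion of the determinant gives the characteristic polynomial $\lambda^3-3\lambda-2\operatorname{Re}(h_{12}h_{23}\overline{h_{13}})$, so its smallest eigenvalue equals $-1$ precisely when $h_{12}h_{23}\overline{h_{13}}=1$. In other words, $\mathcal{C}_3$ is exactly the switching class of $K_3$ among mixed triangles. Applied to each triangle $\{v,u_i,u_j\}$ in $M'_G$—which is indeed a triangle because $N_G(v)$ is a clique—together with $h_{u_iu_j}=1$, this forces $\alpha_i\overline{\alpha_j}=1$, hence $\alpha_1=\cdots=\alpha_k=\alpha$ for some common $\alpha\in\{1,i,-i\}$. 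A final switching $D'$ with $D'_{vv}=\overline{\alpha}\in\{\pm1,\pm i\}$ and $D'_{uu}=1$ for $u\ne v$ replaces each $\alpha_i$ by $1$ while leaving every other edge unchanged, producing $H(G)$ and thus $M_G\in[G]$.

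The main obstacle I anticipate is the phase-product lemma for triangles: the entire argument rests on the identification of $\mathcal{C}_3$ as the mixed triangles whose cyclic phase product equals $1$, equivalently as the switching class of $K_3$. Once that identification is in hand, chordality enters only in a structural way, to supply a simplicial vertex at each inductive step, and the $\mathcal{C}_3$-triangle condition at that simplicial vertex synchronizes all incident arc phases so that a single diagonal switching finishes the induction.
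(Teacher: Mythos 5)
Your proof is correct, and while it shares the paper's inductive skeleton --- induction on $|V(G)|$ through a perfect elimination ordering, peeling off a simplicial vertex $v$ --- the mechanism at each step is genuinely different. The paper's inductive statement is not the theorem itself but the existence of a coincident cut $X$ with $V(G-X)=U\cup W$ such that all edges inside $U$ and inside $W$ are undirected; the inductive step then runs a case analysis on how the arcs at $v$ interact with that cut, using only the combinatorial description of $\mathcal{C}_3$ together with Lemma \ref{lem-f-5}. You instead take the theorem itself as the inductive statement, switch $M_G-v$ all the way to $G-v$, and invoke the algebraic identification of $\mathcal{C}_3$ with the mixed triangles whose cyclic phase product $h_{12}h_{23}\overline{h_{13}}$ equals $1$. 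Your characteristic-polynomial computation is right and agrees with the eigenvalues listed in Fig.~\ref{fig-1}; the same identity $h_{uv}h_{vw}h_{wu}=1$ is in fact derived later in the paper, inside the proof of Theorem \ref{thm-f-1}. Since the phase product is invariant under any diagonal conjugation, the clique condition at the simplicial vertex forces all phases $h_{vu_i}$ to coincide after the inductive switching, and one more diagonal factor finishes. This buys a shorter argument with no case analysis, at the price of front-loading the phase-product lemma; the paper's route stays entirely combinatorial and never leaves the class of mixed graphs.

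Two small points of hygiene, neither of which is a gap. First, your intermediate object $M'_G$ need not be a mixed graph: $h_{vu_i}\overline{D_0(u_i)}$ can equal $-1$, which is not an admissible Hermitian-adjacency entry, so the $\alpha_i$ live in $\{\pm1,\pm i\}$ rather than $\{1,i,-i\}$. This is harmless because your argument never uses that $M'_G$ is a mixed graph --- only that the composite diagonal matrix has entries in $\{\pm1,\pm i\}$ and conjugates $H(M_G)$ to $H(G)$, which is exactly the paper's definition of switching equivalence --- but it is cleaner to phrase the intermediate step at the level of Hermitian matrices. Second, the parenthetical appeal to Lemma \ref{lem-f-5} for the switching-invariance of the $\mathcal{C}_3$ condition is misplaced, since that lemma concerns coincident cuts; the invariance you need follows directly from the invariance of the phase product under diagonal conjugation, which you have already established.
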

\begin{proof}
Without loss of generality, assume that $G$ is connected. According to Lemma \ref{lem-f-5}, it suffices to show that $M_G$ has a coincident cut $X$ such that $V(G-X)=U\cup W$ satisfying that all edges in $U$ and $W$ are undirected. 

We prove the statement by induction on $n=|V(G)|$. The statement holds for $n=3$ clearly. Assume that the statement holds for $n-1$ with $n\ge 4$ and we prove it holds for $n$. It is well-known that a chordal graph has a perfect elimination ordering, which is an ordering of the vertices such that, for each vertex $v$, the vertex $v$ and the neighbors of $v$ that occur after $v$ in the order form a clique. Assume that $\{v_1,v_2,\ldots,v_n\}$ is a perfect ordering. By the inductive hypothesis, $M_G-v_1$ has a coincident cut $X$, say $V(G-v_1)=U\cup W$ such that the edges in $U$ and $W$ are undirected. According to the definition of $X$, we have either all edges between $U$ and $W$ are undirected or they have the same direction. Therefore, we divide two cases to discuss.

{\flushleft\bf Case 1.} All edge between $U$ and $W$ are undirected.

For any $u\in N_U(v_1)$ and $w\in N_W(v_1)$, since $v_1,u,w$ form a clique and $M_G[v_1,u_w]\in\mathcal{C}_3$, we have either $v_1\leftrightarrow u,w$, $v_1\rightarrow u,w$ or $u,w\rightarrow v_1$. If the first case occurs, then there is nothing to prove. If the second one occurs, then, for any $u'\in N_U(v_1)$, we have $v\rightarrow u'$ since $v_1,u,u'$ form a clique and $M_G[v_1,u,u']\in\mathcal{C}_3$. Similarly, we have $v_1\rightarrow w'$ for any $w'\in N_W(v_1)$. Therefore, all edges between $v_1$ and $U\cup W$ form the desired coincident cut. If the last one  occurs, one can similarly verify that all edges between $v_1$ and $U\cup W$ form the desired coincident cut.

{\flushleft\bf Case 2.} All edges between $U$ and $W$ have the same direction, say $u\rightarrow v$ for any $u\in U$ and $w\in W$ with $u\sim v$ in $G$.

For any $u\in N_U(v_1)$ and $w\in N_W(v_1)$, since $v_1,u,w$ form a clique and $M_G[v_1,u,w]\in\mathcal{C}_3$, we have either $v_1\leftrightarrow u$ and $v_1\rightarrow w$, or $u\rightarrow v_1$ and $v_1\rightarrow w$. If the former occurs, then, for any $u'\in N_U(v_1)$, we have $v_1\leftrightarrow u'$ since $v_1,u,u'$ form a clique and $M_G[v_1,u,u']\in\mathcal{C}_3$. Similarly, we have $v_1\rightarrow w'$ for any $w'\in N_W(v_1)$. Therefore, all edges between $\{v_1\}\cup U$ and $W$ form the desired coincident cut. If the latter occurs, one can similarly verify that all edges between $U$ and $\{v_1\}\cup W$ form the desired coincident cut.

The proof is completed.
\end{proof}

\begin{remark}
It is clear that Corollary \ref{cor-f-2} can be regarded as a special case of Theorem \ref{lem-x-1}. The characterization of mixed graphs switching equivalent to their underlying graphs is meaningful in itself. Mohar \cite{Mohar} investigated such problem recently. It is not hard to find another proof of Theorem \ref{lem-x-1} without using the perfect ordering by just analyzing the structure of $M_G$. It is fair to guess that there would be a more general result. 
\end{remark}

For non-negative integers $s,t,n$ with $n=s+t$, denote by $K_n[s,t]$ the mixed graph obtained from $K_s\cup K_t$ by adding
all arcs from the vertices of $K_s$ to those of $K_t$.
It is clear that $K_n[s,t]$ is switching equivalent to $K_n$. In fact, we will show that $[K_n]=\{K[s,t]\mid s,t\ge
0,s+t=n\}$ and give a characterization of graph set $[K_n]$.

\begin{lemma}\label{lem-f-7} Let $M_{K_n}$ be a mixed graph on $n\ge3$ vertices in which any mixed triangle belongs to
$\mathcal{C}_3$. If $M_{K_n}$ contains $K_3^{2,2}$, then $M_{K_n}\in\{K_n[s,t]\mid s\ge2,t\ge
1,s+t=n\}$.
\end{lemma}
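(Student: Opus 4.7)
The plan is the following. Since $M_{K_n}$ contains $K_3^{2,2}$, fix three vertices $a,b,c$ with $b\to a$, $c\to a$, and $b\leftrightarrow c$. Using this triangle as a seed, I will build a partition $V(K_n)=S\cup T$ with $\{b,c\}\subseteq S$ and $a\in T$, such that the edges inside $S$ and inside $T$ are all undirected while the edges between $S$ and $T$ are all arcs from $S$ to $T$. This is precisely what is needed to conclude $M_{K_n}=K_n[|S|,|T|]$, and since $b,c\in S$ and $a\in T$ we automatically have $|S|\ge 2$ and $|T|\ge 1$.

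The decisive step is a local forcing at an arbitrary vertex $v\notin\{a,b,c\}$. Because $b\leftrightarrow c$, the requirement $M_{K_n}[\{v,b,c\}]\in\mathcal{C}_3$ leaves only three possibilities: (a) $v\leftrightarrow b$ and $v\leftrightarrow c$, giving $K_3$; (b) $b\to v$ and $c\to v$, giving $K_3^{2,2}$ with apex $v$; or (c) $v\to b$ and $v\to c$, giving $K_3^{2,3}$ with source $v$. Combining this with the triangles $M_{K_n}[\{v,a,b\}]$ and $M_{K_n}[\{v,a,c\}]$, in each of which the arc into $a$ is already present, a short case check shows that (a) forces $v\to a$, (b) forces $a\leftrightarrow v$, and (c) produces the directed path $v\to b\to a$ which cannot be completed to any element of $\mathcal{C}_3$; hence (c) is impossible. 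Accordingly, I declare $v\in S$ if (a) holds and $v\in T$ if (b) holds.

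The remaining step is to verify three global facts about this partition: (i) any two $u,v\in S$ satisfy $u\leftrightarrow v$; (ii) any two $u,v\in T$ satisfy $u\leftrightarrow v$; (iii) for $u\in S$ and $v\in T$, one has $u\to v$. Each is a one-line triangle check once a suitable witness is chosen: for (i) use $\{u,v,a\}$, which contains $u\to a$ and $v\to a$ and therefore must be $K_3^{2,2}$ with apex $a$, forcing $u\leftrightarrow v$; for (ii) use $\{u,v,b\}$, which contains $b\to u$ and $b\to v$ and therefore must be $K_3^{2,3}$ with source $b$, again forcing $u\leftrightarrow v$; and for (iii) use $\{u,v,a\}$, which contains $u\to a$ and $a\leftrightarrow v$, whose only completion inside $\mathcal{C}_3$ is a $K_3^{2,3}$ with source $u$, giving $u\to v$. (When one of $u,v$ coincides with $a$, $b$ or $c$, the required relation is already part of the definition of $S$ or $T$.)

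The main obstacle is simply keeping the casework tidy: $\mathcal{C}_3$ consists of the three triangle types in which either all edges are undirected, or the single undirected edge sits between two sources pointing at a common apex, or between two targets fed by a common source, and one has to use this structural trichotomy repeatedly. The asymmetry between $K_3^{2,2}$ and $K_3^{2,3}$ is exactly what forces the bipartite orientation from $S$ to $T$; beyond recognising this, the argument is mechanical.
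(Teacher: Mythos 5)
Your proof is correct and follows essentially the same route as the paper: seed the argument with the fixed $K_3^{2,2}$, sort every other vertex into one of two classes by a local triangle check (the paper classifies $x$ by its edge to the apex $a=w$ and then deduces its relation to $b,c$, while you do it in the reverse order, but the resulting partition $S=V_2$, $T=V_1$ and the forcing arguments are the same), and then verify the within-class and cross-class orientations by one triangle per pair. No gaps.
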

\begin{proof}
Assume that $u,v, w\in V(M_{K_n})$ induce a $K_3^{2,2}$ with $u\rightarrow w$,$v\rightarrow w$ and $u\leftrightarrow v$. For
any vertex $x\in V(M_{K_n})\setminus \{u,v,
w\}$ (if exists),
we have either $x\leftrightarrow w$ or $x\rightarrow w$ since otherwise $M_{K_n}[u,w,x]\not\in\mathcal{C}_3$. By noticing
$M_{K_n}[u,x,w],M_{K_n}[v,x,w]\in\mathcal{C}_3$, one can easily verify that $v\rightarrow x$
and $u\rightarrow x$ if $x\leftrightarrow w$, and $x \leftrightarrow v$ and $u \leftrightarrow x$ if $x\rightarrow w$.

Denote by $V_1=\{x\in V(M_{K_n})\mid w\leftrightarrow x\}\cup\{w\}$ and $V_2=\{x\in V(M_{K_n})\mid w \leftarrow x\}$. It is
clear that we have $u,v\in V_2$ and $V=V_1\cup V_2$. For any two vertices $x_1,x_1'\in V_1\setminus\{w\}$, we have
$x_1\leftrightarrow x_1'$ since $x_1,x_1'\leftrightarrow w$ and $M_G[x_1,x_1',w]\in\mathcal{C}_3$. Similarly, we have
$x_2\leftrightarrow x_2'$ for any $x_2,x_2'\in V_2$. Moreover, for any $x_1\in V_1\setminus \{w\}$ and $x_2\in V_2$, we have
$x_2\rightarrow x_1$ since $x_1\leftrightarrow w$, $x_2\rightarrow w$ and $M_G[x_1,x_2,w]\in \mathcal{C}_3$.
Thus, $M_{K_n}= K_n[s,t]$ where $s=|V_2|\ge2$ and $t=|V_1|\ge 1$.
\end{proof}
Similarly, we get the following result.

\begin{lemma}\label{lem-f-8}
Let $M_{K_n}$ be a mixed graph on $n\ge3$ vertices in which any mixed triangular belongs to
$\mathcal{C}_3$. If $M_{K_n}$ contains $K_3^{2,3}$, then $M_{K_n}\in\{K_n[s,t]\mid s\ge1,t\ge
2,s+t=n\}$.
\end{lemma}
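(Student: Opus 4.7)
The plan is to mirror the argument of Lemma \ref{lem-f-7} with the roles of $K_3^{2,2}$ and $K_3^{2,3}$ interchanged, i.e.\ with every arc reversed. I would start by fixing vertices $u,v,w\in V(M_{K_n})$ inducing a copy of $K_3^{2,3}$ with $w\rightarrow u$, $w\rightarrow v$ and $u\leftrightarrow v$, and then, for every remaining vertex $x$, determine how $x$ is connected to $u$, $v$ and $w$ by exploiting the fact that every mixed triangle of $M_{K_n}$ lies in $\mathcal{C}_3=\{K_3,K_3^{2,2},K_3^{2,3}\}$.

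The key local step is a case check on the triangle $M_{K_n}[u,x,w]$. Since it contains the arc $w\rightarrow u$, it cannot be $K_3$, and the only configurations in $\mathcal{C}_3$ compatible with $w\rightarrow u$ are a $K_3^{2,2}$ with sink $u$, forcing $x\rightarrow u$ and $w\leftrightarrow x$, or a $K_3^{2,3}$ with source $w$, forcing $w\rightarrow x$ and $u\leftrightarrow x$. Applying the same analysis to $M_{K_n}[v,x,w]$ yields the analogous dichotomy on the $v$-side, and because exactly one of $w\leftrightarrow x$ or $w\rightarrow x$ can hold, the two dichotomies must agree: either $x\rightarrow u$, $x\rightarrow v$ and $w\leftrightarrow x$, or $w\rightarrow x$, $u\leftrightarrow x$ and $v\leftrightarrow x$.

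Having classified each $x$, I would set $V_1=\{x\in V(M_{K_n})\mid w\leftrightarrow x\}\cup\{w\}$ and $V_2=\{x\in V(M_{K_n})\mid w\rightarrow x\}$, so that $u,v\in V_2$ and $w\in V_1$; hence $|V_1|\ge 1$ and $|V_2|\ge 2$. A routine triangle argument using $w$ together with two vertices from a common part (resp.\ from different parts) then forces every edge inside $V_1$ and inside $V_2$ to be undirected, and every edge between $V_1$ and $V_2$ to be an arc oriented from $V_1$ to $V_2$. This identifies $M_{K_n}$ with $K_n[s,t]$ for $s=|V_1|\ge 1$ and $t=|V_2|\ge 2$, as desired. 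The main obstacle is just the careful enumeration of $\mathcal{C}_3$-triangles consistent with a prescribed arc; no new idea beyond that of Lemma \ref{lem-f-7} is required. Indeed, an alternative and slicker route is to observe that $\mathcal{C}_3$ is closed under arc reversal (with $K_3^{2,3}$ and $K_3^{2,2}$ swapping), so Lemma \ref{lem-f-7} applied to the arc-reversed mixed graph, together with the identity $K_n[s,t]^{\mathrm{rev}}=K_n[t,s]$, yields the claim immediately.
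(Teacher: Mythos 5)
Your main argument is correct and is essentially the paper's own proof: fix a $K_3^{2,3}$ on $u,v,w$, classify each remaining vertex $x$ via the triangles $M_{K_n}[u,x,w]$ and $M_{K_n}[v,x,w]$, and read off the resulting bipartition into a source clique $V_1\ni w$ and a sink clique $V_2\supseteq\{u,v\}$. Your closing observation---that $\mathcal{C}_3$ is closed under arc reversal with $K_3^{2,2}$ and $K_3^{2,3}$ interchanged and that $K_n[s,t]^{\mathrm{rev}}=K_n[t,s]$, so Lemma \ref{lem-f-7} applied to the arc-reversed graph yields the claim immediately---is also valid and is in fact cleaner than the paper, which merely says ``similarly'' and rewrites the symmetric case analysis.
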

\begin{proof}
Assume that $u,v, \omega$ induce a $K_3^{2,3}$ with $u\leftarrow \omega$, $v\leftarrow \omega$ and $u\leftrightarrow v$.
For any vertex $x\in V(M_{K_n})\backslash \{u,v,\omega\}$( if exists),
we have either $x\leftrightarrow \omega$ or $x\leftarrow \omega$ since otherwise $M_{K_n}[u,w,x]\not\in\mathcal{C}_3$.
Note that $M_{K_n}[u,x,w],M_{K_n}[v,x,w]\in\mathcal{C}_3$.
We have $x\rightarrow u$ and $x\rightarrow v$ if $\omega \leftrightarrow x$, and
 $x\leftrightarrow u$ and $ x\leftrightarrow v$ if $\omega\rightarrow x$.
Let $V_3=\{x\in V(M_{K_n})\mid \omega \leftrightarrow x \}\cup
\{\omega\}$ and $V_4=\{x\in V(M_{K_n})\mid \omega\rightarrow x \}$.

Clearly, $V(M_{K_n})=V_3\cup V_4$, $u,v\in V_3$.
Taking $x_3,x_3'\in V_3$ and $x_4,x_4'\in V_4$, we get $x_3\leftrightarrow x_3'$ and $x_4\leftrightarrow x_4'$,
Therefore, $V_3$ and $V_4$ induce an clique, respectively, and $|V_3|\geq1$, $|V_4|\geq 2$.
Moreover, we also have $x_3\rightarrow x_4$ for any $x_3\in V_3, x_4\in V_4$.
Therefore, we get $M_{K_n}= K_n[s,t]$ with $s=|V_3|\geq 1$ and $t=|V_4|\geq2$.
\end{proof}

Lemmas \ref{lem-f-7} and \ref{lem-f-8} yield the following result.
\begin{theorem}\label{thm-f-1}
Let $M_{K_n}$ be a mixed graph with underlying graph $K_n$ and $n\ge 3$. Then any mixed triangle of $M_{K_n}$ belongs to
$\mathcal{C}_3$ if and only if $M_{K_n}\in\{K_{n}[s,t]\mid s,t\ge 0, s+t=n\}$ if and only if
$M_{K_n}\in[K_n]$.
\end{theorem}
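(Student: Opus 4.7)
The statement asserts the equivalence of three properties: (A) every mixed triangle of $M_{K_n}$ lies in $\mathcal{C}_3$; (B) $M_{K_n}=K_n[s,t]$ for some $s,t\ge 0$ with $s+t=n$; and (C) $M_{K_n}\in[K_n]$. The plan is to close the cyclic chain $(A)\Rightarrow(B)\Rightarrow(C)\Rightarrow(A)$, each implication being short once Lemmas \ref{lem-f-7} and \ref{lem-f-8} are in hand.

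For $(A)\Rightarrow(B)$, I would split according to which members of $\mathcal{C}_3$ are actually realized as induced triangles in $M_{K_n}$. If some induced triangle is a $K_3^{2,2}$, then Lemma \ref{lem-f-7} gives $M_{K_n}=K_n[s,t]$ with $s\ge 2,\ t\ge 1$; if some induced triangle is a $K_3^{2,3}$, then Lemma \ref{lem-f-8} gives $M_{K_n}=K_n[s,t]$ with $s\ge 1,\ t\ge 2$. Otherwise every induced triangle is the undirected $K_3$; since $n\ge 3$, any arc of $M_{K_n}$ would sit in at least one triangle, which would then fail to be an undirected $K_3$. Hence in the third case $M_{K_n}$ is arc-free, so $M_{K_n}=K_n=K_n[0,n]$. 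The three cases together sweep out the full parameter range $s,t\ge 0$.

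For $(B)\Rightarrow(C)$, note that the edge cut separating the $K_s$-part from the $K_t$-part of $K_n[s,t]$ consists of arcs all oriented from the $s$-side to the $t$-side; it is therefore a coincident cut in the sense of Section~2, and applying Lemma \ref{lem-f-5} to it turns every such arc into an undirected edge while leaving the intra-part edges unchanged, so the result is $K_n$ and $K_n[s,t]\in[K_n]$. For $(C)\Rightarrow(A)$, switching preserves the spectrum, so $\lambda_n(M_{K_n})=\lambda_n(K_n)=-1$ whenever $M_{K_n}\in[K_n]$, and Corollary \ref{cor-f-1} forces every induced mixed triangle $T$ of $M_{K_n}$ to satisfy $\lambda_n(T)\ge -1$. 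Inspection of Fig.\ref{fig-1} shows that exactly the three triangles in $\mathcal{C}_3$ achieve this bound, the other four having smallest eigenvalue $-\sqrt 3$ or $-2$; so every mixed triangle of $M_{K_n}$ lies in $\mathcal{C}_3$. The only mild bookkeeping, which I expect to be the main (and only) subtle point, is making sure in $(A)\Rightarrow(B)$ that the degenerate parameters $(0,n)$ and $(n,0)$ — not produced by Lemma \ref{lem-f-7} or \ref{lem-f-8} — are correctly absorbed into the arc-free subcase.
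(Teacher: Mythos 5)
Your proposal is correct, and the implications $(A)\Rightarrow(B)$ and $(B)\Rightarrow(C)$ match the paper's argument essentially verbatim: the paper likewise invokes Lemmas \ref{lem-f-7} and \ref{lem-f-8} when a $K_3^{2,2}$ or $K_3^{2,3}$ is present, falls back to the arc-free case $M_{K_n}=K_n$ otherwise, and treats $\{K_n[s,t]\}\subseteq[K_n]$ as clear via the coincident cut between the two parts. Where you genuinely diverge is $(C)\Rightarrow(A)$. The paper argues algebraically: from $DH(M_{K_n})D^*=H(K_n)$ it extracts $d_uh_{uv}\overline{d_v}=1$ for each edge of a triangle and multiplies these to get $h_{uv}h_{vw}h_{wu}=1$, which forces either all three entries to be $1$ or exactly one to be $1$ with the other two in $\{\pm i\}$, i.e.\ membership in $\mathcal{C}_3$. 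You instead use that switching preserves the spectrum, so $\lambda_n(M_{K_n})=\lambda_n(K_n)=-1$, then apply interlacing (Corollary \ref{cor-f-1}) and read off from Fig.\ref{fig-1} that only the triangles in $\mathcal{C}_3$ have smallest eigenvalue at least $-1$ (this is in effect an application of Lemma \ref{lem-f-6}, since $-1>-\sqrt{3}$). Both routes are sound. Your spectral route is shorter and reuses machinery already on the table, but it leans on the tabulated eigenvalues of the seven mixed triangles; the paper's computation is independent of that numerical data and exposes the more structural fact that the product $h_{uv}h_{vw}h_{wu}$ around a triangle is a switching invariant, which is the conceptual reason the equivalence class $[K_n]$ is exactly $\{K_n[s,t]\}$.
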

\begin{proof}
Firstly, assume that any triangle  of $M_{K_n}$
belongs to $\mathcal{C}_3$.
Lemma \ref{lem-f-7} and Lemma \ref{lem-f-8} indicate that $M_{K_n}\in\{K_n[s,t]\mid s\ge 1,t\ge1, s+t=n\}$ when $M_G$
contains $K_3^{2,2}$ or $K_3^{2,3}$. If $M_{K_n}$ contains neither $K_3^{2,2}$ nor $K_3^{2,3}$, then any mixed triangle of
$M_{K_n}$ is $K_3$ and thus $M_{K_n}= K_n=K_{n}[n,0]$.  Conversely, one can easily verify that  any mixed triangle of
$K_n[s,t]$ belongs to $\mathcal{C}_3$.

Next we will show $[K_n]=\{K_n[s,t] \mid s,t\ge0, s+t=n\}$.
It is clear that $\{K_n[s,t]\mid s,t\ge0,s+t=n\}\subseteq [K_n]$. It suffices to show that $[K_n]\subseteq\{K_n[s,t]\mid
s,t\ge0,s+t=n\}$. By the arguments above, it only needs to show that any mixed triangle in $M_{K_n}$ belongs to
$\mathcal{C}_3$ for any $M_{K_n}\in[K_n]$. Assume that  $H(M_{K_n})=[h_{j,k}]$ for a mixed graph $M_{K_n}\in[K_n]$. Since
$M_{K_n}\in[K_n]$, there exists a diagonal matrix $D=diag(d_1,d_2,\ldots,d_n)$ with $d_j\in\{\pm i,\pm 1\}$ such that
$DH(M_{K_n})D^*=H(K_n)$. Therefore, for any
$\{u,v,w\}\subseteq V(M_{K_n})$, we have\[
\begin{pmatrix}d_u&&\\&d_v&\\&&d_w\end{pmatrix}
\begin{pmatrix}0&h_{uv}&\overline{h}_{wu}\\ \overline{h}_{uv}&0&h_{vw}\\h_{wu}&\overline{h}_{vw}&0\end{pmatrix}
\begin{pmatrix}\overline{d_u}&&\\&\overline{d_v}&\\&&\overline{d_w}\end{pmatrix}=
\begin{pmatrix}0&1&1\\1&0&1\\1&1&0\end{pmatrix}.
\]
It leads to $d_uh_{uv}\overline{d_v}=1$, $d_vh_{vw}\overline{d_w}=1$ and $d_wh_{wu}\overline{d_u}=1$. Thus, we have
$h_{uv}h_{vw}h_{wu}=1$. It implies that either exactly one of $h_{uv},h_{vw},h_{wu}$ equal to $1$ or all of them equal to
$1$. If the former case happens, say $h_{uv}=1$, then $\{h_{vw},h_{wu}\}=\{\pm i\}$, which means $M_{K_n}[u,v,w]=
K_{3}^{2,2}$ or $K_3^{2,3}$. If the latter case happens, then $M_{K_n}[u,v,w]= K_{3}$.

The proof is completed.
\end{proof}

Now we give a simple application of Theorem \ref{thm-f-1} as follows.
\begin{theorem}\label{thm-f-2}
Let $M_G$ be a connected mixed graph on $n$ vertices.
Then $\lambda_n(M_G)> -\sqrt{2}$
if and only if $M_{G}\in\{K_{n}[s,t]\mid s,t\ge 0, s+t=n\}$.
\end{theorem}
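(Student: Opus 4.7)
The plan is to prove the two directions of the equivalence separately, with the harder direction relying on Theorem \ref{thm-f-1} together with a clean interlacing argument that pins down the underlying graph.

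For the \emph{if} direction, I would simply note that $K_n[s,t]$ is switching equivalent to $K_n$ (so it lies in $[K_n]$ by the very definition of the notation), hence by Lemma \ref{lem-f-5} it has the same spectrum as $K_n$. Since $\operatorname{Sp}(K_n) = \{n-1, -1, \ldots, -1\}$ for $n \ge 2$, the smallest eigenvalue equals $-1 > -\sqrt{2}$; the case $n = 1$ is trivial.

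For the \emph{only if} direction, assume $\lambda_n(M_G) > -\sqrt{2}$. The plan has three steps. First, since $-\sqrt{2} > -\sqrt{3}$, Lemma \ref{lem-f-6} immediately yields that every mixed triangle of $M_G$ belongs to $\mathcal{C}_3$. Second, I would show that the underlying graph $G$ must be complete. The key observation is that $P_3$ has spectrum $\{\sqrt{2}, 0, -\sqrt{2}\}$, and because $P_3$ is a tree, every mixed graph with underlying graph $P_3$ has smallest eigenvalue exactly $-\sqrt{2}$ by Corollary \ref{cor-f-2}. If $G$ were not complete, then, since $G$ is connected, picking a shortest path between two non-adjacent vertices produces three consecutive vertices inducing a $P_3$ in $G$; the induced mixed subgraph then has smallest eigenvalue $-\sqrt{2}$, and Corollary \ref{cor-f-1} (interlacing) would force $\lambda_n(M_G) \le -\sqrt{2}$, contradicting the hypothesis. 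Hence $G = K_n$. Third, I would finish by invoking Theorem \ref{thm-f-1}: since every mixed triangle of $M_{K_n}$ lies in $\mathcal{C}_3$, we get $M_G \in \{K_n[s,t] \mid s,t \ge 0,\ s+t = n\}$.

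The main potential obstacle is essentially a subtlety rather than a real difficulty: one needs to verify that the forbidden induced subgraph threshold is set by $P_3$ and not by some other small mixed graph with underlying graph that is not complete. The fact that $P_3$ is a tree (hence spectrally insensitive to orientation by Corollary \ref{cor-f-2}) is what makes the interlacing step clean, and the connected-$P_3$-free characterization of complete graphs is what converts the local spectral bound into the global structural statement $G = K_n$. The small cases $n \in \{1,2\}$, where Theorem \ref{thm-f-1} does not formally apply, can be handled by inspection.
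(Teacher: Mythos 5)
Your proposal is correct and follows essentially the same route as the paper: the sufficiency via $K_n[s,t]\in[K_n]$ having spectrum $\{n-1,[-1]^{n-1}\}$, and the necessity via $\operatorname{Sp}(M_{P_3})=\operatorname{Sp}(P_3)$ (Corollary \ref{cor-f-2}) plus interlacing to force $G=K_n$, then Lemma \ref{lem-f-6} and Theorem \ref{thm-f-1} to pin down the orientations. Your added remarks (why a connected non-complete graph contains an induced $P_3$, and the small cases $n\le 2$) only make explicit what the paper leaves implicit.
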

\begin{proof}
Theorem \ref{thm-f-1} implies that the mixed graph $K_n[s,t]$ has the spectrum $\{n-1,[-1]^{n-1}\}$, and the sufficiency
follows. Now we consider the necessity.
Assume that $M_G$ is a mixed graph on $n$ vertices with $\lambda_n(M_G)> -\sqrt{2}$. Since
$\operatorname{Sp}(M_{P_3})=\operatorname{Sp}(P_3)=\{\pm \sqrt{2},0\}$, the path $P_3$ cannot be an induced subgraph of $G$
due to Corollary \ref{cor-f-1}. Thus, we have $G= K_n$. Furthermore, since $\lambda_n(M_G)>-\sqrt{2}>-\sqrt{3}$,  Lemma
\ref{lem-f-6} also implies that each
triangle in $M_G$ belongs to $\mathcal{C}_3$. Thus,  we have $M_{G}\in\{K_{n}[s,t]\mid s,t\ge 0, s+t=n\}$
by Theorem \ref{thm-f-1}.
\end{proof}

Theorem \ref{thm-f-2} gives the characterization of mixed graphs with $\lambda_n>-\sqrt{2}$. In what follows, we will
further determine the mixed graphs with $\lambda_n\geq -\sqrt{2}$.
\begin{lemma}\label{lem-f-9}
Let $M_G$ be a connected mixed graph on $n$ vertices. If $\lambda_n(M_G)\geq -\sqrt{2}$, then $G$ is $\{P_3\nabla
K_1,(K_2\cup
K_1)\nabla K_1\}$-free.
\end{lemma}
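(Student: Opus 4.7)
The plan is to argue by contradiction. Suppose $G$ contains an induced subgraph $H$ isomorphic to one of $P_3\nabla K_1$ or $(K_2\cup K_1)\nabla K_1$, and let $M_H$ denote the mixed subgraph of $M_G$ induced on the corresponding four vertices. By Corollary~\ref{cor-f-1}, interlacing gives $\lambda_n(M_H)\ge\lambda_n(M_G)\ge -\sqrt{2}$, so it suffices to show that no mixed graph with underlying graph $H$ can have smallest eigenvalue $\ge -\sqrt{2}$.

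The first step is to constrain the orientations. Since $-\sqrt{2}>-\sqrt{3}$, Lemma~\ref{lem-f-6} forces every mixed triangle of $M_H$ to lie in $\mathcal{C}_3$. The key observation is that both candidate underlying graphs are chordal: $P_3\nabla K_1=K_4-e$ has its only $4$-cycle broken by the chord joining the two ``central'' vertices, while the paw $(K_2\cup K_1)\nabla K_1$ has just one induced cycle, namely a triangle. Hence Theorem~\ref{lem-x-1} applies and $M_H\in[H]$, giving $\operatorname{Sp}(M_H)=\operatorname{Sp}(H)$.

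It then remains to check that the ordinary adjacency spectra of the two forbidden graphs have smallest eigenvalue strictly below $-\sqrt{2}$. For $P_3\nabla K_1=K_4-e$, the equitable partition separating the two degree-$3$ vertices from the two degree-$2$ vertices has $2\times 2$ quotient with characteristic polynomial $\lambda^2-\lambda-4$, whose smaller root $(1-\sqrt{17})/2\approx -1.56$ is in fact the smallest eigenvalue of $H$. For the paw, a direct computation (or the orbit partition into $\{a,b\},\{c\},\{d\}$) yields characteristic polynomial $\lambda^4-4\lambda^2-2\lambda+1$; evaluating at $\lambda=-\sqrt{2}$ gives $4-8+2\sqrt{2}+1=-3+2\sqrt{2}<0$, and since the polynomial tends to $+\infty$ as $\lambda\to-\infty$, its smallest root must be strictly below $-\sqrt{2}$. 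Either case contradicts $\lambda_n(M_H)\ge-\sqrt{2}$, completing the proof.

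The only nontrivial ingredient is the chordality of $H$, which lets Theorem~\ref{lem-x-1} collapse the a priori long list of switching classes of orientations of $H$ down to the single class $[H]$; without this reduction one would have to enumerate all orientations satisfying the $\mathcal{C}_3$ triangle condition and bound their smallest eigenvalues individually, which is considerably more tedious. Once this reduction is in place, the remaining spectral estimates are routine.
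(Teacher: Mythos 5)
Your proposal is correct and follows essentially the same route as the paper's proof: interlacing via Corollary \ref{cor-f-1}, forcing all mixed triangles into $\mathcal{C}_3$ via Lemma \ref{lem-f-6}, invoking Theorem \ref{lem-x-1} (both graphs being chordal) to reduce to the adjacency spectrum of the underlying graph, and then checking that $\lambda_4(K_4-e)\approx-1.56$ and $\lambda_4$ of the paw $\approx-1.48$ are both below $-\sqrt{2}$. Your sign-change argument for the paw's characteristic polynomial at $-\sqrt{2}$ is a nice self-contained verification of the numerical value the paper simply quotes.
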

\begin{proof}
Suppose to the contrary that $G$ contains induced $H$ for $H\in \{P_3\nabla K_1,(K_2\cup
K_1)\nabla K_1\}$. Therefore, Corollary \ref{cor-f-1} means that $\lambda_4(M_H)\ge-\sqrt{2}>-\sqrt{3}$, and thus each mixed
triangle of $M_H$ belongs to $\mathcal{C}_3$. Note that $H$ has no cycle with length greater than $3$. Theorem \ref{lem-x-1}
implies that $\lambda_4(M_H)=\lambda_4(H)$, which equals to $\lambda_4(P_3\nabla K_1)=-1.56<-\sqrt{2}$ or
$\lambda_4((K_2\cup K_1)\nabla K_1)=-1.48<-\sqrt{2}$, a contradiction.
\end{proof}

By Lemma \ref{lem-f-9}, we get the following result.
\begin{theorem}\label{thm-f-3}
Let $M_G$ be a connected mixed graph on $n\ge 4$ vertices.
Then $\lambda_n(M_G)\geq -\sqrt{2}$ if and only if $M_{K_n}\in\{K_{n}[s,t]\mid s,t\ge 0, s+t=n\}\cup\mathcal{C}_4$.
\end{theorem}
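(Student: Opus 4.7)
The plan is to treat the sufficiency by reading off the smallest eigenvalues of the listed graphs, and the necessity by restricting the underlying graph $G$ via forbidden induced subgraphs, then showing $G\in\{K_n, C_4\}$, and finally pinning down the orientation in each case. Sufficiency is immediate: Theorem \ref{thm-f-1} gives $\operatorname{Sp}(K_n[s,t])=\{n-1,[-1]^{n-1}\}$, so $\lambda_n=-1>-\sqrt{2}$, while the three members of $\mathcal{C}_4$ all have $\lambda_n=-\sqrt{2}$ by Fig.~\ref{fig-1}.

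For necessity, assume $\lambda_n(M_G)\geq-\sqrt{2}$. Corollary \ref{cor-f-1} propagates this bound to every induced mixed subgraph. Applying this to forest subgraphs via Corollary \ref{cor-f-2}, and using that $\lambda_n(P_4)=-\frac{\sqrt{5}+1}{2}$ and $\lambda_n(K_{1,3})=-\sqrt{3}$ both lie below $-\sqrt{2}$, this forces $G$ to be $\{P_4,K_{1,3}\}$-free. Lemma \ref{lem-f-9} additionally forbids the diamond $P_3\nabla K_1$ and the paw $(K_2\cup K_1)\nabla K_1$ as induced subgraphs of $G$.

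I then split on whether $G$ contains a triangle. Suppose $G$ contains a triangle $T=\{a,b,c\}$. For any external vertex $v$, the possibilities $|N(v)\cap T|=1$ and $|N(v)\cap T|=2$ immediately produce a paw or a diamond on $\{v,a,b,c\}$; the possibility $|N(v)\cap T|=0$ is excluded as follows: $P_4$-freeness forces $G$ to have diameter at most $2$, hence there is an intermediate vertex $u$ adjacent to both $v$ and some element of $T$, whereupon paw- and diamond-freeness on $\{u\}\cup T$ force $u$ to be adjacent to all of $T$, and then $\{u,v,a,b\}$ itself forms a paw, a contradiction. So every external vertex is adjacent to all of $T$, and diamond-freeness on $V\setminus T$ forces $V\setminus T$ to be a clique, giving $G=K_n$. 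In that case $\lambda_n(M_G)\geq-\sqrt{2}>-\sqrt{3}$, so Lemma \ref{lem-f-6} places every mixed triangle of $M_G$ in $\mathcal{C}_3$, and Theorem \ref{thm-f-1} yields $M_G\in\{K_n[s,t]\mid s,t\geq0,\,s+t=n\}$. If instead $G$ is triangle-free, Lemma \ref{lem-f-3} gives a cograph decomposition $G=G_1\nabla G_2$; any edge inside either part together with a vertex of the other part would create a triangle, so both parts are edgeless and $G=K_{|G_1|,|G_2|}$. Then $K_{1,3}$-freeness forces $\max(|G_1|,|G_2|)\leq 2$, and $n\geq 4$ forces $|G_1|=|G_2|=2$, so $G=C_4$; since $-\sqrt{2}>-1.84$, Lemma \ref{lem-x-2} then places $M_G$ in $\mathcal{C}_4$.

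The main point requiring care is the triangle case, and specifically the elimination of the possibility that an external vertex has no neighbor in $T$: one must convert $P_4$-freeness into the diameter-$2$ bound and then iterate the paw/diamond analysis on the intermediate vertex. The remainder follows cleanly from the previously established lemmas together with Theorem \ref{thm-f-1}.
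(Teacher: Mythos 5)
Your proof is correct, but the combinatorial middle of the necessity argument is organized differently from the paper's. The paper splits on whether $G$ is $P_3$-free: the $P_3$-free case gives $G=K_n$ at once, and all the work goes into the non-$P_3$-free case, where an induced path $uvw$ is fixed and every other vertex is shown (via $P_4$-, $K_{1,3}$-, $P_3\nabla K_1$- and $(K_2\cup K_1)\nabla K_1$-freeness) to be adjacent to exactly $u$ and $w$, forcing $G=C_4$. You instead split on whether $G$ contains a triangle: in the triangle case you do the analogous external-vertex analysis around the triangle (including the correct observation that $P_4$-freeness bounds the diameter by $2$, which disposes of vertices with no neighbour in the triangle) to force $G=K_n$, and in the triangle-free case you invoke Seinsche's join decomposition (Lemma \ref{lem-f-3}) to reduce to a complete bipartite graph and then to $C_4$. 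The two routes are of comparable length and use the same forbidden-subgraph toolkit and the same endgame (Lemma \ref{lem-f-6} with Theorem \ref{thm-f-1} for $K_n$, Lemma \ref{lem-x-2} for $C_4$); the paper's version avoids appealing to Lemma \ref{lem-f-3}, while yours makes the $K_n$-versus-$C_4$ dichotomy emerge more symmetrically from the presence or absence of a triangle. One small stylistic point: your sufficiency direction is spelled out slightly more fully than the paper's one-line ``immediate,'' which is harmless.
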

\begin{proof}
The sufficiency is immediate and we show the necessity in what follows. We divide two cases to discuss.

{\bf Case 1.} $G$ is $P_3$-free.

In this case, we have $G= K_n$. Since $\lambda_n(M_G)\geq -\sqrt{2}>-\sqrt{3}$, any mixed triangle in $M_G$ belongs to
$\mathcal{C}_3$ by Lemma \ref{lem-f-6}. Thus, Theorem \ref{thm-f-1} means $M_{G}\in\{K_{n}[s,t]\mid s,t\ge 0,
s+t=n\}$.

{\bf Case 2.} $G$ is not $P_3$-free.

In this case, suppose that there exists $u,v,w\in V(G)$ such that $G[u,v,w]={P_3}$ with $u\sim v$ and $v\sim w$. Note that
$\lambda_4(P_4)\approx-1.618<-\sqrt{2}$ and $\lambda_4(K_{1,3})=-\sqrt{3}<-\sqrt{2}$. Corollary \ref{cor-f-1} implies that
$G$ is $\{P_4,K_{1,3}\}$-free, and thus the diameter $d(G)=2$. Therefore, each vertex $y\in V(G)\setminus\{u,v,w\}$ of
$V(G)$ is adjacent to at least one vertex of $\{u,v,w\}$. If $y$ is adjacent to exactly one vertex of $\{u,v,w\}$, then $G$
either contains an induced $P_4$ or $K_{1,3}$, which is impossible. If $y$ is adjacent to all the vertices $\{u,v,w\}$, then
$G[u,v,w,y]=P_3\bigtriangledown K_1$, which contradicts Lemma \ref{lem-f-9}. Thus, $y$ is adjacent to exactly two vertices
of $\{u,v,w\}$. If $y\sim u,v$ or $y\sim v,w$, then $G[u,v,w,y]=(K_2\cup K_1)\nabla K_1$, which contradicts Lemma
\ref{lem-f-9}. Thus, $y\sim u, w$, that is $G[u,v,w,y]=C_4$. Next, we claim that $n=4$. Otherwise, there exists another
vertex $y'\in V(G)\setminus\{u,v,w,y\}$. By regarding $y'$ as $y$, we have $G[u,v,w,y']=C_4$. Therefore, we have
$G[u,v,y,y']=K_{1,3}$ when $y\nsim y'$ and $G[u,v,y,y']=(K_2\cup K_1)\bigtriangledown K_1$ when $y\sim y'$, which
are all impossible. Therefore, we have $G=C_4$, and thus $M_G\in\mathcal{C}_4$ by Fig.\ref{fig-1}.

This completes the proof.
\end{proof}

In what follows, we characterize the mixed graph $M_G$ with $\lambda_n(M_G)> -\frac{1+\sqrt{5}}{2}$.
We first determine the underlying graph of $G$.

\begin{lemma}\label{lem-f-10}
If $M_{G}$ be a mixed graph with underling graph $G=K_{m,n}$,
then $\lambda_n(M_{G})\leq -\frac{1+\sqrt{5}}{2}$ except for $G= K_{2},K_{1,2}$ or $K_{2,2}$.
\end{lemma}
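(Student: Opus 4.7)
The plan is to exhibit an induced $K_{1,3}$ inside $K_{m,n}$ in every non-exceptional case and then invoke Corollary~\ref{cor-f-2} together with the interlacing Corollary~\ref{cor-f-1}. Up to swapping the two parts of the bipartition I may assume $m\le n$; the excluded graphs $K_2=K_{1,1}$, $K_{1,2}$ and $K_{2,2}$ correspond precisely to $(m,n)\in\{(1,1),(1,2),(2,2)\}$, so in every remaining case we have $n\ge 3$.

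Pick any vertex $u$ on the side of size $m$ and any three vertices $v_1,v_2,v_3$ on the opposite side. Since $v_1,v_2,v_3$ lie on the same side of the bipartition, they are pairwise non-adjacent in $K_{m,n}$, while each is joined to $u$; hence the subgraph of $K_{m,n}$ induced by $\{u,v_1,v_2,v_3\}$ is exactly $K_{1,3}$. Because $K_{1,3}$ is a tree, Corollary~\ref{cor-f-2} guarantees that the induced mixed subgraph $M_G[\{u,v_1,v_2,v_3\}]$ has the same spectrum as the underlying graph $K_{1,3}$, namely $\{\sqrt{3},0,0,-\sqrt{3}\}$. Applying interlacing (Corollary~\ref{cor-f-1}) then yields
\[
\lambda_n(M_G)\ \le\ \lambda_4\bigl(M_G[\{u,v_1,v_2,v_3\}]\bigr)\ =\ -\sqrt{3}\ <\ -\tfrac{\sqrt{5}+1}{2},
\]
where the last inequality is just $\sqrt{3}\approx 1.732 > 1.618\approx \tfrac{\sqrt{5}+1}{2}$.

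There is no serious obstacle to this approach; the one thing worth flagging is \emph{why} $K_{1,3}$, rather than a $P_4$ (whose smallest eigenvalue is exactly $-\tfrac{\sqrt{5}+1}{2}$), is the right witness. The reason is that every complete bipartite graph is $P_4$-free, so an induced $P_4$ is simply not available. The three excluded graphs $K_2$, $K_{1,2}$ and $K_{2,2}$ are precisely the complete bipartite graphs that are too small even to contain an induced $K_{1,3}$, which is why they must be singled out in the statement.
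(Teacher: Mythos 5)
Your proof is correct and follows essentially the same route as the paper: both arguments observe that any $K_{m,n}$ other than $K_2$, $K_{1,2}$, $K_{2,2}$ contains an induced $K_{1,3}$, whose mixed versions all have smallest eigenvalue $-\sqrt{3}<-\frac{1+\sqrt{5}}{2}$ by Corollary~\ref{cor-f-2}, and then conclude by interlacing. Your write-up is in fact slightly more careful than the paper's (which states this contrapositively and even miswrites $\lambda_3(K_{1,3})$ for $\lambda_4(K_{1,3})$), but there is no substantive difference in approach.
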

\begin{proof}
Assume $\lambda_n(M_{G})> -\frac{1+\sqrt{5}}{2}$, then
$G$ has no $K_{1,3}$ as an induced subgraph since $\lambda_3(K_{1,3})= -\sqrt{3}<-\frac{1+\sqrt{5}}{2}\approx-1.618$.
This leads to $G= K_{2},K_{1,2}$ or $K_{2,2}$. It follows the result.
\end{proof}

\begin{figure}[htbp]
\begin{center}
\unitlength 3.5mm 
\linethickness{0.4pt}
\ifx\plotpoint\undefined\newsavebox{\plotpoint}\fi 
\begin{picture}(35,28)(0,0)
\thicklines
\put(1,27){\line(1,0){4}}
\put(5,27){\line(1,0){2}}
\put(7,27){\line(0,-1){6}}
\put(7,21){\line(-1,0){6}}
\put(1,21){\line(0,1){6}}
\multiput(1,27)(.03370787,-.03370787){89}{\line(0,-1){.03370787}}
\multiput(4,24)(.03370787,.03370787){89}{\line(0,1){.03370787}}
\multiput(4,24)(-.03370787,-.03370787){89}{\line(0,-1){.03370787}}
\multiput(4,24)(.03370787,-.03370787){89}{\line(0,-1){.03370787}}
\put(1,27.5){\makebox(0,0)[cc]{\tiny$u_1$}}
\put(7,27.5){\makebox(0,0)[cc]{\tiny$u_2$}}
\put(1,20.5){\makebox(0,0)[cc]{\tiny$u_4$}}
\put(4,25){\makebox(0,0)[cc]{\tiny$v$}}
\put(13,27){\vector(1,0){.07}}\put(10,27){\line(1,0){6}}
\put(16,24){\vector(0,-1){.07}}\put(16,27){\line(0,-1){6}}
\put(13,21){\vector(-1,0){.07}}\put(16,21){\line(-1,0){6}}
\put(10,24){\vector(0,-1){.07}}\put(10,27){\line(0,-1){6}}
\multiput(9.93,26.93)(.6,-.6){11}{{\rule{.8pt}{.8pt}}}
\multiput(9.93,20.93)(.6,.6){11}{{\rule{.8pt}{.8pt}}}
\put(10,27.5){\makebox(0,0)[cc]{\tiny$u_1$}}
\put(16,25.5){\makebox(0,0)[cc]{\tiny$u_2$}}
\put(16,20.5){\makebox(0,0)[cc]{\tiny$u_3$}}
\put(10,20.5){\makebox(0,0)[cc]{\tiny$u_4$}}
\put(22,27){\vector(1,0){.07}}\put(19,27){\line(1,0){6}}
\put(25,24){\vector(0,-1){.07}}\put(25,27){\line(0,-1){6}}

\put(19,27){\line(0,-1){6}}
\put(19,21){\line(1,0){6}}
\multiput(18.93,26.93)(.6,-.6){11}{{\rule{.8pt}{.8pt}}}
\multiput(18.93,20.93)(.6,.6){11}{{\rule{.8pt}{.8pt}}}
\put(19,27.5){\makebox(0,0)[cc]{\tiny$u_1$}}
\put(25,27.5){\makebox(0,0)[cc]{\tiny$u_2$}}
\put(25,20.5){\makebox(0,0)[cc]{\tiny$u_3$}}
\put(19,20.5){\makebox(0,0)[cc]{\tiny$u_4$}}
\put(13,25){\makebox(0,0)[cc]{\tiny$v$}}
\put(22,25){\makebox(0,0)[cc]{\tiny$v$}}
\put(31,27){\vector(1,0){.07}}\put(28,27){\line(1,0){6}}
\put(31,21){\vector(-1,0){.07}}\put(34,21){\line(-1,0){6}}
\put(28,27){\line(0,-1){6}}
\put(34,27){\line(0,-1){6}}
\multiput(27.93,26.93)(.6,-.6){11}{{\rule{.8pt}{.8pt}}}
\multiput(33.93,20.93)(0,0){3}{{\rule{.8pt}{.8pt}}}
\multiput(33.93,26.93)(-.6,-.6){11}{{\rule{.8pt}{.8pt}}}
\put(28,27.5){\makebox(0,0)[cc]{\tiny$u_1$}}
\put(34,27.5){\makebox(0,0)[cc]{\tiny$u_2$}}
\put(34,20.5){\makebox(0,0)[cc]{\tiny$u_3$}}
\put(28,20.5){\makebox(0,0)[cc]{\tiny$u_4$}}
\put(31,25){\makebox(0,0)[cc]{\tiny$v$}}
\put(1,27){\circle*{.5}}
\put(7,27){\circle*{.5}}
\put(4,24){\circle*{.5}}
\put(1,21){\circle*{.5}}
\put(7,21){\circle*{.5}}
\put(10,27){\circle*{.5}}
\put(16,27){\circle*{.5}}
\put(13,24){\circle*{.5}}
\put(10,21){\circle*{.5}}
\put(16,21){\circle*{.5}}
\put(19,27){\circle*{.5}}
\put(19,21){\circle*{.5}}
\put(25,21){\circle*{.5}}
\put(28,27){\circle*{.5}}
\put(34,27){\circle*{.5}}
\put(28,21){\circle*{.5}}
\put(34,21){\circle*{.5}}
\put(22,24){\circle*{.5}}
\put(31,24){\circle*{.5}}
\put(7,20){\makebox(0,0)[cc]{\tiny$u_3$}}
\put(5,17){\vector(1,0){.07}}\put(2,17){\line(1,0){6}}
\put(8,14){\vector(0,-1){.07}}\put(8,17){\line(0,-1){6}}
\put(5,11){\vector(-1,0){.07}}\put(8,11){\line(-1,0){6}}
\put(2,14){\vector(0,-1){.07}}\put(2,17){\line(0,-1){6}}
\multiput(1.93,16.93)(.6,-.6){11}{{\rule{.8pt}{.8pt}}}
\multiput(1.93,10.93)(.6,.6){11}{{\rule{.8pt}{.8pt}}}
\put(2,17.5){\makebox(0,0)[cc]{\tiny$u_1$}}
\put(8,17.5){\makebox(0,0)[cc]{\tiny$u_2$}}
\put(8,10.5){\makebox(0,0)[cc]{\tiny$u_3$}}
\put(2,10.5){\makebox(0,0)[cc]{\tiny$u_4$}}
\put(5,15){\makebox(0,0)[cc]{\tiny$v$}}
\put(2,17){\circle*{.5}}
\put(8,17){\circle*{.5}}
\put(5,14){\circle*{.5}}
\put(2,11){\circle*{.5}}
\put(8,11){\circle*{.5}}
\put(17,17){\vector(1,0){.07}}\put(14,17){\line(1,0){6}}
\put(20,14){\vector(0,-1){.07}}\put(20,17){\line(0,-1){6}}
\put(17,11){\vector(-1,0){.07}}\put(20,11){\line(-1,0){6}}
\put(14,14){\vector(0,-1){.07}}\put(14,17){\line(0,-1){6}}
\multiput(13.93,16.93)(.6,-.6){11}{{\rule{.8pt}{.8pt}}}
\multiput(13.93,10.93)(.6,.6){11}{{\rule{.8pt}{.8pt}}}
\put(14,17.5){\makebox(0,0)[cc]{\tiny$u_1$}}
\put(20,17.5){\makebox(0,0)[cc]{\tiny$u_2$}}
\put(20,10.5){\makebox(0,0)[cc]{\tiny$u_3$}}
\put(14,10.5){\makebox(0,0)[cc]{\tiny$u_4$}}
\put(17,15){\makebox(0,0)[cc]{\tiny$v$}}
\put(14,17){\circle*{.5}}
\put(20,17){\circle*{.5}}
\put(17,14){\circle*{.5}}
\put(14,11){\circle*{.5}}
\put(20,11){\circle*{.5}}
\put(15.5,15.5){\vector(1,-1){.07}}\multiput(14,17)(.03370787,-.03370787){89}{\line(0,-1){.03370787}}
\multiput(20,17)(-.03370787,-.03370787){89}{\line(0,-1){.03370787}}
\put(9,14){\vector(1,0){4}}

\put(29,17){\vector(1,0){.07}}\put(26,17){\line(1,0){6}}
\put(32,14){\vector(0,-1){.07}}\put(32,17){\line(0,-1){6}}
\put(29,11){\vector(-1,0){.07}}\put(32,11){\line(-1,0){6}}
\put(26,14){\vector(0,-1){.07}}\put(26,17){\line(0,-1){6}}
\multiput(25.93,16.93)(.6,-.6){11}{{\rule{.8pt}{.8pt}}}
\multiput(25.93,10.93)(.6,.6){11}{{\rule{.8pt}{.8pt}}}
\put(26,17.5){\makebox(0,0)[cc]{\tiny$u_1$}}
\put(32,17.5){\makebox(0,0)[cc]{\tiny$u_2$}}
\put(32,10.5){\makebox(0,0)[cc]{\tiny$u_3$}}
\put(26,10.5){\makebox(0,0)[cc]{\tiny$u_4$}}
\put(29,15){\makebox(0,0)[cc]{\tiny$v$}}
\put(26,17){\circle*{.5}}
\put(32,17){\circle*{.5}}
\put(29,14){\circle*{.5}}
\put(26,11){\circle*{.5}}
\put(32,11){\circle*{.5}}
\put(27.5,15.5){\vector(1,-1){.07}}\multiput(26,17)(.03370787,-.03370787){89}{\line(0,-1){.03370787}}
\multiput(32,17)(-.03370787,-.03370787){89}{\line(0,-1){.03370787}}
\put(29,14){\line(0,1){0}}
\put(30.5,12.5){\vector(1,-1){.07}}\multiput(29,14)(.03370787,-.03370787){89}{\line(0,-1){.03370787}}
\put(21,14){\vector(1,0){4}}
\put(8,7){\vector(1,0){.07}}\put(5,7){\line(1,0){6}}
\put(11,4){\vector(0,-1){.07}}\put(11,7){\line(0,-1){6}}
\put(5,7){\line(0,-1){6}}
\multiput(4.93,.93)(.6,.6){11}{{\rule{.8pt}{.8pt}}}
\put(5,7.5){\makebox(0,0)[cc]{\tiny$u_1$}}
\put(11,7.5){\makebox(0,0)[cc]{\tiny$u_2$}}
\put(11,0.5){\makebox(0,0)[cc]{\tiny$u_3$}}
\put(5,0.5){\makebox(0,0)[cc]{\tiny$u_4$}}
\put(8,5){\makebox(0,0)[cc]{\tiny$v$}}
\put(17,7){\vector(1,0){.07}}\put(14,7){\line(1,0){6}}
\put(17,1){\vector(-1,0){.07}}\put(20,1){\line(-1,0){6}}
\put(14,7){\line(0,-1){6}}
\put(20,7){\line(0,-1){6}}
\multiput(19.93,.93)(0,0){3}{{\rule{.8pt}{.8pt}}}
\put(14,7.5){\makebox(0,0)[cc]{\tiny$u_1$}}
\put(20,7.5){\makebox(0,0)[cc]{\tiny$u_2$}}
\put(20,0.5){\makebox(0,0)[cc]{\tiny$u_3$}}
\put(14,0.5){\makebox(0,0)[cc]{\tiny$u_4$}}
\put(17,5){\makebox(0,0)[cc]{\tiny$v$}}
\put(26,7){\vector(1,0){.07}}\put(23,7){\line(1,0){6}}
\put(23,7){\line(0,-1){6}}
\put(23,1){\line(1,0){6}}
\put(23,7.5){\makebox(0,0)[cc]{\tiny$u_1$}}
\put(29,7.5){\makebox(0,0)[cc]{\tiny$u_2$}}
\put(29,0.5){\makebox(0,0)[cc]{\tiny$u_3$}}
\put(23,0.5){\makebox(0,0)[cc]{\tiny$u_4$}}
\put(26,5){\makebox(0,0)[cc]{\tiny$v$}}
\put(6.5,5.5){\vector(1,-1){.07}}\multiput(5,7)(.03370787,-.03370787){89}{\line(0,-1){.03370787}}
\put(9.5,2.5){\vector(1,-1){.07}}\multiput(8,4)(.03370787,-.03370787){89}{\line(0,-1){.03370787}}
\put(8,1){\vector(1,0){.07}}\put(5,1){\line(1,0){6}}
\put(5,4){\vector(0,-1){.07}}\put(5,7){\line(0,-1){6}}
\put(5,1){\line(1,1){6}}
\put(15.5,5.5){\vector(1,-1){.07}}\multiput(14,7)(.03370787,-.03370787){89}{\line(0,-1){.03370787}}
\put(18.5,2.5){\vector(-1,1){.07}}\multiput(20,1)(-.03370787,.03370787){89}{\line(0,1){.03370787}}
\put(14,4){\vector(0,-1){.07}}\put(14,7){\line(0,-1){6}}
\put(20,4){\vector(0,1){.07}}\put(20,1){\line(0,1){6}}
\put(14,1){\line(1,1){6}}
\put(29,1){\line(-1,0){6}}
\put(29,4){\vector(0,1){.07}}\put(29,1){\line(0,1){6}}
\put(26,1){\vector(-1,0){.07}}\put(29,1){\line(-1,0){6}}
\put(23,4){\vector(0,-1){.07}}\put(23,7){\line(0,-1){6}}
\put(24.5,2.5){\vector(-1,-1){.07}}\multiput(26,4)(-.03370787,-.03370787){89}{\line(0,-1){.03370787}}
\put(27.5,5.5){\vector(1,1){.07}}\multiput(26,4)(.03370787,.03370787){89}{\line(0,1){.03370787}}
\put(23,7){\line(1,-1){6}}
\put(5,7){\circle*{.5}}
\put(5,1){\circle*{.5}}
\put(11,1){\circle*{.5}}
\put(11,7){\circle*{.5}}
\put(8,4){\circle*{.5}}
\put(17,4){\circle*{.5}}
\put(14,7){\circle*{.5}}
\put(20,7){\circle*{.5}}
\put(14,1){\circle*{.5}}
\put(20,1){\circle*{.5}}
\put(23,7){\circle*{.5}}
\put(23,1){\circle*{.5}}
\put(29,1){\circle*{.5}}
\put(29,7){\circle*{.5}}
\put(4,20){\makebox(0,0)[cc]{\scriptsize$K_1\nabla K_{2,2}$}}
\put(13,20){\makebox(0,0)[cc]{\scriptsize$M_{K_{2,2}}=C_4^3$}}
\put(22,20){\makebox(0,0)[cc]{\scriptsize$M_{K_{2,2}}=C_4^1$}}
\put(31,20){\makebox(0,0)[cc]{\scriptsize$M_{K_{2,2}}=C_4^2$}}
\put(17,10){\makebox(0,0)[cc]{\scriptsize(1)}}
\put(29,10){\makebox(0,0)[cc]{\scriptsize(2)}}
\put(8,0){\makebox(0,0)[cc]{\scriptsize$M_1$}}
\put(17,0){\makebox(0,0)[cc]{\scriptsize$M_2$}}
\put(26,0){\makebox(0,0)[cc]{\scriptsize$M_3$}}
\put(26,4){\circle*{.5}}
\end{picture}

\end{center}
\caption{The graphs used in Lemma \ref{lem-f-11}}
\label{fig-x-1}
\end{figure}
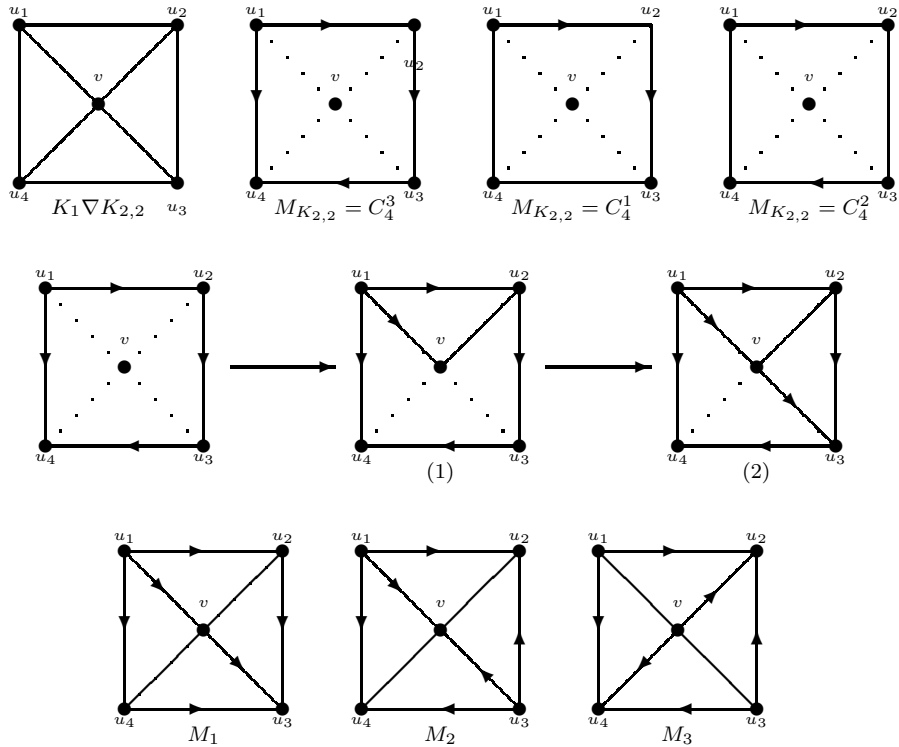

By applying Theorem \ref{lem-x-1}, we get the following result.
\begin{lemma}\label{lem-f-11}
Let $M_G$ be a connected mixed graph on $n$ vertices. If $\lambda_n(M_G)>-\frac{1+\sqrt{5}}{2}$ then $G$ is
$\{P_4,K_{1,3},K_{2,3},2K_1\nabla K_{1,2}=K_1\nabla K_{2,2},K_2\nabla 3K_1,K_2\nabla (K_2\cup K_1),K_2\nabla
K_{1,2},2K_1\nabla K_3\}$-free.
\end{lemma}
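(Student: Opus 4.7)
The plan is a standard forbidden-subgraph argument via interlacing: assuming $G$ contains an induced copy of some $H$ in the forbidden list, I will show that the corresponding induced mixed subgraph $M_H$ of $M_G$ must satisfy $\lambda_{|V(H)|}(M_H)\le-\frac{\sqrt{5}+1}{2}$, contradicting Corollary \ref{cor-f-1} together with the hypothesis $\lambda_n(M_G)>-\frac{\sqrt{5}+1}{2}$. Two structural constraints on $M_H$ are inherited from the hypothesis and used throughout: since $-\frac{\sqrt{5}+1}{2}>-\sqrt{3}$, Lemma \ref{lem-f-6} forces every mixed triangle of $M_H$ to lie in $\mathcal{C}_3$; since $-\frac{\sqrt{5}+1}{2}>-1.84$, Lemma \ref{lem-x-2} forces every induced mixed quadrangle of $M_H$ to lie in $\mathcal{C}_4$.

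The chordal subcase covers six of the eight forbidden graphs. The graphs $P_4$ and $K_{1,3}$ are trees, and a quick inspection of adjacencies shows that $K_2\nabla 3K_1$, $K_2\nabla(K_2\cup K_1)$, $K_2\nabla K_{1,2}$, and $2K_1\nabla K_3$ all have no induced cycle of length $\ge 4$. For each such $H$, Theorem \ref{lem-x-1} applies and yields $M_H\in[H]$, so $\lambda_{|V(H)|}(M_H)=\lambda_{|V(H)|}(H)$. A direct computation then shows $\lambda_n(P_4)=-\frac{\sqrt{5}+1}{2}$ and $\lambda_n(K_{1,3})=-\sqrt{3}$, while each of the other four small graphs also has smallest eigenvalue at most $-\frac{\sqrt{5}+1}{2}$, immediately closing the argument in these cases.

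The non-chordal subcase handles the remaining graphs $K_{2,3}$ and $K_1\nabla K_{2,2}=2K_1\nabla K_{1,2}$, both of which contain induced $C_4$ so that Theorem \ref{lem-x-1} no longer applies. I enumerate the switching classes of $M_H$ compatible with the inherited triangle/quadrangle constraints; in each case the enumeration is finite and short. For the triangle-free graph $K_{2,3}$, the three induced 4-cycles pairwise share an edge, so fixing the $\mathcal{C}_4$-type of one 4-cycle rigidly constrains the orientations of the others, leaving only the representatives $(1)$ and $(2)$ of Figure \ref{fig-x-1} up to switching equivalence. For $K_1\nabla K_{2,2}$, one fixes the inner $K_{2,2}$ as some $C_4^i\in\mathcal{C}_4$ and uses the $\mathcal{C}_3$-constraint on each of the four triangles containing the apex to determine the apex-edge orientations, arriving at the three candidates $M_1,M_2,M_3$ in the same figure. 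A direct eigenvalue computation on each representative gives $\lambda_n\le-\frac{\sqrt{5}+1}{2}$, completing the argument.

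The main obstacle is the non-chordal subcase: the switching-equivalence enumeration for $M_{K_{2,3}}$ and $M_{K_1\nabla K_{2,2}}$ and the eigenvalue verification for each resulting representative must be carried out carefully. By contrast, the chordal cases reduce essentially instantly, via Theorem \ref{lem-x-1}, to the known spectra of six small undirected graphs.
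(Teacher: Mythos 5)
Your proposal is correct and follows essentially the same route as the paper: the six chordal forbidden graphs are dispatched through Theorem \ref{lem-x-1} (the paper invokes Corollary \ref{cor-f-2} for the two trees, a special case) together with interlacing, and $K_1\nabla K_{2,2}$ is handled by enumerating the orientations compatible with the $\mathcal{C}_3$ and $\mathcal{C}_4$ constraints, exactly as in the paper's derivation of the candidates $M_1,M_2,M_3$ with smallest eigenvalue $-2$. The only divergence is $K_{2,3}$, which the paper excludes via Lemma \ref{lem-f-10} (equivalently, $K_{2,3}$ contains an induced $K_{1,3}$, so it is already forbidden); your enumeration there also succeeds --- indeed it ends in an outright contradiction, since no mixed $K_{2,3}$ has all three induced quadrangles in $\mathcal{C}_4$ --- though the panels $(1)$ and $(2)$ of Figure \ref{fig-x-1} that you cite as surviving representatives actually belong to the $K_1\nabla K_{2,2}$ analysis, not to $K_{2,3}$.
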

\begin{proof}
By Corollary \ref{cor-f-2}, we have $\lambda_4(M_{P_4})=\lambda_4(P_4)=-\frac{1+\sqrt{5}}{2}$ for any
$M_{P_4}\in\mathcal{M}_{P_4}$ and $\lambda_4(M_{K_{1,3}})=\lambda_4(K_{1,3})=-1.73<-\frac{1+\sqrt{5}}{2}$ for any
$M_{K_{1,3}}\in\mathcal{M}_{K_{1,3}}$. Thus, Corollary \ref{cor-f-1} implies that $G$ is $\{P_4,K_{1,3}\}$-free. Suppose to
the contrary that $G$ contains an induced $K_{2,3}$. Corollary \ref{cor-f-1} indicate that
$\lambda_5(M_{K_{2,3}})>-\frac{1+\sqrt{5}}{2}$, which contradicts Lemma \ref{lem-f-10}.

Suppose to the contrary that $G$ contains an induced $K_1\nabla K_{2,2}$ labelled as Fig.\ref{fig-x-1}. Since
$M=M_{K_1\nabla
K_{2,2}}$ has smallest eigenvalue greater than $-\frac{\sqrt{5}+1}{2}$, Lemma \ref{lem-f-6} implies all mixed triangles of
$M$ belong to $\mathcal{C}_3$ and Lemma \ref{lem-x-2} implies all quadrangles of $M$ belong to
$\mathcal{C}_4$. If the mixed induced quadrangle $M_{K_{2,2}}=C_4^3$, then we have either $u_1\rightarrow v$ or
$v\rightarrow u_2$ since $M[u_1,v,u_2]\in \mathcal{C}_3$. It leads to that $u_1\rightarrow v$ and thus
$v\leftrightarrow u_2$ since otherwise $v\rightarrow u_2$ and $M[v,u_2,u_3]\not\in\mathcal{C}_3$ (see
Fig.\ref{fig-x-1}(1)). Since $v\leftrightarrow u_2$, $u_2\rightarrow u_3$ and $M[v,u_2,u_3]\in\mathcal{C}_3$,
we have $v\rightarrow u_3$ (see Fig.\ref{fig-x-1}(2)). However, $M[v,u_3,u_4]$ cannot belong to
$\mathcal{C}_3$, a contradiction. Similarly, if the mixed induced quadrangle $M_{K_{2,2}}=C_4^1$ or $C_4^2$,
then $M\in\{M_1,M_2,M_3\}$ whose smallest eigenvalues are all $-2<-\frac{\sqrt{5}+1}{2}$, a contradiction.

Suppose to the contrary that $G$ contains an induced subgraph $H$ in $\{K_2\nabla 3K_1,K_2\nabla (K_2\cup K_1),K_2\nabla
K_{1,2},2K_1\nabla K_3\}$. Therefore, $M_G$ contains a mixed induced graph $M_H$ with order $m$. Corollary \ref{cor-f-1}
indicates that $\lambda_m(M_H)\ge \lambda_n(M_G)>-\frac{1+\sqrt{5}}{2}$. Thus, each mixed triangle in $M_H$ belongs to
$\mathcal{C}_3$. Note that $H$ contains no cycle of length greater than $3$. Theorem \ref{lem-x-1} implies that
$\lambda_{m}(M_H)=\lambda_m(H)$. It leads to a contradiction since $\lambda_5(K_2\nabla 3K_1)=-2$,
$\lambda_5(K_2\nabla(K_2\cup K_1))=-1.68$, $\lambda_5(K_2\nabla K_{1,2})=-1.65$ and $\lambda_5(2K_1\nabla K_3)=-1.65$ which
are all smaller than $-\frac{1+\sqrt{5}}{2}$.
\end{proof}
From Lemma \ref{lem-f-11}, we determine the underlying graphs of $M_G$ with smallest eigenvalue greater than
$-\frac{1+\sqrt{5}}{2}$.
\begin{lemma}\label{lem-f-12}
Let $M_G$ be a connected mixed graph on $n$ vertices. If $\lambda_n(M_G)> -\frac{1+\sqrt{5}}{2}$, then $G$ belongs to
\[
\left\{ K_{2,2}, K_1\nabla K_{1,2}, 2K_2\nabla 2K_1, (K_2\cup K_1)\nabla 2K_1 \right\}\bigcup \left\{(K_s \cup
K_t)\nabla K_1\mid s,t\ge 0, s+t=n-1\right\}.
\]
\end{lemma}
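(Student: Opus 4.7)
My plan is to start from Lemma \ref{lem-f-11}, which supplies the eight forbidden induced subgraphs of $G$. The cases $n\leq 2$ are trivial (they lie in $\{(K_s\cup K_t)\nabla K_1\}$ with $s+t\leq 1$), so I would assume $n\geq 3$. Since $G$ is $P_4$-free and connected, Lemma \ref{lem-f-3} decomposes $G$ as $G_1\nabla G_2$. I would then split the analysis according to whether $G$ contains an induced $K_{2,2}$.

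\textbf{Case 1: $G$ contains an induced $K_{2,2}$.} Here I would first observe that no induced $K_{2,2}$ can lie entirely inside one factor, since placing a vertex of the other factor above it would create the forbidden $K_1\nabla K_{2,2}$. So the $K_{2,2}$ crosses the join, yielding non-adjacent pairs $a_1,b_1\in V(G_1)$ and $a_2,b_2\in V(G_2)$. For any other vertex $x\in V(G_1)$, the five-vertex induced subgraph on $\{x,a_1,b_1,a_2,b_2\}$ rules out $x$ being adjacent to neither (giving $K_{1,3}$) or to both (giving $K_1\nabla K_{2,2}$) of $a_1,b_1$, so $x$ is adjacent to exactly one. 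This partitions $V(G_1)=A_1\cup B_1$, and a further $K_{1,3}$-argument (using a vertex of $V(G_2)$ as apex) makes each of $A_1,B_1$ a clique, while $P_4$-freeness kills any edge between $A_1\setminus\{a_1\}$ and $B_1\setminus\{b_1\}$. So $G_1=K_{|A_1|}\cup K_{|B_1|}$, and symmetrically $G_2=K_{|A_2|}\cup K_{|B_2|}$. If both factors had $\geq 3$ vertices, picking an edge from each together with a lone vertex from one would expose an induced $K_2\nabla(K_2\cup K_1)$, so one of them must be exactly $2K_1$; then $2K_1\nabla K_3$-freeness caps the part sizes on the other side at $2$, leaving exactly the three graphs $K_{2,2}$, $(K_2\cup K_1)\nabla 2K_1$, and $2K_2\nabla 2K_1$.

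\textbf{Case 2: $G$ is $C_4$-free.} Then $G$ is trivially perfect, and being connected, it has a universal vertex $v$; I would set $H:=G-v$. The $K_{1,3}$-freeness of $G$ then forces $H$ to be $3K_1$-free, and noting that attaching $v$ to a paw (resp.\ to a $K_4-e$) inside $H$ recreates a $K_2\nabla(K_2\cup K_1)$ (resp.\ a $K_2\nabla K_{1,2}$) in $G$, both paw and $K_4-e$ are also forbidden inside $H$. I would then invoke Olariu's theorem: in a paw-free graph every connected component is triangle-free or complete multipartite. The triangle-free $\{P_4,C_4,K_{1,3},3K_1\}$-free connected components are only $K_1,K_2,K_{1,2}$; the complete-multipartite components, being $C_4$-free, have at most one part of size $\geq 2$, and the $K_{1,3}$- and $(K_4-e)$-bans eliminate any such enlarged part once a triangle is present, leaving only $K_m$. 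A final use of $3K_1$-freeness bounds the number of components of $H$ by two and forbids any $K_{1,2}$ component to coexist with another, giving $H\in\{K_{1,2}\}\cup\{K_s\cup K_t:s,t\geq 0\}$. Restoring $v$ gives $G=K_1\nabla K_{1,2}$ or $G=(K_s\cup K_t)\nabla K_1$.

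The main technical obstacle will be the bookkeeping of Case 2: the translation of each five-vertex forbidden subgraph of $G$ into a cleaner four-vertex constraint on $H$, and the careful combination of Olariu's theorem with the remaining forbidden list to isolate $K_{1,2}$ as the unique non-clique-union survivor.
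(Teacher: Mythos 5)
Your argument is correct, and it reaches the same list of underlying graphs as Lemma \ref{lem-f-12}, but by a genuinely different route. The paper stays entirely inside its own toolkit: it applies Lemma \ref{lem-f-3} to write $G=X\nabla Y$, uses the forbidden joins of Lemma \ref{lem-f-11} to force $Y\in\{2K_1,K_s\}$ via Lemma \ref{lem-f-4}, and in the residual case $Y=K_1$ applies Lemma \ref{lem-f-3} a second time to $X$, grinding through the subcases. You instead dichotomize on whether $G$ contains an induced $C_4$. Your Case 1 is a direct neighborhood analysis around a $K_{2,2}$ that must straddle the join (correctly ruled into a $2$--$2$ split by part, since a $3$--$1$ split would force an edge inside a part and a $4$--$0$ split would create the forbidden $K_1\nabla K_{2,2}$), and the subsequent $K_{1,3}$/$P_4$ arguments pinning $G_i=K_{p}\cup K_{q}$ and the size bounds from $K_2\nabla(K_2\cup K_1)$ and $2K_1\nabla K_3$ all check out. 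Your Case 2 imports two classical structure theorems not used in the paper: Wolk's universal-vertex property of connected $\{P_4,C_4\}$-free graphs, and Olariu's paw-free characterization; the translations $K_1\nabla(\mathrm{paw})=K_2\nabla(K_2\cup K_1)$ and $K_1\nabla(K_4-e)=2K_1\nabla K_3$ are exact, so $H=G-v$ is indeed $\{3K_1,\mathrm{paw},K_4-e\}$-free, and your component analysis correctly isolates $K_{1,2}$ and unions of at most two cliques. What your approach buys is a cleaner conceptual split (the $C_4$ case and the trivially-perfect case never interact) and avoidance of the paper's second, somewhat fiddly application of the cograph decomposition; what it costs is the reliance on two external theorems where the paper is self-contained via Lemmas \ref{lem-f-3}, \ref{lem-f-4} and \ref{lem-f-10}. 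One cosmetic remark: the paper's forbidden graphs $K_2\nabla K_{1,2}$ and $2K_1\nabla K_3$ are in fact the same graph $K_5-e$, which is why your single diamond test suffices where the paper cites both.
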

\begin{proof}
We may assume that $n\ge 2$ since there is nothing to prove when $n=1$. From Lemma \ref{lem-f-11}, we have $G$ is $P_4$-free
and thus $G=X\bigtriangledown Y$ with $|X|,|Y|\ge 1$ due to Lemma \ref{lem-f-3}. If both $X$ and $Y$ have no edge, then
$G= K_{m,n}$ and thus $G\in\{K_2, K_{1,2},K_{2,2}\}$ due to Lemma \ref{lem-f-10}, where both $K_2=(K_1\cup K_0)\nabla K_1$
and $K_{1,2}=(K_1\cup K_1)\nabla K_1$ have the form $(K_s\cup K_t)\nabla K_1$. Now we may assume that one of $X$ and $Y$
contains $K_2$, say $X$.
Therefore, Lemma \ref{lem-f-11} implies that $Y$ is $\{3K_1, K_2\cup K_1, K_{1,2}\}$-free and thus $Y\in\{2K_1,K_s\mid
s\geq1\}$ due to Lemma \ref{lem-f-4}.  If $Y=K_s $ with $s\geq2$, then Lemma \ref{lem-f-11} implies that $X$ is $\{3K_1,
K_2\cup K_1, K_{1,2}\}$-free. Thus, Lemma \ref{lem-f-4} means that $X=K_r$ with $r\ge 2$ since $X$ contains $K_2$.
Therefore, $G=K_n=(K_{n-1}\cup K_0)\nabla K_1$ with $n\ge 4$. If $Y= 2K_1$, then Lemma \ref{lem-f-11} indicates that $X$
is $\{ 3K_1, K_{1,2}, K_3\}$-free. Hence, $X\in\{2K_2,K_2\cup K_1,K_2\}$ due to Lemma \ref{lem-f-4}, and thus $G\in\{
2K_2\bigtriangledown 2K_1,(K_2\cup K_1)\bigtriangledown 2K_1, K_2\bigtriangledown 2K_1=K_1\nabla K_{1,2}\}$.

In what follows, we consider the case of $Y= K_1$, that is $G=X\bigtriangledown K_1$.
Since $G$ is $K_{1,3}$-free according to Lemma \ref{lem-f-11}, we have $X$ is $\{3K_1\}$-free and $X$ has at most two
connected components.
Suppose that $X$ has two connected component, say $X_1$ and $X_2$ with $|X_1|,|X_2|\geq 1$.
Then both $X_1$ and $ X_2$ are $P_3$-free since otherwise $X$ has an induced $3K_1$, and so $X_1$ and $ X_2$ are complete
graphs. Therefore, $G=(K_s\cup K_t)\bigtriangledown K_1$ with $s+t=n-1$ and $s,t\geq1$.
Next we may assume that $X$ is connected. Since $X$ is $P_4$-free, we have $X=X_1\bigtriangledown Y_1$ with $|X_1|,
|Y_1|\geq1$ from Lemma \ref{lem-f-3}.
If both $X_1$ and $Y_1$ have no edge, then $X$ is a bipartite graph and so $X\in\{ K_2, K_{1,2}, K_{2,2}\}$ by
Lemma \ref{lem-f-10}. Note that Lemma \ref{lem-f-11} means that $G$ is $K_1\nabla K_{2,2}$-free. Thus, $G\in\{K_3,
K_1\bigtriangledown K_{1,2}\}$.
Now we may assume $X_1$ contains a $K_2$. Then $Y_1$ is a $\{3K_1, K_2\cup K_1, K_{1,2}\}$-free by Lemma \ref{lem-f-11}.
Hence, $Y_1\in \{ K_s, 2K_1\}$ by Lemma \ref{lem-f-4}. If $Y_1= K_s$ with $s\geq2$, then $X_1$ is $\{3K_1, K_2\cup K_1,
K_{1,2}\}$-free by Lemma \ref{lem-f-11}.
By Lemma \ref{lem-f-4}, we have $X_1= K_t (t\geq2)$ since $X_1$ has an edge.
Note that $G=X \bigtriangledown K_1=(X_1\bigtriangledown Y_1)\bigtriangledown K_1$.
Therefore, $G= (K_s \bigtriangledown K_t)\bigtriangledown K_1=K_n$ for $n\geq5$. If $Y_1= 2K_1$, then $G=
X_1\bigtriangledown 2K_1 \bigtriangledown K_1=X_1\bigtriangledown
K_{1,2}$.
Lemma \ref{lem-f-11} indicates that $X_1$ is $\{2K_1, K_2\}$-free, and thus $X_1= K_1$. Therefore,
$G= K_1\bigtriangledown K_{1,2}$. If $Y_1= K_1$, then $G=X_1\bigtriangledown K_1\bigtriangledown
K_1=X_1\bigtriangledown K_2$.
Therefore $X_1$ is $\{3K_1, K_2\cup K_1, K_{1,2}\}$-free by Lemma \ref{lem-f-11}.
Lemma \ref{lem-f-4} indicates that $X_1= K_s (s\geq2)$ since $X_1$ has an edge.
Thus, $G=K_s\bigtriangledown K_2=(K_{n-1}\nabla K_0)\nabla K_1$ with $n\geq4$.

The proof is completed.
\end{proof}

In what follows, we completely determine $M_G$ with smallest eigenvalue greater than $-\frac{1+\sqrt{5}}{2}$ by  considering
its underlying graphs as given in Lemma \ref{lem-f-12} one by one.
\begin{lemma}\label{2K2jion}
Let $M_{G}$ be a mixed graph with $G=(K_2\cup
K_1)\nabla2K_1$. If
any mixed triangle of $M_G$ belongs to $\mathcal{C}_3$ and any induced mixed quadrangle of $M_G$ belongs to $\mathcal{C}_4$, then $ M_{G}\in \{H_{1},...,H_{9}\}$ shown in the Appendix.
\end{lemma}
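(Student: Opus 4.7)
Label the vertices of $G=(K_2\cup K_1)\nabla 2K_1$ so that $V(K_2)=\{a,b\}$, the isolated vertex of $K_2\cup K_1$ is $c$, and $V(2K_1)=\{x,y\}$. The edge set is $\{ab,ax,ay,bx,by,cx,cy\}$, yielding precisely two triangles ($abx$ and $aby$) and two induced 4-cycles ($axcy$ and $bxcy$). My plan is first to exploit a chordal induced subgraph to fix most of the gauge, next to translate the remaining hypotheses into holonomy equations that pin down a single switching class, and finally to enumerate the concrete members of that class.

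For the first step, observe that the subgraph $G[a,b,c,x]$ is chordal (a triangle $abx$ with a pendant $c$ at $x$). Since every mixed triangle in $M_G$ lies in $\mathcal{C}_3$ by hypothesis, Theorem~\ref{lem-x-1} applied to this subgraph lets me switch $M_G$ so that $h_{ab}=h_{ax}=h_{bx}=h_{cx}=1$. This uses the diagonal phases at $a,b,c,x$; only $d_y$ remains free, and it acts on $h_{ay},h_{by},h_{cy}$ by simultaneous multiplication by a common unit.

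For the second step, encode the remaining hypotheses as equations on the holonomies. The condition that triangle $aby$ lies in $\mathcal{C}_3$ (holonomy $1$) gives $h_{ab}\,h_{by}\,\overline{h_{ay}}=1$, hence $h_{by}=h_{ay}$ in our gauge. The condition that quadrangle $axcy$ lies in $\mathcal{C}_4$ (holonomy $-1$) gives $h_{ax}\,h_{xc}\,h_{cy}\,\overline{h_{ay}}=-1$, hence $h_{cy}=-h_{ay}$. The 4-cycle $bxcy$ contributes no new constraint because $bxcy=abx\oplus aby\oplus axcy$ in the cycle space, so its holonomy equals $1\cdot 1\cdot(-1)=-1$ automatically. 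Since $h_{cy},h_{ay}\in\{1,i,-i\}$ and $h_{cy}=-h_{ay}$, the value $h_{ay}=1$ is forbidden, leaving $h_{ay}\in\{i,-i\}$; the two options are exchanged by the residual gauge $d_y\mapsto -d_y$. Thus the hypotheses single out one switching class with canonical representative $h_{ay}=h_{by}=i$, $h_{cy}=-i$, and all other edge weights equal to $1$.

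For the final step, I enumerate the nine graphs $H_1,\ldots,H_9$ by running through the switching orbit of the canonical representative while retaining only those diagonal transformations $D=\mathrm{diag}(d_a,d_b,d_c,d_x,d_y)$ for which no edge weight becomes $-1$; grouping the admissible outcomes by the action of the underlying-graph automorphism group $\mathrm{Aut}(G)=\langle(a\,b),(x\,y)\rangle$ produces the list of the Appendix. The main obstacle is this last step: although everything up to the unique switching class is forced, matching the admissible representatives one-by-one with the nine pictures in the Appendix requires a careful but routine finite case check.
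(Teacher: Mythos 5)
Your reduction is genuinely different from the paper's proof and, up to its last step, it is correct and cleaner. The paper fixes the two induced quadrangles $M_G[v_1,v_2,v_3,v_4]$ and $M_G[v_1,v_5,v_3,v_4]$ and runs a direct case analysis on their types in $\mathcal{C}_4$ and on the orientations within each type, propagating arcs through the triangle condition; you instead observe that membership in $\mathcal{C}_3$ (resp.\ $\mathcal{C}_4$) is exactly the condition that the triangle (resp.\ induced quadrangle) holonomy equals $1$ (resp.\ $-1$), gauge-fix on the chordal induced subgraph $G[a,b,c,x]$ via Theorem~\ref{lem-x-1}, and conclude that the hypotheses single out one switching class with canonical representative $h_{ay}=h_{by}=i$, $h_{cy}=-i$ and all other weights $1$. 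All of this is right, including the cycle-space remark that the second quadrangle imposes no new condition.

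The difficulty is that your proof stops exactly where the content of the lemma begins: the statement to be proved \emph{is} the explicit list $\{H_1,\dots,H_9\}$, and your final step --- enumerate the switching orbit, discard switchings that create a weight $-1$, quotient by $\mathrm{Aut}(G)$ --- is only described, with the outcome asserted to match the Appendix. Carried out, it does not. Normalizing $d_y=1$, the admissible switchings number $36$ (split $8,8,6,14$ over $d_x=1,-1,i,-i$), giving $36$ distinct labeled mixed graphs satisfying the hypotheses; Burnside's lemma with $\mathrm{Aut}(G)=\langle(a\,b),(x\,y)\rangle$ gives $(36+20+0+0)/4=14$ isomorphism classes, not $9$. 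A concrete witness is the converse $M_0^{c}$ of your canonical representative $M_0$: it has arcs $y\to a$, $y\to b$, $c\to y$ and all other edges undirected, it satisfies both hypotheses (both $\mathcal{C}_3$ and $\mathcal{C}_4$ are closed under arc reversal), yet it is not isomorphic to $M_0$, since in $M_0$ the vertex $y$ has two in-arcs and one out-arc while no vertex of $M_0^{c}$ does. More generally the converses of $H_1,H_2,H_3,H_7,H_9$ are admissible and isomorphic to none of $H_1,\dots,H_9$ (whereas $H_4,H_5$ are each other's converses and $H_6,H_8$ are self-converse, which is why those cases close up). So the deferred enumeration is not a routine formality: done honestly it contradicts the stated conclusion, and it exposes that the paper's own case analysis silently drops the arc-reversed orientations of the quadrangles in its Cases 1, 3 and 4. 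To complete your argument you must actually perform the $36$-element enumeration and then either exhibit the bijection with the Appendix or record the discrepancy; as written, the claim that the orbit ``produces the list of the Appendix'' is unverified and false.
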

\begin{proof}
Let $V(G)=\{v_1,v_2,...,v_5\}$ (see the Appendix). Clearly, $M_G$ has two induced
quadrangles $M_G[v_1,v_2,v_3,v_4]$ and $M_G[v_1,v_5,v_3,v_4]$. Note that any induced mixed quadrangle belongs to
$\mathcal{C}_4$. We divide  four cases to discuss.

\textbf{Case 1. one of them is $C_4^3$.}

In this case, we may assume $M_G[v_1,v_2,v_3,v_4]=C_4^3$. Clearly, there are two different orientations of the mixed cycle
$M_G[v_1,v_2,v_3,v_4]$, that is, $v_1\rightarrow v_2\rightarrow v_3\rightarrow v_4$ and $v_1\rightarrow v_4$, or
$v_2\rightarrow v_3\rightarrow v_4\rightarrow  v_1$ and $v_2\rightarrow v_1$. If the former happens, then it must holds that
$M_G[v_1,v_5,v_3,v_4]=C_4^3$, ant thus $v_1\rightarrow v_5$, $v_5\rightarrow v_3$ and $v_2\leftrightarrow v_5$ since any
induced  mixed triangle  belongs to $\mathcal{C}_3$. It yields that $M_G= H_{1}$. If the latter happens, then
then $M_G[v_1,v_5,v_3,v_4]=C_4^1$ or $C_4^3$. Therefore, one can easily verify that $M_G= H_{2}$ when
$M_G[v_1,v_5,v_3,v_4]=C_4^1$, and $M_G=H_{3}$ when $M_G[v_1,v_5,v_3,v_4]=C_4^3$ similarly.

\textbf{Case 2. $M_G[v_1,v_2,v_3,v_4]= M_G[v_1,v_5,v_3,v_4]=C_4^2$}.

In this case, there are also two different orientations of $M_G[v_1,v_2,v_3,v_4]$, that is $v_1\rightarrow v_2$,
$v_2\leftrightarrow v_3$, $v_3\rightarrow v_4$ and $v_4\leftrightarrow v_1$, or $v_1\leftrightarrow v_2$, $v_2\rightarrow
v_3$, $v_3\leftrightarrow v_4$ and $v_4\rightarrow v_1$. Therefore, one can easily verify that $M_G= H_{4}$ when the former
happens and $M_G= H_{5}$ when the latter happens by noticing that any mixed triangle  belongs to $\mathcal{C}_3$.

\textbf{Case 3: $M_G[v_1,v_2,v_3,v_4]= M_G[v_1,v_5,v_3,v_4]=C_4^1$}.

In this case, there are three different orientations of $M_G[v_1,v_2,v_3,v_4]$, that is,  $v_1\rightarrow v_2$,
$v_2\rightarrow v_3$, $v_3\leftrightarrow v_4$ and $v_4\leftrightarrow v_1$, or $v_1\leftrightarrow v_2$, $v_2\rightarrow
v_3$, $v_3\rightarrow v_4$ and $v_4\leftrightarrow v_1$, or $v_1\leftrightarrow v_2$, $v_2\leftrightarrow v_3$,
$v_3\rightarrow v_4$ and $v_4\rightarrow v_1$. One can easily verify that $M_G= H_{6}$ when the first case happens, $M_G=
H_{7}$ when the second case happens, and $M_G= H_{8}$ when the third case happens.

\textbf{Case 4: $M_G[v_1,v_2,v_3,v_4]= C_4^1$ and $ M_G[v_1,v_5,v_3,v_4]= C_4^2$}.

In this case, there are three different orientations of $M_G[v_1,v_2,v_3,v_4]$, that is,  $v_1\rightarrow v_2$,
$v_2\rightarrow v_3$, $v_3\leftrightarrow v_4$ and $v_4\leftrightarrow v_1$, or $v_1\leftrightarrow v_2$, $v_2\rightarrow
v_3$, $v_3\rightarrow v_4$ and $v_4\leftrightarrow v_1$, or $v_1\leftrightarrow v_2$, $v_2\leftrightarrow v_3$,
$v_3\rightarrow v_4$ and $v_4\rightarrow v_1$. If the first or the third case happens, then $M_G[v_1,v_5,v_3,v_4]$ cannot be
$C_4^2$, which is impossible. If the second case happens, then $M_G= H_{9}$.

This completes the proof.
\end{proof}

As similar to Lemma \ref{2K2jion}, we present the following result but omit the tautological proof.
\begin{lemma}\label{K2cupK1}
Let $M_{G}$ be a mixed graph with $G=2K_2\nabla2K_1$. If any mixed triangle of $M_G$ belongs to $\mathcal{C}_3$ and any induced mixed quadrangle
belongs to $\mathcal{C}_4$, then $ M_{G}\in \{H_{10},...,H_{20}\}$ shown in the Appendix.
\end{lemma}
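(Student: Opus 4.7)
The plan is to imitate the strategy of Lemma \ref{2K2jion}, treating $G=2K_2\nabla 2K_1$ as a collection of mutually overlapping induced triangles and quadrangles whose types are severely restricted by the hypotheses. Label $V(G)=\{v_1,\ldots,v_6\}$ so that the two independent edges are $v_1v_2$ and $v_3v_4$ and the two universal vertices are $v_5,v_6$. Then $G$ contains exactly four induced quadrangles, namely $Q_{ij}=G[v_1,v_{2+i},v_j,v_{8-j}]$ for the appropriate indexing $i\in\{0,1\}$, $j\in\{5,6\}$; concretely they are $G[v_1,v_3,v_2,v_4]$ with the two apex vertices $v_5,v_6$ replaced into alternating slots, and there are also four triangles $G[v_1,v_2,v_5]$, $G[v_1,v_2,v_6]$, $G[v_3,v_4,v_5]$, $G[v_3,v_4,v_6]$.

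First I would fix one induced quadrangle, say $M_G[v_1,v_5,v_3,v_6]$, and split into the three possibilities $C_4^1$, $C_4^2$, $C_4^3$ allowed by the hypothesis. In each case the other induced quadrangle $M_G[v_2,v_5,v_4,v_6]$ must again belong to $\mathcal{C}_4$, which yields a small finite list of orientation patterns on the six ``join'' arcs. The triangles $\{v_1,v_2,v_5\}$, $\{v_1,v_2,v_6\}$, $\{v_3,v_4,v_5\}$, $\{v_3,v_4,v_6\}$ together with the requirement that each belongs to $\mathcal{C}_3=\{K_3,K_3^{2,2},K_3^{2,3}\}$ then \emph{force} the orientation of the two internal edges $v_1v_2$ and $v_3v_4$ once the join edges are fixed; this is the same propagation mechanism used in Lemmas \ref{lem-f-7} and \ref{lem-f-8}.

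Second I would carry out the case distinction. By symmetry between $\{v_5,v_6\}$ and between the two edges $v_1v_2$, $v_3v_4$ the number of truly distinct sub-cases is manageable. For each sub-case I would check that the resulting mixed graph either violates one of the two hypotheses in some other triangle or quadrangle (and is discarded), or coincides, up to the symmetries of $G$, with exactly one of the eleven mixed graphs $H_{10},\ldots,H_{20}$ listed in the Appendix. Switching equivalence will also have to be respected in the matching, but since the lemma asks only for equality (not up to switching), I would keep every non-isomorphic orientation separately.

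The main obstacle, as the authors note by skipping the proof, is sheer bookkeeping: four triangles and four induced quadrangles must be controlled simultaneously, and a naive enumeration produces hundreds of configurations. The critical reduction is to exploit the symmetry of $G=2K_2\nabla 2K_1$ under the Klein four-group generated by swapping $v_1\leftrightarrow v_2$, $v_3\leftrightarrow v_4$ and $v_5\leftrightarrow v_6$, together with Theorem \ref{lem-x-1} applied to the chordal subgraph $G-\{v_5\}$ or $G-\{v_6\}$ (each of which is $K_1\nabla(K_2\cup K_2)$, i.e., chordal), so that within such a subgraph the mixed orientation is switching-equivalent to the undirected one. This drastically cuts the casework: after normalising by switching on one pendant-like subgraph, only the orientations of the edges incident to the remaining apex must be decided, and those choices are exactly classified by the three types in $\mathcal{C}_4$. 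The remaining verification that this normalisation exhausts precisely $H_{10},\ldots,H_{20}$ is then the routine check that the authors suppress.
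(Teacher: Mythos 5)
Your overall plan --- fix the $\mathcal{C}_4$-type of one induced quadrangle, let the remaining quadrangles' membership in $\mathcal{C}_4$ constrain the join edges, and let the four triangles' membership in $\mathcal{C}_3$ force the orientations of $v_1v_2$ and $v_3v_4$ --- is exactly the strategy of the paper's proof of Lemma \ref{2K2jion}, which is what the authors mean when they declare this proof ``tautological'' and omit it. Your propagation claim is sound: for a triangle two of whose edges are already fixed, membership in $\mathcal{C}_3$ either determines the third edge uniquely or rules the configuration out, just as in Lemmas \ref{lem-f-7} and \ref{lem-f-8}.

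Two reservations. First, the proposal stops at a plan: the enumeration itself, which is the entire content of the lemma, is never carried out, so the identification with precisely $H_{10},\ldots,H_{20}$ is not established. (In executing it, note that all four induced quadrangles $v_av_5v_bv_6$, $a\in\{1,2\}$, $b\in\{3,4\}$, must be checked; they pairwise share the two apices $v_5,v_6$, a different coupling from Lemma \ref{2K2jion} where the two quadrangles share three vertices; and there are eight join edges, not six.) Second, the ``critical reduction'' via Theorem \ref{lem-x-1} is not sound for this statement: a switching that trivializes $M_{G-v_5}$ replaces $M_G$ by a different member of $[M_G]$ (the edges at $v_6$ and at $v_5$ are altered by the diagonal entries of the other endpoints), so at best you classify the switching class, whereas the conclusion is an explicit list of mixed graphs --- a point you yourself make one paragraph earlier when you observe that the lemma asks for equality, not equality up to switching. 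Undoing the normalization to recover every admissible orientation reinstates exactly the casework the reduction was meant to avoid. Dropping that shortcut and executing the case analysis of your first two paragraphs is the correct route, and it is the paper's.
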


 The {\it coalescence} $M\bullet_{u,v} M'$ of two mixed graphs $M$ and $M'$ is obtained from
$M\cup M'$ by identifying a vertex $u$ of $M$ with a vertex $v$ of $M'$.

\begin{lemma}\label{lem-x-3}
Let $G$ be a connected graph with a cut vertex $v$ such that $G-v=G_1\cup G_2$ with $V_1=V(G_1)$ and $V_2=V(G_2)$. If
$G_1^+=G[V_1\cup \{v\}]$ and $G_2^+=G[V_2\cup\{v\}]$, then $[G]=\{M\bullet_{v,v} M'\mid M\in [M_{G_1^+}], M'\in[G_2^+]\}$.
\end{lemma}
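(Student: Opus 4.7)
The plan is to prove the set equality by establishing both inclusions, exploiting the fact that a switching is encoded by a diagonal matrix with unimodular entries in $\{\pm1,\pm i\}$ and that this matrix acts independently on different blocks provided we fix its value at the cut vertex.

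For the inclusion $\{M\bullet_{v,v}M' \mid M\in[G_1^+],\,M'\in[G_2^+]\}\subseteq [G]$, I would start from diagonal matrices $D_1$ on $V_1\cup\{v\}$ and $D_2$ on $V_2\cup\{v\}$ with entries in $\{\pm1,\pm i\}$ such that $H(M)=D_1H(G_1^+)D_1^*$ and $H(M')=D_2H(G_2^+)D_2^*$. Because $D\mapsto DHD^*$ is invariant under multiplying $D$ by a global unimodular scalar, I can normalize so that $(D_1)_{vv}=(D_2)_{vv}=1$. I would then glue them into a single diagonal matrix $D$ indexed by $V(G)$ with $D_{vv}=1$, $D_{uu}=(D_1)_{uu}$ for $u\in V_1$, and $D_{uu}=(D_2)_{uu}$ for $u\in V_2$. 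A block-by-block verification — using that $v$ is a cut vertex, so the $(V_1,V_2)$ block of $H(G)$ is zero — shows $DH(G)D^*$ agrees with the Hermitian matrix of $M\bullet_{v,v}M'$, giving the coalescence as an element of $[G]$.

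For the reverse inclusion $[G]\subseteq \{M\bullet_{v,v}M' \mid M\in[G_1^+],\,M'\in[G_2^+]\}$, I would start with an arbitrary $N\in[G]$, write $H(N)=DH(G)D^*$, and again normalize $D_{vv}=1$. Letting $D_1$ and $D_2$ denote the restrictions of $D$ to $V_1\cup\{v\}$ and $V_2\cup\{v\}$, the induced mixed subgraphs satisfy $H(N[V_1\cup\{v\}])=D_1H(G_1^+)D_1^*\in[G_1^+]$ and $H(N[V_2\cup\{v\}])=D_2H(G_2^+)D_2^*\in[G_2^+]$. Since $v$ separates $V_1$ from $V_2$ in $G$, there are no edges of $N$ between $V_1$ and $V_2$, and therefore $N=N[V_1\cup\{v\}]\bullet_{v,v}N[V_2\cup\{v\}]$, completing this direction.

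The argument is essentially bookkeeping: the only ingredient beyond the definitions is the normalization $D_{vv}=1$ (allowed by the unimodular scaling freedom) and the observation that a cut vertex forces the off-diagonal $(V_1,V_2)$ blocks of $H(G)$ to vanish. No substantive obstacle is expected; the main point to be careful about is verifying that the glued diagonal matrix reproduces both halves simultaneously, which is exactly what the normalization at $v$ is designed to ensure.
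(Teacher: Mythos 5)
Your proof is correct and follows essentially the same route as the paper: both directions are handled by gluing or restricting the switching diagonal matrices at the cut vertex, with your normalization $D_{vv}=1$ playing the same role as the paper's compatibility factor $\epsilon$ satisfying $D_2(v)=\epsilon D_1(v)$. The key observations you use --- that the $(V_1,V_2)$ block of $H(G)$ vanishes and that an element of $[G]$ decomposes as a coalescence over $v$ --- are exactly those in the paper's argument.
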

\begin{proof}
It is clear that $G=G_1^+\bullet_{v,v}G_2^+$. For any $M\bullet M'$ with $M\in [M_{G_1^+}]$ and $M'\in[G_2^+]$, there exist
diagonal matrices $D_1$ and $D_2$ with diagonal entries in $\{\pm 1,\pm i\}$ such that $D_1H(M)D_1^*=H(G_1^+)$ and
$D_2H(M')D_2=H(G_2^+)$. Note that the $v$-th diagonal entries of $D_1$ and $D_2$ satisfy $D_2(v)=\epsilon D_1(v)$ for some
$\epsilon\in\{\pm 1,\pm i\}$. Let $D$ be the diagonal matrix indexed by $V(G)$ such that the diagonal entries are
$D(v_1)=D_1(v_1)$ for $v_1\in V_1\cup\{v\}$ and $D(v_2)=\epsilon D_2(v_2)$ for $v_2\in V_2$. Therefore,  one can easily
verify that $DH(M\bullet_{v,v}M')D^*=H(G_1^+\bullet_{v,v}G_2^+)=H(G)$, and thus $M\bullet_{v,v}M'\in[G]$.

Conversely, for any $M_G\in[G]$, there exists diagnal matrix $D$ such that $DH(M_G)D^*=H(G)$. Note that $M_G=M\bullet_{v,v}
M'$ where $M=M_G[V_1\cup \{v\}]$ and $M'=M_G[V_2\cup\{v\}]$. Let $D_1$ and $D_2$ be the diagonal matrices indexed by
$V_1\cup \{v\}$ and $V_2\cup \{v\}$ respectively such that the diagonal entries are $D_1(v_1)=D(v_1)$  for and $v_1\in
V_1\cup \{v\}$ and $D_2(v_2)=D(v_2)$ for any $v_2\in V_2\cup \{v\}$. Therefore, one can easily verify that
$D_1H(M)D_1^*=H(G_1^+)$ and $D_2H(M')D_2^*=H(G_2^+)$, and thus $M\in[G_1^+]$ and $M'\in[G_2^+]$.
\end{proof}

Now we are ready to present our main result.

\begin{theorem}\label{thm-f-4}
Let $M_G$ be a connected mixed graph on $n$ vertices. Then $\lambda_n> -\frac{1+\sqrt{5}}{2}$ if and only if
$M_G\in\mathcal{H}_1\cup \mathcal{H}_2\cup\mathcal{H}_3\cup\mathcal{H}_4$, where
\[\left\{\begin{array}{l}
\mathcal{H}_1=\{C_4^1, C_4^2, C_4^3,H_1,H_2,\ldots, H_{27}\},\\[2mm] \mathcal{H}_2=\{ M\bullet_{u,v} M'\mid u\in V(M),v\in
V(M'), M\in[K_3],M'\in[K_3]\cup
[K_4] \},\\[2mm]
\mathcal{H}_3=[K_n]=\{K_n[s,t]\mid s,t\ge0, s+t=n\},\\[2mm]
\mathcal{H}_4=\{M\bullet_{u,v} M'\mid u\in V(M),v\in V(M'), M\in[K_2],M'\in[K_{n-1}]\}.
\end{array}\right.\]
\end{theorem}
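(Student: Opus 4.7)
The plan is to prove both directions separately, leaning heavily on the structural reductions already established.

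For \emph{sufficiency}, I would verify $\lambda_n > -\tfrac{1+\sqrt{5}}{2}$ for each of the four classes. The class $\mathcal{H}_3=[K_n]$ is immediate since every representative has spectrum $\{n-1,[-1]^{n-1}\}$. For $\mathcal{H}_2$ and $\mathcal{H}_4$, Lemma \ref{lem-f-5} reduces the task to a single representative per switching class, namely $K_3\bullet K_3$, $K_3\bullet K_4$, and $K_2\bullet K_{n-1}$. In each case the apex together with the two clique complements gives an equitable 3-partition of $G$, reducing the non-trivial eigenvalues to the roots of an explicit cubic (the other eigenvalues being $-1$ with multiplicity $(s-1)+(t-1)$). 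I would verify directly that the smallest root lies in $(-\tfrac{1+\sqrt{5}}{2},-1)$ for the two finite cases, and for $\mathcal{H}_4$ I would use monotonicity: by Corollary \ref{cor-f-1} the sequence $\lambda_n(K_2\bullet K_{t+1})$ is non-increasing in $t$ and, by substituting $\lambda^2=1-\lambda$ in the cubic, converges to $-\tfrac{1+\sqrt{5}}{2}$ from above but never reaches it. The remaining $\mathcal{H}_1$ is a finite checklist of 30 graphs whose smallest eigenvalues can be computed directly.

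For \emph{necessity}, I would start from Lemma \ref{lem-f-12}, which already confines the underlying graph $G$ to a short list. For $G=K_{2,2}$, Lemma \ref{lem-x-2} forces $M_G\in\mathcal{C}_4\subseteq\mathcal{H}_1$. For $G=2K_2\nabla 2K_1$ and $G=(K_2\cup K_1)\nabla 2K_1$, Lemmas \ref{K2cupK1} and \ref{2K2jion} apply verbatim once one observes (via Lemma \ref{lem-f-6} and Lemma \ref{lem-x-2}) that the hypotheses on triangles and quadrangles hold under $\lambda_n>-\tfrac{1+\sqrt{5}}{2}$. The same triangle-in-$\mathcal{C}_3$/quadrangle-in-$\mathcal{C}_4$ classification applied to the remaining small graph $G=K_1\nabla K_{1,2}$ produces the rest of the $H_i$'s, completing $\mathcal{H}_1$.

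The substantive case is $G=(K_s\cup K_t)\nabla K_1$. Here $G$ is chordal with all cycles of length $3$, so Lemma \ref{lem-f-6} and Theorem \ref{lem-x-1} together give $M_G\in[G]$; iterating Lemma \ref{lem-x-3} at the cut vertex then identifies $[G]$ with the coalescence set $\{M\bullet_{u,v}M'\mid M\in[K_{s+1}],M'\in[K_{t+1}]\}$. It remains to decide, for which $(s,t)$, the spectrum of $G$ itself stays above the threshold. The equitable partition $\{\text{apex}\}\cup V(K_s)\cup V(K_t)$ yields a $3\times 3$ quotient whose characteristic polynomial, evaluated at $\lambda=-\tfrac{1+\sqrt{5}}{2}$ using $\lambda^2=1-\lambda$, simplifies to the linear expression $k\cdot\tfrac{3-\sqrt{5}}{2}-1$ with $k=(s-1)(t-1)$. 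This is positive precisely when $k\ge 3$, forcing a cubic root strictly below $-\tfrac{1+\sqrt{5}}{2}$ (since the cubic is monic and eventually negative as $\lambda\to-\infty$); consequently $k\le 2$, i.e., one of $s=0$, $t=0$ ($\mathcal{H}_3$), $\min\{s,t\}=1$ ($\mathcal{H}_4$), $\{s,t\}=\{2,2\}$ or $\{2,3\}$ ($\mathcal{H}_2$).

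The \textbf{main obstacle} is the threshold computation at $k=2$ versus $k=3$: the sign of the cubic at $-\tfrac{1+\sqrt{5}}{2}$ cleanly rules out $k\ge 3$, but the converse requires combining the sign information with interlacing/monotonicity to guarantee that the \emph{smallest} root, not merely some root, sits above the threshold. The limit argument along the chain $K_2\bullet K_2\subset K_2\bullet K_3\subset\cdots$ (and the corresponding short chain for $\mathcal{H}_2$) resolves this; the rest is bookkeeping across the case list provided by Lemma \ref{lem-f-12}.
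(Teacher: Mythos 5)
Your proposal is correct and follows essentially the same route as the paper: both directions rest on the same pillars (Lemma \ref{lem-f-12} for the underlying graphs, Lemmas \ref{2K2jion} and \ref{K2cupK1} for the two five-vertex cases, Theorem \ref{lem-x-1} and Lemma \ref{lem-x-3} to reduce $(K_s\cup K_t)\nabla K_1$ to coalescences of complete graphs, and the equitable-partition cubic evaluated at $-\tfrac{1+\sqrt{5}}{2}$), and your reformulation of the threshold as $(s-1)(t-1)\cdot\tfrac{3-\sqrt{5}}{2}-1\le 0$ is algebraically identical to the paper's casework on $t\le 2$. The only genuine deviation is your monotonicity-plus-limit argument for the sufficiency of $\mathcal{H}_4$, where the paper instead reads off the sign pattern of the cubic directly; both are valid, and yours has the small merit of explaining \emph{why} $-\tfrac{1+\sqrt{5}}{2}$ is the natural threshold (it is the limit of $\lambda_{\min}(K_2\bullet K_{n-1})$).
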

\begin{proof}
To prove the sufficiency, it only needs to show that each graph in
$\mathcal{H}_1\cup\mathcal{H}_2\cup\mathcal{H}_3\cup\mathcal{H}_4$ has smallest eigenvalue greater than
$-\frac{1+\sqrt{5}}{2}$. By immediate calculations, the smallest eigenvalues of $C_4^1,C_4^2,C_4^3$ are all $-\sqrt{2}>
-\frac{1+\sqrt{5}}{2}$ and the smallest eigenvalues of $H_1, H_2,\ldots,H_{27}$ are all $-1.56> -\frac{1+\sqrt{5}}{2}$ (see
the Appendix). For any
$M\in[K_3]$ and $M'\in[K_3]$, Lemma \ref{lem-x-3} implies that $\operatorname{Sp}(M\bullet_{u,v}
M')=\operatorname{Sp}(K_3\bullet_{u,v}
K_3)$. Thus, we have $\operatorname{Sp}(M\bullet_{u,v} M')=\{2.56,1,[-1]^2,-1.56\}$ by immediate calculations. Similarly, if
$M\in
[K_3]$ and $M'\in [K_4]$, we have $\operatorname{Sp}(M\bullet_{u,v} M')=\{3.26,1.34,[-1]^3,-1.60\}$.  Theorem \ref{thm-f-1}
implies that $K_n[s,t]$ has smallest eigenvalue $-1$. For any $M_G\in\mathcal{H}_4$, Lemma \ref{lem-x-3} implies that
$\operatorname{Sp}(M_G)=\operatorname{Sp}(K_{n-1}\bullet_{u,v} K_2)$, whose smallest eigenvalue are the smallest root of
$\varphi(x)=x^3 + (3 - n)x^2 + (1 - n)x - 1=0$. Note that $\varphi(-1)=0$, $\varphi(-\frac{1+\sqrt{5}}{2})=1-n<0$ for $n\geq
2$. The smallest root of $\varphi(x)$ is greater than $-\frac{1+\sqrt{5}}{2}$ by the image of $\varphi(x)$, and thus
$\lambda_n(M_G)> -\frac{1+\sqrt{5}}{2}$.

In what follows, we show the necessity.
Since $\lambda_n(M_G)> -\frac{1+\sqrt{5}}{2}$, Lemmas \ref{lem-f-6} and \ref{lem-x-2} indicate that
any mixed triangle of $M_G$ belongs to $\mathcal{C}_3$ and any mixed induced quadrangle of $M_{G}$ belongs to
$\mathcal{C}_4$.
From Lemma \ref{lem-f-12}, the underlying graph $G$ belongs to
\[
\left\{ K_{2,2}, K_1\nabla K_{1,2}, 2K_2\nabla 2K_1, (K_2\cup K_1)\nabla 2K_1 \right\}\bigcup \left\{(K_s \cup
K_t)\nabla K_1\mid s+t=n-1\right\}.
\]

If $G= K_{2,2}$,   then $M_G\in\{ C_4^1, C_4^2, C_4^3\}\subseteq \mathcal{H}_1$ due to Lemma \ref{lem-x-2}. If $G= K_1\nabla
K_{1,2}$, then $G$ contains no induced cycle with length greater than $3$.
Thus, Theorem \ref{lem-x-1} implies that $M_G\in[K_1\nabla K_{1,2}]=\{H_{21},\ldots, H_{27}\}\subseteq\mathcal{H}_1$.
If $G=2K_2\nabla 2K_1$ or $(K_2\cup K_1)\nabla 2K_1$, then $M_G\in\{H_1,\ldots,H_{20}\}\subseteq\mathcal{H}_1$ due to Lemmas
\ref{2K2jion} and \ref{K2cupK1}.

If $G=(K_s \cup K_t)\nabla K_1$ with $s=0$ or $t=0$, then $G= K_n$. Since any mixed triangle of $M_G$ belongs to
$\mathcal{C}_3$, Theorem \ref{thm-f-1} means that $M_{G}= M_{K_n}\in\{K_{n}[s,t]\mid s,t\ge 0, s+t=n\}=[K_n]=\mathcal{H}_3$.
Now we suppose $G=(K_s \cup K_t)\nabla K_1$ with $s,t\ge 1$ and $s\ge t$. Note that $G$ contains no induced cycle with
length greater than $3$, Theorem \ref{lem-x-1} indicates that $M_{G}\in [G]$ and thus $\operatorname{Sp}(M_G)
=\operatorname{Sp}(G)$. Note that $[(K_s \cup K_t)\nabla K_1]=\{M\bullet_{u,v} M'\mid M\in [K_{s+1}], M'\in[K_{t+1}]\}$ due
to Lemma \ref{lem-x-3}. Assume that $\pi$: $V(G)=V_1\cup\{v\}\cup V_2 $ is the partition such that $G[V_1\cup\{v\}
]=K_{s+1}$ and $G[V_2\cup\{v\} ]=K_{t+1}$. The Hermitian matrix of $G$ is
\[H(G)=\begin{pmatrix}J_s-I_s&\mathbf{1}_s&\mathbf{0}_{s\times t}\\\mathbf{1}^T_s&\mathbf{0}_{t\times
s}&\mathbf{1}_t^T\\0&\mathbf{1}&J_s-I_s\end{pmatrix},\]
where $J$, $I$, $\mathbf{1}$ and $\mathbf{0}$ are respectively the all-one matrix, identity matrix, all-one vector and zero
matrix with the corresponding size. Therefore, Lemma \ref{lem-f-2} indicates that $\pi$ is an equitable partition with
quotient matrix
\[H_{\pi}=\begin{pmatrix}
s-1 &1&0\\s&0&t\\0 &1&t-1
\end{pmatrix}.
\]
Assume that $V_1=\{v_1,v_2,\ldots,v_s\}$ and $V_2=\{u_1,u_2,\ldots,u_t\}$. For $1\le j\le s$ and $1\le k\le t$, let
$\delta_{1,j}\in\mathbb{R}^s$ be the vector indexed by $V_1$ such that $\delta_{1,j}(v_1)=1$, $\delta_{1,j}(v_j)=-1$ and
$\delta_{1,j}(v_{j'})=0$ for $j'\not\in\{1,j\}$ and let $\delta_{2,k}$ be the vector indexed by $V_2$ such that
$\delta_{2,k}(u_1)=1$, $\delta_{2,k}(u_k)=-1$ and $\delta_{2,k}(u_{k'})=0$ for $k'\not\in\{1,k\}$. It is easy to see that
$H(G)\delta_{1,j}=-\delta_{1,j}$ and $H(G)\delta_{2,k}=-\delta_{2,k}$ for any $j$ and $k$, and thus $H$ has an eigenvalue
$-1$ with multiplicity at least $s+t-2=n-3$. Lemma \ref{lem-f-2} implies that the other three eigenvalues of $G$ are just
the roots $\epsilon_1\ge\epsilon_2\ge\epsilon_3$ of the function $f(x)=det(xI-B_{\pi})=x^3 + (2-t -s)x^2 + (st -2t - 2s +
1)x - s - t + 2st$, and thus $\epsilon_3=\lambda_n(G)>-\frac{1+\sqrt{5}}{2}$. It is clear that $f(0)=st-s-t\ge0$. Note that
$\epsilon_1>0$. By the image of the function $f(x)$, we have $f(-\frac{1+\sqrt{5}}{2})<0$. If $t\ge 3$ then
\[f(-\frac{1+\sqrt{5}}{2})=\frac{3-\sqrt{5}}{2}(st-s-t)+\frac{1-\sqrt{5}}{2}\ge\frac{3-\sqrt{5}}{2}s-\frac{1-\sqrt{5}}{2}\ge5-2\sqrt{5}>0,\]
a contradiction. Thus, we have $t\le 2$. If $t=2$ then
$f(-\frac{1+\sqrt{5}}{2})=\frac{3-\sqrt{5}}{2}s-\frac{5-\sqrt{5}}{2}<0$. It leads to $s<\frac{5+\sqrt{5}}{2}\approx 3.62$.
Thus, we have $s=2$ or $3$ since $s\ge t=2$. It means $M_G\in[(K_2\cup K_2)\nabla K_1]\cup [(K_3\cup K_2)\nabla
K_1]=\{M\bullet_{u,v} M'\mid M\in [K_3], M'\in [K_3]\cup[K_4]\}=\mathcal{H}_2$. If $t=1$ then
$f(-\frac{1+\sqrt{5}}{2})=-1<0$ always holds. Thus, $s\ge t=1$ and $M_G\in[(K_s\cup K_1)\nabla K_1]=\{M\bullet_{u,v} M'\mid
M\in[K_2],M'\in[K_{n-1}]\}=\mathcal{H}_4$.

This completes the proof.
\end{proof}

\section*{Acknowledgments}
The first author is supported by National Natural Science Foundation of China (No. 12001544).
The second author is supported by National Natural Science Foundation of China (Nos. 12061074, 11701492) and the China
Postdoctoral Science Foundation (No. 2019M661398). The third author is supported by
National Natural Science Foundation of China (No. 11671344).

\newpage
\section*{Appendix: The mixed graphs $\mathcal{C}_4$ and $H_1,\ldots,H_{27}$ with their smallest eigenvalues.}
\begin{figure}[htbp]
\begin{center}
\unitlength 3.500mm 
\linethickness{0.4pt}
\ifx\plotpoint\undefined\newsavebox{\plotpoint}\fi 
\begin{picture}(96.136,57.554)(0,0)
\thicklines
\put(95.886,.25){\line(0,1){0}}
\put(95.886,.25){\circle*{.5}}
\put(95.886,3.25){\line(0,1){0}}
\put(95.886,6.25){\circle*{.5}}
\put(2.25,48.304){\line(1,0){4}}
\put(4.25,41.304){\line(4,3){4}}
\multiput(8.25,44.304)(-.03333333,.06666667){60}{\line(0,1){.06666667}}
\multiput(2.25,48.304)(-.03333333,-.06666667){60}{\line(0,-1){.06666667}}
\put(.25,44.304){\line(4,-3){4}}
\put(2.25,48.304){\line(3,-2){6}}
\put(6.25,48.304){\line(-3,-2){6}}
\put(2.25,48.304){\circle*{.5}}
\put(6.25,48.304){\circle*{.5}}
\put(.25,44.304){\circle*{.5}}
\put(8.25,44.304){\circle*{.5}}
\put(11.25,48.304){\line(1,0){4}}
\put(13.25,41.304){\line(4,3){4}}
\multiput(17.25,44.304)(-.03333333,.06666667){60}{\line(0,1){.06666667}}
\multiput(11.25,48.304)(-.03333333,-.06666667){60}{\line(0,-1){.06666667}}
\put(11.25,48.304){\line(3,-2){6}}
\put(15.25,48.304){\line(-3,-2){6}}
\put(11.25,48.304){\circle*{.5}}
\put(15.25,48.304){\circle*{.5}}
\put(9.25,44.304){\circle*{.5}}
\put(17.25,44.304){\circle*{.5}}
\put(20.25,48.304){\line(1,0){4}}
\put(22.25,41.304){\line(4,3){4}}
\multiput(26.25,44.304)(-.03333333,.06666667){60}{\line(0,1){.06666667}}
\multiput(20.25,48.304)(-.03333333,-.06666667){60}{\line(0,-1){.06666667}}
\put(18.25,44.304){\line(4,-3){4}}
\put(20.25,48.304){\line(3,-2){6}}
\put(24.25,48.304){\line(-3,-2){6}}
\put(20.25,48.304){\circle*{.5}}
\put(24.25,48.304){\circle*{.5}}
\put(18.25,44.304){\circle*{.5}}
\put(26.25,44.304){\circle*{.5}}
\put(22.25,41.304){\circle*{.5}}
\put(29.25,48.304){\line(1,0){4}}
\put(31.25,41.304){\line(4,3){4}}
\multiput(35.25,44.304)(-.03333333,.06666667){60}{\line(0,1){.06666667}}
\multiput(29.25,48.304)(-.03333333,-.06666667){60}{\line(0,-1){.06666667}}
\put(27.25,44.304){\line(4,-3){4}}
\put(29.25,48.304){\line(3,-2){6}}
\put(33.25,48.304){\line(-3,-2){6}}
\put(29.25,48.304){\circle*{.5}}
\put(33.25,48.304){\circle*{.5}}
\put(27.25,44.304){\circle*{.5}}
\put(35.25,44.304){\circle*{.5}}
\put(31.25,41.304){\circle*{.5}}
\put(2.25,57.304){\line(1,0){4}}
\put(4.25,50.304){\line(4,3){4}}
\multiput(8.25,53.304)(-.03333333,.06666667){60}{\line(0,1){.06666667}}
\multiput(2.25,57.304)(-.03333333,-.06666667){60}{\line(0,-1){.06666667}}
\put(.25,53.304){\line(4,-3){4}}
\put(2.25,57.304){\line(3,-2){6}}
\put(6.25,57.304){\line(-3,-2){6}}
\put(2.25,57.304){\circle*{.5}}
\put(6.25,57.304){\circle*{.5}}
\put(.25,53.304){\circle*{.5}}
\put(8.25,53.304){\circle*{.5}}
\put(4.25,50.304){\circle*{.5}}
\put(11.25,57.304){\line(1,0){4}}
\put(13.25,50.304){\line(4,3){4}}
\multiput(17.25,53.304)(-.03333333,.06666667){60}{\line(0,1){.06666667}}
\multiput(11.25,57.304)(-.03333333,-.06666667){60}{\line(0,-1){.06666667}}
\put(9.25,53.304){\line(4,-3){4}}
\put(11.25,57.304){\line(3,-2){6}}
\put(15.25,57.304){\line(-3,-2){6}}
\put(11.25,57.304){\circle*{.5}}
\put(15.25,57.304){\circle*{.5}}
\put(9.25,53.304){\circle*{.5}}
\put(17.25,53.304){\circle*{.5}}
\put(13.25,50.304){\circle*{.5}}
\put(20.25,57.304){\line(1,0){4}}
\put(22.25,50.304){\line(4,3){4}}
\multiput(26.25,53.304)(-.03333333,.06666667){60}{\line(0,1){.06666667}}
\multiput(20.25,57.304)(-.03333333,-.06666667){60}{\line(0,-1){.06666667}}
\put(20.25,57.304){\line(3,-2){6}}
\put(24.25,57.304){\line(-3,-2){6}}
\put(20.25,57.304){\circle*{.5}}
\put(24.25,57.304){\circle*{.5}}
\put(18.25,53.304){\circle*{.5}}
\put(26.25,53.304){\circle*{.5}}
\put(22.25,50.304){\circle*{.5}}
\put(29.25,57.304){\line(1,0){4}}
\put(31.25,50.304){\line(4,3){4}}
\multiput(35.25,53.304)(-.03333333,.06666667){60}{\line(0,1){.06666667}}
\multiput(29.25,57.304)(-.03333333,-.06666667){60}{\line(0,-1){.06666667}}
\put(27.25,53.304){\line(4,-3){4}}
\put(29.25,57.304){\line(3,-2){6}}
\put(33.25,57.304){\line(-3,-2){6}}
\put(29.25,57.304){\circle*{.5}}
\put(33.25,57.304){\circle*{.5}}
\put(27.25,53.304){\circle*{.5}}
\put(35.25,53.304){\circle*{.5}}
\put(31.25,50.304){\circle*{.5}}
\put(38.25,57.304){\line(1,0){4}}
\put(40.25,50.304){\line(4,3){4}}
\multiput(44.25,53.304)(-.03333333,.06666667){60}{\line(0,1){.06666667}}
\multiput(38.25,57.304)(-.03333333,-.06666667){60}{\line(0,-1){.06666667}}
\put(36.25,53.304){\line(4,-3){4}}
\put(38.25,57.304){\line(3,-2){6}}
\put(42.25,57.304){\line(-3,-2){6}}
\put(38.25,57.304){\circle*{.5}}
\put(42.25,57.304){\circle*{.5}}
\put(36.25,53.304){\circle*{.5}}
\put(44.25,53.304){\circle*{.5}}
\put(40.25,50.304){\circle*{.5}}
\put(13.25,39.304){\line(-1,-1){4}}
\put(9.25,35.304){\line(1,0){3}}
\multiput(12.25,35.304)(.0333333,.1333333){30}{\line(0,1){.1333333}}
\multiput(13.25,39.304)(.0333333,-.1333333){30}{\line(0,-1){.1333333}}
\put(14.25,35.304){\line(1,0){3}}
\put(17.25,35.304){\line(-1,1){4}}
\put(9.25,35.304){\line(1,-1){4}}
\put(13.25,31.304){\line(1,1){4}}
\multiput(14.25,35.304)(-.0333333,-.1333333){30}{\line(0,-1){.1333333}}
\multiput(13.25,31.304)(-.0333333,.1333333){30}{\line(0,1){.1333333}}
\put(12.25,35.304){\line(0,1){0}}
\put(13.25,39.304){\circle*{.5}}
\put(9.25,35.304){\circle*{.5}}
\put(13.25,31.304){\circle*{.5}}
\put(17.25,35.304){\circle*{.5}}
\put(14.25,35.304){\circle*{.5}}
\put(12.25,35.304){\circle*{.5}}
\put(22.25,39.304){\line(-1,-1){4}}
\put(18.25,35.304){\line(1,0){3}}
\multiput(21.25,35.304)(.0333333,.1333333){30}{\line(0,1){.1333333}}
\multiput(22.25,39.304)(.0333333,-.1333333){30}{\line(0,-1){.1333333}}
\put(23.25,35.304){\line(1,0){3}}
\put(26.25,35.304){\line(-1,1){4}}
\put(18.25,35.304){\line(1,-1){4}}
\put(22.25,31.304){\line(1,1){4}}
\multiput(23.25,35.304)(-.0333333,-.1333333){30}{\line(0,-1){.1333333}}
\multiput(22.25,31.304)(-.0333333,.1333333){30}{\line(0,1){.1333333}}
\put(21.25,35.304){\line(0,1){0}}
\put(22.25,39.304){\circle*{.5}}
\put(18.25,35.304){\circle*{.5}}
\put(22.25,31.304){\circle*{.5}}
\put(26.25,35.304){\circle*{.5}}
\put(23.25,35.304){\circle*{.5}}
\put(21.25,35.304){\circle*{.5}}
\put(31.25,39.304){\line(-1,-1){4}}
\put(27.25,35.304){\line(1,0){3}}
\multiput(30.25,35.304)(.0333333,.1333333){30}{\line(0,1){.1333333}}
\multiput(31.25,39.304)(.0333333,-.1333333){30}{\line(0,-1){.1333333}}
\put(32.25,35.304){\line(1,0){3}}
\put(35.25,35.304){\line(-1,1){4}}
\put(27.25,35.304){\line(1,-1){4}}
\put(31.25,31.304){\line(1,1){4}}
\multiput(32.25,35.304)(-.0333333,-.1333333){30}{\line(0,-1){.1333333}}
\multiput(31.25,31.304)(-.0333333,.1333333){30}{\line(0,1){.1333333}}
\put(30.25,35.304){\line(0,1){0}}
\put(31.25,39.304){\circle*{.5}}
\put(27.25,35.304){\circle*{.5}}
\put(31.25,31.304){\circle*{.5}}
\put(35.25,35.304){\circle*{.5}}
\put(32.25,35.304){\circle*{.5}}
\put(30.25,35.304){\circle*{.5}}
\put(40.25,39.304){\line(-1,-1){4}}
\put(36.25,35.304){\line(1,0){3}}
\multiput(39.25,35.304)(.0333333,.1333333){30}{\line(0,1){.1333333}}
\multiput(40.25,39.304)(.0333333,-.1333333){30}{\line(0,-1){.1333333}}
\put(41.25,35.304){\line(1,0){3}}
\put(44.25,35.304){\line(-1,1){4}}
\put(36.25,35.304){\line(1,-1){4}}
\put(40.25,31.304){\line(1,1){4}}
\multiput(41.25,35.304)(-.0333333,-.1333333){30}{\line(0,-1){.1333333}}
\multiput(40.25,31.304)(-.0333333,.1333333){30}{\line(0,1){.1333333}}
\put(39.25,35.304){\line(0,1){0}}
\put(40.25,39.304){\circle*{.5}}
\put(36.25,35.304){\circle*{.5}}
\put(40.25,31.304){\circle*{.5}}
\put(44.25,35.304){\circle*{.5}}
\put(41.25,35.304){\circle*{.5}}
\put(39.25,35.304){\circle*{.5}}
\put(40.25,49.304){\line(-1,-1){4}}
\put(36.25,45.304){\line(1,0){3}}
\multiput(39.25,45.304)(.0333333,.1333333){30}{\line(0,1){.1333333}}
\multiput(40.25,49.304)(.0333333,-.1333333){30}{\line(0,-1){.1333333}}
\put(41.25,45.304){\line(1,0){3}}
\put(44.25,45.304){\line(-1,1){4}}
\put(36.25,45.304){\line(1,-1){4}}
\put(40.25,41.304){\line(1,1){4}}
\multiput(41.25,45.304)(-.0333333,-.1333333){30}{\line(0,-1){.1333333}}
\multiput(40.25,41.304)(-.0333333,.1333333){30}{\line(0,1){.1333333}}
\put(39.25,45.304){\line(0,1){0}}
\put(40.25,49.304){\circle*{.5}}
\put(36.25,45.304){\circle*{.5}}
\put(44.25,45.304){\circle*{.5}}
\put(41.25,45.304){\circle*{.5}}
\put(39.25,45.304){\circle*{.5}}
\put(1.25,55.304){\vector(-1,-2){.07}}\multiput(2.25,57.304)(-.03333333,-.06666667){60}{\line(0,-1){.06666667}}
\put(3.25,55.304){\vector(-3,-2){.07}}\multiput(6.25,57.304)(-.050420168,-.033613445){119}{\line(-1,0){.050420168}}
\put(7.25,55.304){\vector(-1,2){.07}}\multiput(8.25,53.304)(-.03333333,.06666667){60}{\line(0,1){.06666667}}
\put(5.25,55.304){\vector(-3,2){.07}}\multiput(8.25,53.304)(-.050420168,.033613445){119}{\line(-1,0){.050420168}}
\put(2.25,51.804){\vector(4,-3){.07}}\multiput(.25,53.304)(.04494382,-.03370787){89}{\line(1,0){.04494382}}
\put(6.25,51.804){\vector(-4,-3){.07}}\multiput(8.25,53.304)(-.04494382,-.03370787){89}{\line(-1,0){.04494382}}
\put(13.25,57.304){\vector(1,0){.07}}\put(11.25,57.304){\line(1,0){4}}
\put(10.25,55.304){\vector(-1,-2){.07}}\multiput(11.25,57.304)(-.03333333,-.06666667){60}{\line(0,-1){.06666667}}
\put(14.25,55.304){\vector(3,-2){.07}}\multiput(11.25,57.304)(.050420168,-.033613445){119}{\line(1,0){.050420168}}
\put(11.25,51.804){\vector(4,-3){.07}}\multiput(9.25,53.304)(.04494382,-.03370787){89}{\line(1,0){.04494382}}
\put(15.25,51.804){\vector(4,3){.07}}\multiput(13.25,50.304)(.04494382,.03370787){89}{\line(1,0){.04494382}}
\put(19.25,55.304){\vector(-1,-2){.07}}\multiput(20.25,57.304)(-.03333333,-.06666667){60}{\line(0,-1){.06666667}}
\put(21.25,55.304){\vector(-3,-2){.07}}\multiput(24.25,57.304)(-.050420168,-.033613445){119}{\line(-1,0){.050420168}}
\put(23.25,55.304){\vector(3,-2){.07}}\multiput(20.25,57.304)(.050420168,-.033613445){119}{\line(1,0){.050420168}}
\put(25.25,55.304){\vector(1,-2){.07}}\multiput(24.25,57.304)(.03333333,-.06666667){60}{\line(0,-1){.06666667}}
\put(24.25,51.804){\vector(4,3){.07}}\multiput(22.25,50.304)(.04494382,.03370787){89}{\line(1,0){.04494382}}
\put(20.25,51.804){\vector(4,-3){.07}}\multiput(18.25,53.304)(.04494382,-.03370787){89}{\line(1,0){.04494382}}
\put(34.25,55.304){\vector(-1,2){.07}}\multiput(35.25,53.304)(-.03333333,.06666667){60}{\line(0,1){.06666667}}
\put(32.25,55.304){\vector(-3,2){.07}}\multiput(35.25,53.304)(-.050420168,.033613445){119}{\line(-1,0){.050420168}}
\put(29.25,51.804){\vector(4,-3){.07}}\multiput(27.25,53.304)(.04494382,-.03370787){89}{\line(1,0){.04494382}}
\put(39.25,55.304){\vector(-3,-2){.07}}\multiput(42.25,57.304)(-.050420168,-.033613445){119}{\line(-1,0){.050420168}}
\put(37.25,55.304){\vector(-1,-2){.07}}\multiput(38.25,57.304)(-.03333333,-.06666667){60}{\line(0,-1){.06666667}}
\put(42.25,51.804){\vector(4,3){.07}}\multiput(40.25,50.304)(.04494382,.03370787){89}{\line(1,0){.04494382}}
\put(1.25,46.304){\vector(-1,-2){.07}}\multiput(2.25,48.304)(-.03333333,-.06666667){60}{\line(0,-1){.06666667}}
\put(3.25,46.304){\vector(-3,-2){.07}}\multiput(6.25,48.304)(-.050420168,-.033613445){119}{\line(-1,0){.050420168}}
\put(7.25,46.304){\vector(-1,2){.07}}\multiput(8.25,44.304)(-.03333333,.06666667){60}{\line(0,1){.06666667}}
\put(5.25,46.304){\vector(-3,2){.07}}\multiput(8.25,44.304)(-.050420168,.033613445){119}{\line(-1,0){.050420168}}
\put(10.25,46.304){\vector(-1,-2){.07}}\multiput(11.25,48.304)(-.03333333,-.06666667){60}{\line(0,-1){.06666667}}
\put(12.25,46.304){\vector(-3,-2){.07}}\multiput(15.25,48.304)(-.050420168,-.033613445){119}{\line(-1,0){.050420168}}
\put(20.25,42.804){\vector(4,-3){.07}}\multiput(18.25,44.304)(.04494382,-.03370787){89}{\line(1,0){.04494382}}
\put(24.25,42.804){\vector(4,3){.07}}\multiput(22.25,41.304)(.04494382,.03370787){89}{\line(1,0){.04494382}}
\put(28.25,46.304){\vector(-1,-2){.07}}\multiput(29.25,48.304)(-.03333333,-.06666667){60}{\line(0,-1){.06666667}}
\put(29.25,42.804){\vector(4,-3){.07}}\multiput(27.25,44.304)(.04494382,-.03370787){89}{\line(1,0){.04494382}}
\put(34.25,46.304){\vector(-1,2){.07}}\multiput(35.25,44.304)(-.03333333,.06666667){60}{\line(0,1){.06666667}}
\put(31.25,48.304){\vector(1,0){.07}}\put(29.25,48.304){\line(1,0){4}}
\put(38.25,47.304){\vector(-1,-1){.07}}\multiput(40.25,49.304)(-.033613445,-.033613445){119}{\line(0,-1){.033613445}}
\put(38.25,43.304){\vector(1,-1){.07}}\multiput(36.25,45.304)(.033613445,-.033613445){119}{\line(0,-1){.033613445}}
\put(39.75,47.304){\vector(-1,-4){.07}}\multiput(40.25,49.304)(-.0333333,-.1333333){30}{\line(0,-1){.1333333}}
\put(39.75,43.304){\vector(1,-4){.07}}\multiput(39.25,45.304)(.0333333,-.1333333){30}{\line(0,-1){.1333333}}
\put(40.75,47.304){\vector(1,-4){.07}}\multiput(40.25,49.304)(.0333333,-.1333333){30}{\line(0,-1){.1333333}}
\put(42.25,47.304){\vector(1,-1){.07}}\multiput(40.25,49.304)(.033613445,-.033613445){119}{\line(0,-1){.033613445}}
\put(40.75,43.304){\vector(1,4){.07}}\multiput(40.25,41.304)(.0333333,.1333333){30}{\line(0,1){.1333333}}
\put(42.25,43.304){\vector(1,1){.07}}\multiput(40.25,41.304)(.033613445,.033613445){119}{\line(0,1){.033613445}}
\put(11.25,37.304){\vector(1,1){.07}}\multiput(9.25,35.304)(.033613445,.033613445){119}{\line(0,1){.033613445}}
\put(12.75,37.304){\vector(1,4){.07}}\multiput(12.25,35.304)(.0333333,.1333333){30}{\line(0,1){.1333333}}
\put(13.75,37.304){\vector(-1,4){.07}}\multiput(14.25,35.304)(-.0333333,.1333333){30}{\line(0,1){.1333333}}
\put(15.25,37.304){\vector(-1,1){.07}}\multiput(17.25,35.304)(-.033613445,.033613445){119}{\line(0,1){.033613445}}
\put(11.25,33.304){\vector(1,-1){.07}}\multiput(9.25,35.304)(.033613445,-.033613445){119}{\line(1,0){.033613445}}
\put(12.75,33.304){\vector(1,-4){.07}}\multiput(12.25,35.304)(.0333333,-.1333333){30}{\line(0,-1){.1333333}}
\put(13.75,33.304){\vector(1,4){.07}}\multiput(13.25,31.304)(.0333333,.1333333){30}{\line(0,1){.1333333}}
\put(15.25,33.304){\vector(1,1){.07}}\multiput(13.25,31.304)(.033613445,.033613445){119}{\line(1,0){.033613445}}
\put(21.75,37.304){\vector(1,4){.07}}\multiput(21.25,35.304)(.0333333,.1333333){30}{\line(0,1){.1333333}}
\put(19.75,35.304){\vector(-1,0){.07}}\put(21.25,35.304){\line(-1,0){3}}
\put(21.75,33.304){\vector(1,-4){.07}}\multiput(21.25,35.304)(.0333333,-.1333333){30}{\line(0,-1){.1333333}}
\put(22.75,33.304){\vector(1,4){.07}}\multiput(22.25,31.304)(.0333333,.1333333){30}{\line(0,1){.1333333}}
\put(22.75,37.304){\vector(-1,4){.07}}\multiput(23.25,35.304)(-.0333333,.1333333){30}{\line(0,1){.1333333}}
\put(24.25,33.304){\vector(1,1){.07}}\multiput(22.25,31.304)(.033613445,.033613445){119}{\line(1,0){.033613445}}
\put(24.25,37.304){\vector(-1,1){.07}}\multiput(26.25,35.304)(-.033613445,.033613445){119}{\line(0,1){.033613445}}
\put(29.25,37.304){\vector(-1,-1){.07}}\multiput(31.25,39.304)(-.033613445,-.033613445){119}{\line(0,-1){.033613445}}
\put(30.75,37.304){\vector(-1,-4){.07}}\multiput(31.25,39.304)(-.0333333,-.1333333){30}{\line(0,-1){.1333333}}
\put(30.75,33.304){\vector(1,-4){.07}}\multiput(30.25,35.304)(.0333333,-.1333333){30}{\line(0,-1){.1333333}}
\put(29.25,33.304){\vector(1,-1){.07}}\multiput(27.25,35.304)(.033613445,-.033613445){119}{\line(1,0){.033613445}}
\put(38.25,37.304){\vector(-1,-1){.07}}\multiput(40.25,39.304)(-.033613445,-.033613445){119}{\line(0,-1){.033613445}}
\put(39.75,37.304){\vector(-1,-4){.07}}\multiput(40.25,39.304)(-.0333333,-.1333333){30}{\line(0,-1){.1333333}}
\put(40.75,37.304){\vector(-1,4){.07}}\multiput(41.25,35.304)(-.0333333,.1333333){30}{\line(0,1){.1333333}}
\put(42.75,35.304){\vector(1,0){.07}}\put(41.25,35.304){\line(1,0){3}}
\put(42.25,33.304){\vector(1,1){.07}}\multiput(40.25,31.304)(.033613445,.033613445){119}{\line(1,0){.033613445}}
\put(4.25,39.304){\line(-1,-1){4}}
\put(.25,35.304){\line(1,0){3}}
\multiput(3.25,35.304)(.0333333,.1333333){30}{\line(0,1){.1333333}}
\multiput(4.25,39.304)(.0333333,-.1333333){30}{\line(0,-1){.1333333}}
\put(5.25,35.304){\line(1,0){3}}
\put(.25,35.304){\line(1,-1){4}}
\multiput(5.25,35.304)(-.0333333,-.1333333){30}{\line(0,-1){.1333333}}
\multiput(4.25,31.304)(-.0333333,.1333333){30}{\line(0,1){.1333333}}
\put(3.25,35.304){\line(0,1){0}}
\put(4.25,39.304){\circle*{.5}}
\put(.25,35.304){\circle*{.5}}
\put(4.25,31.304){\circle*{.5}}
\put(8.25,35.304){\circle*{.5}}
\put(5.25,35.304){\circle*{.5}}
\put(3.25,35.304){\circle*{.5}}
\put(2.25,37.304){\vector(-1,-1){.07}}\multiput(4.25,39.304)(-.033613445,-.033613445){119}{\line(0,-1){.033613445}}
\put(2.25,33.304){\vector(1,-1){.07}}\multiput(.25,35.304)(.033613445,-.033613445){119}{\line(1,0){.033613445}}
\put(3.75,37.304){\vector(-1,-4){.07}}\multiput(4.25,39.304)(-.0333333,-.1333333){30}{\line(0,-1){.1333333}}
\put(3.75,33.304){\vector(1,-4){.07}}\multiput(3.25,35.304)(.0333333,-.1333333){30}{\line(0,-1){.1333333}}
\put(4.75,37.304){\vector(1,-4){.07}}\multiput(4.25,39.304)(.0333333,-.1333333){30}{\line(0,-1){.1333333}}
\put(4.75,33.304){\vector(1,4){.07}}\multiput(4.25,31.304)(.0333333,.1333333){30}{\line(0,1){.1333333}}
\put(4.25,41.304){\circle*{.5}}
\put(4.25,39.304){\line(1,-1){4}}
\put(8.25,35.304){\line(-1,-1){4}}
\put(4.25,29.304){\line(-1,-1){4}}
\put(.25,25.304){\line(1,0){3}}
\multiput(3.25,25.304)(.0333333,.1333333){30}{\line(0,1){.1333333}}
\multiput(4.25,29.304)(.0333333,-.1333333){30}{\line(0,-1){.1333333}}
\put(5.25,25.304){\line(1,0){3}}
\put(8.25,25.304){\line(-1,1){4}}
\put(4.25,21.304){\line(1,1){4}}
\multiput(5.25,25.304)(-.0333333,-.1333333){30}{\line(0,-1){.1333333}}
\put(3.25,25.304){\line(0,1){0}}
\put(4.25,29.304){\circle*{.5}}
\put(.25,25.304){\circle*{.5}}
\put(4.25,21.304){\circle*{.5}}
\put(8.25,25.304){\circle*{.5}}
\put(5.25,25.304){\circle*{.5}}
\put(3.25,25.304){\circle*{.5}}
\put(2.25,27.304){\vector(-1,-1){.07}}\multiput(4.25,29.304)(-.033613445,-.033613445){119}{\line(0,-1){.033613445}}
\put(3.75,27.304){\vector(-1,-4){.07}}\multiput(4.25,29.304)(-.0333333,-.1333333){30}{\line(0,-1){.1333333}}
\put(.25,25.304){\line(1,-1){4}}
\multiput(3.25,25.304)(.0333333,-.1333333){30}{\line(0,-1){.1333333}}
\put(4.25,21.304){\line(0,1){0}}
\put(9.25,25.304){\line(1,0){3}}
\multiput(13.25,29.304)(.0333333,-.1333333){30}{\line(0,-1){.1333333}}
\put(14.25,25.304){\line(1,0){3}}
\put(13.25,21.304){\line(1,1){4}}
\multiput(14.25,25.304)(-.0333333,-.1333333){30}{\line(0,-1){.1333333}}
\put(12.25,25.304){\line(0,1){0}}
\put(13.25,29.304){\circle*{.5}}
\put(9.25,25.304){\circle*{.5}}
\put(13.25,21.304){\circle*{.5}}
\put(17.25,25.304){\circle*{.5}}
\put(14.25,25.304){\circle*{.5}}
\put(12.25,25.304){\circle*{.5}}
\put(9.25,25.304){\line(1,-1){4}}
\multiput(12.25,25.304)(.0333333,-.1333333){30}{\line(0,-1){.1333333}}
\put(13.25,21.304){\line(0,1){0}}
\put(13.25,29.304){\line(-1,-1){4}}
\multiput(13.25,29.304)(-.0333333,-.1333333){30}{\line(0,-1){.1333333}}
\put(18.25,25.304){\line(1,0){3}}
\multiput(22.25,29.304)(.0333333,-.1333333){30}{\line(0,-1){.1333333}}
\put(23.25,25.304){\line(1,0){3}}
\put(26.25,25.304){\line(-1,1){4}}
\put(22.25,21.304){\line(1,1){4}}
\multiput(23.25,25.304)(-.0333333,-.1333333){30}{\line(0,-1){.1333333}}
\put(21.25,25.304){\line(0,1){0}}
\put(22.25,29.304){\circle*{.5}}
\put(18.25,25.304){\circle*{.5}}
\put(22.25,21.304){\circle*{.5}}
\put(26.25,25.304){\circle*{.5}}
\put(23.25,25.304){\circle*{.5}}
\put(21.25,25.304){\circle*{.5}}
\put(18.25,25.304){\line(1,-1){4}}
\multiput(21.25,25.304)(.0333333,-.1333333){30}{\line(0,-1){.1333333}}
\put(22.25,21.304){\line(0,1){0}}
\put(22.25,29.304){\line(-1,-1){4}}
\multiput(22.25,29.304)(-.0333333,-.1333333){30}{\line(0,-1){.1333333}}
\put(27.25,25.304){\line(1,0){3}}
\multiput(31.25,29.304)(.0333333,-.1333333){30}{\line(0,-1){.1333333}}
\put(32.25,25.304){\line(1,0){3}}
\put(35.25,25.304){\line(-1,1){4}}
\put(31.25,21.304){\line(1,1){4}}
\multiput(32.25,25.304)(-.0333333,-.1333333){30}{\line(0,-1){.1333333}}
\put(30.25,25.304){\line(0,1){0}}
\put(31.25,29.304){\circle*{.5}}
\put(27.25,25.304){\circle*{.5}}
\put(31.25,21.304){\circle*{.5}}
\put(35.25,25.304){\circle*{.5}}
\put(32.25,25.304){\circle*{.5}}
\put(30.25,25.304){\circle*{.5}}
\put(27.25,25.304){\line(1,-1){4}}
\multiput(30.25,25.304)(.0333333,-.1333333){30}{\line(0,-1){.1333333}}
\put(31.25,21.304){\line(0,1){0}}
\put(31.25,29.304){\line(-1,-1){4}}
\multiput(31.25,29.304)(-.0333333,-.1333333){30}{\line(0,-1){.1333333}}
\put(36.25,25.304){\line(1,0){3}}
\multiput(40.25,29.304)(.0333333,-.1333333){30}{\line(0,-1){.1333333}}
\put(41.25,25.304){\line(1,0){3}}
\put(44.25,25.304){\line(-1,1){4}}
\put(40.25,21.304){\line(1,1){4}}
\multiput(41.25,25.304)(-.0333333,-.1333333){30}{\line(0,-1){.1333333}}
\put(39.25,25.304){\line(0,1){0}}
\put(40.25,29.304){\circle*{.5}}
\put(36.25,25.304){\circle*{.5}}
\put(40.25,21.304){\circle*{.5}}
\put(44.25,25.304){\circle*{.5}}
\put(41.25,25.304){\circle*{.5}}
\put(39.25,25.304){\circle*{.5}}
\put(36.25,25.304){\line(1,-1){4}}
\multiput(39.25,25.304)(.0333333,-.1333333){30}{\line(0,-1){.1333333}}
\put(40.25,21.304){\line(0,1){0}}
\put(40.25,29.304){\line(-1,-1){4}}
\multiput(40.25,29.304)(-.0333333,-.1333333){30}{\line(0,-1){.1333333}}
\put(4.75,27.304){\vector(-1,4){.07}}\multiput(5.25,25.304)(-.0333333,.1333333){30}{\line(0,1){.1333333}}
\put(6.25,27.304){\vector(-1,1){.07}}\multiput(8.25,25.304)(-.033613445,.033613445){119}{\line(0,1){.033613445}}
\put(12.75,27.304){\vector(-1,-4){.07}}\multiput(13.25,29.304)(-.0333333,-.1333333){30}{\line(0,-1){.1333333}}
\put(11.25,23.304){\vector(1,-1){.07}}\multiput(9.25,25.304)(.033613445,-.033613445){119}{\line(0,-1){.033613445}}
\put(13.75,27.304){\vector(-1,4){.07}}\multiput(14.25,25.304)(-.0333333,.1333333){30}{\line(0,1){.1333333}}
\put(15.25,27.304){\vector(-1,1){.07}}\multiput(17.25,25.304)(-.033613445,.033613445){119}{\line(0,1){.033613445}}
\put(21.75,27.304){\vector(-1,-4){.07}}\multiput(22.25,29.304)(-.0333333,-.1333333){30}{\line(0,-1){.1333333}}
\put(20.25,23.304){\vector(1,-1){.07}}\multiput(18.25,25.304)(.033613445,-.033613445){119}{\line(0,-1){.033613445}}
\put(24.25,23.304){\vector(1,1){.07}}\multiput(22.25,21.304)(.033613445,.033613445){119}{\line(0,1){.033613445}}
\put(24.25,25.304){\vector(1,0){.07}}\put(23.25,25.304){\line(1,0){2}}
\put(22.75,27.304){\vector(-1,4){.07}}\multiput(23.25,25.304)(-.0333333,.1333333){30}{\line(0,1){.1333333}}
\put(30.75,27.304){\vector(-1,-4){.07}}\multiput(31.25,29.304)(-.0333333,-.1333333){30}{\line(0,-1){.1333333}}
\put(29.25,27.304){\vector(-1,-1){.07}}\multiput(31.25,29.304)(-.033613445,-.033613445){119}{\line(0,-1){.033613445}}
\put(31.75,23.304){\vector(1,4){.07}}\multiput(31.25,21.304)(.0333333,.1333333){30}{\line(0,1){.1333333}}
\put(33.25,23.304){\vector(1,1){.07}}\multiput(31.25,21.304)(.033613445,.033613445){119}{\line(0,1){.033613445}}
\put(38.25,23.304){\vector(1,-1){.07}}\multiput(36.25,25.304)(.033613445,-.033613445){119}{\line(0,-1){.033613445}}
\put(39.75,23.304){\vector(1,-4){.07}}\multiput(39.25,25.304)(.0333333,-.1333333){30}{\line(0,-1){.1333333}}
\put(40.75,27.304){\vector(-1,4){.07}}\multiput(41.25,25.304)(-.0333333,.1333333){30}{\line(0,1){.1333333}}
\put(42.25,27.304){\vector(-1,1){.07}}\multiput(44.25,25.304)(-.033613445,.033613445){119}{\line(0,1){.033613445}}
\put(7.55,53.304){\makebox(0,0)[cc]{\scriptsize$v_1$}}
\put(1.55,57.304){\makebox(0,0)[cc]{\scriptsize$v_2$}}
\put(.95,53.304){\makebox(0,0)[cc]{\scriptsize$v_3$}}
\put(3.45,50.304){\makebox(0,0)[cc]{\scriptsize$v_4$}}
\put(6.95,57.304){\makebox(0,0)[cc]{\scriptsize$v_5$}}
\put(4.25,49.304){\makebox(0,0)[cc]{\scriptsize$H_1, -\frac{\sqrt{5}+1}{2}$}}
\put(13.25,49.304){\makebox(0,0)[cc]{\scriptsize$H_2, -\frac{\sqrt{5}+1}{2}$}}
\put(22.25,49.304){\makebox(0,0)[cc]{\scriptsize$H_3, -\frac{\sqrt{5}+1}{2}$}}
\put(31.25,49.304){\makebox(0,0)[cc]{\scriptsize$H_4, -\frac{\sqrt{5}+1}{2}$}}
\put(40.80,49.884){\makebox(0,0)[cc]{\scriptsize$H_5, -\frac{\sqrt{5}+1}{2}$}}
\put(4.25,40.304){\makebox(0,0)[cc]{\scriptsize$H_6, -\frac{\sqrt{5}+1}{2}$}}
\put(13.25,41.304){\circle*{.5}}
\put(13.25,40.304){\makebox(0,0)[cc]{\scriptsize$H_7, -\frac{\sqrt{5}+1}{2}$}}
\put(22.25,40.304){\makebox(0,0)[cc]{\scriptsize$H_8, -\frac{\sqrt{5}+1}{2}$}}
\put(31.25,40.304){\makebox(0,0)[cc]{\scriptsize$H_9, -\frac{\sqrt{5}+1}{2}$}}
\put(40.25,41.304){\circle*{.5}}
\put(40.25,40.304){\makebox(0,0)[cc]{\scriptsize$H_{10}, -\frac{\sqrt{5}+1}{2}$}}
\put(4.25,30.304){\makebox(0,0)[cc]{\scriptsize$H_{11}, -\frac{\sqrt{5}+1}{2}$}}
\put(13.25,30.304){\makebox(0,0)[cc]{\scriptsize$H_{12}, -\frac{\sqrt{5}+1}{2}$}}
\put(22.25,30.304){\makebox(0,0)[cc]{\scriptsize$H_{13}, -\frac{\sqrt{5}+1}{2}$}}
\put(31.25,30.304){\makebox(0,0)[cc]{\scriptsize$H_{14}, -\frac{\sqrt{5}+1}{2}$}}
\put(40.25,30.304){\makebox(0,0)[cc]{\scriptsize$H_{15}, -\frac{\sqrt{5}+1}{2}$}}
\put(4.25,20.304){\makebox(0,0)[cc]{\scriptsize$H_{16}, -\frac{\sqrt{5}+1}{2}$}}
\put(13.25,20.304){\makebox(0,0)[cc]{\scriptsize$H_{17}, -\frac{\sqrt{5}+1}{2}$}}
\put(22.25,20.304){\makebox(0,0)[cc]{\scriptsize$H_{18}, -\frac{\sqrt{5}+1}{2}$}}
\put(31.25,20.304){\makebox(0,0)[cc]{\scriptsize$H_{19}, -\frac{\sqrt{5}+1}{2}$}}
\put(40.25,20.304){\makebox(0,0)[cc]{\scriptsize$H_{20}, -\frac{\sqrt{5}+1}{2}$}}
\put(41.00,49.100){\makebox(0,0)[cc]{\scriptsize$v_1$}}
\put(39.87,45.304){\makebox(0,0)[cc]{\scriptsize$v_2$}}
\put(40.95,41.304){\makebox(0,0)[cc]{\scriptsize$v_3$}}
\put(41.62,45.704){\makebox(0,0)[cc]{\scriptsize$v_4$}}
\put(36.25,44.604){\makebox(0,0)[cc]{\scriptsize$v_5$}}
\put(44.25,44.704){\makebox(0,0)[cc]{\scriptsize$v_6$}}
\put(11.364,42.858){\vector(4,-3){.07}}\multiput(9.364,44.358)(.04494382,-.03370787){89}{\line(1,0){.04494382}}
\put(1.25,13.304){\framebox(6,6)[cc]{}}
\put(1.25,19.304){\circle*{.5}}
\put(7.25,19.304){\circle*{.5}}
\put(1.25,13.304){\circle*{.5}}
\put(7.25,13.304){\circle*{.5}}
\put(10.25,13.304){\framebox(6,6)[cc]{}}
\put(10.25,19.304){\circle*{.5}}
\put(16.25,19.304){\circle*{.5}}
\put(10.25,13.304){\circle*{.5}}
\put(16.25,13.304){\circle*{.5}}
\put(19.25,19.304){\circle*{.5}}
\put(25.25,19.304){\circle*{.5}}
\put(19.25,13.304){\circle*{.5}}
\put(25.25,13.304){\circle*{.5}}
\put(28.25,13.304){\framebox(6,6)[cc]{}}
\put(28.25,19.304){\circle*{.5}}
\put(34.25,19.304){\circle*{.5}}
\put(28.25,13.304){\circle*{.5}}
\put(34.25,13.304){\circle*{.5}}
\put(37.25,13.304){\framebox(6,6)[cc]{}}
\put(37.25,19.304){\circle*{.5}}
\put(43.25,19.304){\circle*{.5}}
\put(37.25,13.304){\circle*{.5}}
\put(43.25,13.304){\circle*{.5}}
\put(1.25,5.304){\framebox(6,6)[cc]{}}
\put(1.25,11.304){\circle*{.5}}
\put(7.25,11.304){\circle*{.5}}
\put(1.25,5.304){\circle*{.5}}
\put(7.25,5.304){\circle*{.5}}
\put(10.25,5.304){\framebox(6,6)[cc]{}}
\put(10.25,11.304){\circle*{.5}}
\put(16.25,11.304){\circle*{.5}}
\put(10.25,5.304){\circle*{.5}}
\put(16.25,5.304){\circle*{.5}}
\put(19.25,5.304){\framebox(6,6)[cc]{}}
\put(19.25,11.304){\circle*{.5}}
\put(25.25,11.304){\circle*{.5}}
\put(19.25,5.304){\circle*{.5}}
\put(25.25,5.304){\circle*{.5}}
\put(28.25,5.304){\framebox(6,6)[cc]{}}
\put(28.25,11.304){\circle*{.5}}
\put(34.25,11.304){\circle*{.5}}
\put(28.25,5.304){\circle*{.5}}
\put(34.25,5.304){\circle*{.5}}
\put(37.25,5.304){\framebox(6,6)[cc]{}}
\put(37.25,11.304){\circle*{.5}}
\put(43.25,11.304){\circle*{.5}}
\put(37.25,5.304){\circle*{.5}}
\put(43.25,5.304){\circle*{.5}}
\put(13.25,19.304){\vector(1,0){.07}}\put(10.25,19.304){\line(1,0){6}}
\put(16.25,16.304){\vector(0,1){.07}}\put(16.25,13.304){\line(0,1){6}}
\put(22.25,19.304){\vector(-1,0){.07}}\put(25.25,19.304){\line(-1,0){6}}
\put(25.25,16.304){\vector(0,-1){.07}}\put(25.25,19.304){\line(0,-1){6}}
\put(31.25,19.304){\vector(-1,0){.07}}\put(34.25,19.304){\line(-1,0){6}}
\put(34.25,16.304){\vector(0,-1){.07}}\put(34.25,19.304){\line(0,-1){6}}
\put(28.25,16.304){\vector(0,1){.07}}\put(28.25,13.304){\line(0,1){6}}
\put(31.25,13.304){\vector(1,0){.07}}\put(28.25,13.304){\line(1,0){6}}
\put(40.25,19.304){\vector(-1,0){.07}}\put(43.25,19.304){\line(-1,0){6}}
\put(43.25,16.304){\vector(0,-1){.07}}\put(43.25,19.304){\line(0,-1){6}}
\put(37.25,16.304){\vector(0,-1){.07}}\put(37.25,19.304){\line(0,-1){6}}
\put(40.25,13.304){\vector(-1,0){.07}}\put(43.25,13.304){\line(-1,0){6}}
\put(4.25,11.304){\vector(1,0){.07}}\put(1.25,11.304){\line(1,0){6}}
\put(1.25,19.304){\line(1,-1){6}}
\put(10.25,19.304){\line(1,-1){6}}
\put(19.25,19.304){\line(1,-1){6}}
\put(28.25,19.304){\line(1,-1){6}}
\put(19.25,19.304){\line(0,-1){6}}
\put(19.25,13.304){\line(1,0){6}}
\put(37.25,19.304){\line(1,-1){6}}
\put(1.25,11.304){\line(1,-1){6}}
\put(10.25,11.304){\line(1,-1){6}}
\put(4.25,8.304){\vector(1,-1){.07}}\multiput(1.25,11.304)(.033707865,-.033707865){178}{\line(0,-1){.033707865}}
\put(1.25,8.304){\vector(0,-1){.07}}\put(1.25,11.304){\line(0,-1){6}}
\put(13.25,11.304){\vector(-1,0){.07}}\put(16.25,11.304){\line(-1,0){6}}
\put(13.25,8.304){\vector(-1,1){.07}}\multiput(16.25,5.304)(-.033707865,.033707865){178}{\line(0,1){.033707865}}
\put(10.25,8.304){\vector(0,1){.07}}\put(10.25,5.304){\line(0,1){6}}
\put(19.25,8.304){\vector(0,-1){.07}}\put(19.25,11.304){\line(0,-1){6}}
\put(22.25,5.304){\vector(1,0){.07}}\put(19.25,5.304){\line(1,0){6}}
\put(31.25,11.304){\vector(1,0){.07}}\put(28.25,11.304){\line(1,0){6}}
\put(31.25,5.304){\vector(-1,0){.07}}\put(34.25,5.304){\line(-1,0){6}}
\put(40.25,5.304){\vector(1,0){.07}}\put(37.25,5.304){\line(1,0){6}}
\put(43.25,8.304){\vector(0,1){.07}}\put(43.25,5.304){\line(0,1){6}}
\put(40.25,11.304){\vector(-1,0){.07}}\put(43.25,11.304){\line(-1,0){6}}
\put(37.25,8.304){\vector(0,1){.07}}\put(37.25,5.304){\line(0,1){6}}
\put(4.25,12.304){\makebox(0,0)[cc]{\scriptsize$H_{21}, -\frac{\sqrt{5}+1}{2}$}}
\put(13.25,12.304){\makebox(0,0)[cc]{\scriptsize$H_{22}, -\frac{\sqrt{5}+1}{2}$}}
\put(22.25,12.304){\makebox(0,0)[cc]{\scriptsize$H_{23}, -\frac{\sqrt{5}+1}{2}$}}
\put(31.25,12.304){\makebox(0,0)[cc]{\scriptsize$H_{24}, -\frac{\sqrt{5}+1}{2}$}}
\put(40.25,12.304){\makebox(0,0)[cc]{\scriptsize$H_{25}, -\frac{\sqrt{5}+1}{2}$}}
\put(3.25,4.304){\makebox(0,0)[cc]{\scriptsize$H_{26}, -\frac{\sqrt{5}+1}{2}$}}
\put(12.25,4.304){\makebox(0,0)[cc]{\scriptsize$H_{27}, -\frac{\sqrt{5}+1}{2}$}}
\put(22.25,4.304){\makebox(0,0)[cc]{\scriptsize$C_4^1, -\sqrt{2}$}}
\put(30.25,4.304){\makebox(0,0)[cc]{\scriptsize$C_4^2, -\sqrt{2}$}}
\put(40.25,4.304){\makebox(0,0)[cc]{\scriptsize$C_4^3, -\sqrt{2}$}}
\thicklines
\put(10.5,25.304){\vector(1,0){.07}}\put(9,25.304){\line(1,0){3}}
\put(19.5,25.304){\vector(1,0){.07}}\put(18,25.304){\line(1,0){3}}
\put(6.5,35.304){\vector(-1,0){.07}}\put(8,35.304){\line(-1,0){3}}
\end{picture}
\vspace{-2.5cm}
\end{center}
\end{figure}

\newpage
{\small

}

\begin{thebibliography}{xx}

\bibitem{Abdollahi} A. Abdollahi, E. R. van Dam and M. Jazaeri, Distance-regular Cayley graphs with least eigenvalue $-2$, Des. Codes Cryptogr. 84 (2017) 73--85.

\bibitem{Bollobas}
B. Bollob\'{a}s, V. Nikiforov, Graphs and Hermitian matrices: eigenvalue
interlacing, Discrete Mathematics 289 (2004) 119--127.

\bibitem{Brouwer1}A.E. Brouwer and A. Neumaier, The graphs with spectral radius between $2$ and $\sqrt{2+\sqrt{5}}$, Linear
    Algebra Appl. 114/115 (1989) 273--276.

\bibitem{Cvetkovic1}D.M. Cvetkovi\'{c}, M. Doob, and I. Gutman, On graphs whose spectral radius does not exceed
    $\sqrt{2+\sqrt{5}}$, Ars Combin. 14 (1982) 255--239.

\bibitem{Cvetkovic2}D.M. Cvetkovi\'{c}, P. Rowlinson, and S. Simi\'{c}, Graphs with least eigenvalue $-2$: Ten years on,
    Linear Algebra Appl. 484 (2015) 504--539.

\bibitem{Cvetkovic3}D.M. Cvetkovi\'{c}, P. Rowlinson, and S. Simi\'{c}, Spectral Generalizations of Line Graphs: On Graphs
    with Least Eigenvalue $-2$, Cambridge University Press, Cambridge, 2004.


\bibitem{Godsil}
C.D. Godsil, G. Royle, Algebraic Graph Theory, Springer-Verlag, Berlin, 2001.

\bibitem{Guo}K. Guo and B. Mohar, Hermitian adjacency matrix of digraphs and mixed graphs, J. Graph Theory 85 (1) (2016)
    217--248.
 



\bibitem{Guo2}K. Guo and B. Mohar, Digraphs with Hermitian spectral radius below $2$ and their cospectrality with
paths, Discret. Math. 340 (2017) 2616--2632.

\bibitem{Hoffman1}
A.J. Hoffman, On limit points of spectral radii of non-negative symmetric integral matrices, in Graph Theory and
Application, Proceedings of a Conference at Western Michigan University, 1972, Lecture Notes in Math. 303 (Y. Alavi, D.R.
Lick, and A.T. White, Eds.), Springer, Berlin, 1972, pp. 165--172.

\bibitem{Hoffman2}A. Hoffman, On graphs whose least eigenvalue exceeds $-1-\sqrt{2}$, Linear Algebra Appl. 16 (1977)
    153--165.

\bibitem{Horn}
R.A. Horn, C.R. Johnson, Matrix Analysis, Cambridge University Press, 2013.

\bibitem{Koolen} J.H. Koolen, J. Yang and Q. Yang, On graphs with smallest eigenvalue at least $-3$ and their lattices, Advance Math. 338 (2018) 847--864.

\bibitem{Tani3}S. Kubota, T. Taniguchi, and K. Yoshino, On graphs with the smallest eigenvalue at least $-1-\sqrt{2}$, part
    III, Ars Math. Contemp. 17 (2019) 555--579.

\bibitem{Liu}J. Li and X. Li, Hermitian-adjacency matrices and Hermitian energies of mixed graphs, Linear Algebra Appl. 466
    (2015) 182--207.
    
\bibitem{Mohar} B. Mohar, Hermitian adjacency spectrum and switching equivalence of mixed graphs, Linear Algebra Appl. 489, 324--340 (2019).

\bibitem{Munemasa} A. Munemasa, Y. Sano and T. Taniguchi, Fat Hoffman graphs with smallest eigenvalue at least $-1-\tau$, Ars Math. Contemp. 7 (2014) 247--262.


\bibitem{Seinsche}
D. Seinsche, On a property of the class of $n$-colorable graphs. J. Combin. Theory Ser. B 16 (1974) 191--193 .


\bibitem{Smith}J.H. Smith,  Some properties of the spectrum of a graph, in Combinatorial Structures and their
Applications (Proc. Calgary Internat. Conf., Calgary, Alta., 1969), Gordon and Breach, New York,
pp. 403--406 (1970).


\bibitem{Tani1} T. Taniguchi, On graphs with the smallest eigenvalue at least $-1 -\sqrt{2}$, part I, Ars Math. Contemp. 1
    (2008) 81--98.

\bibitem{Tani2}T. Taniguchi, On graphs with the smallest eigenvalue at least $-1 -\sqrt{2}$, part II, Ars Math. Contemp. 5
    (2012) 243--258.
    
    
\bibitem{Wissing} P. Wissing and E. R. van Dam, The negative tetrahedron and the first infinite family of connected digraphs that are strongly determined by the Hermitian spectrum, J. Combin. Theory Ser. A 173 (2020): 105232.

\bibitem{Woo}R. Woo and A. Neumaier, On graphs whose spectral radius is bounded by $\frac{3}{2}\sqrt{2}$, Graphs Combin. 23
    (2007) 713--726.

\bibitem{Yuan} B. Yuan, Y. Wang, S. Gong, and Y. Qiao, On mixed graphs whose Hermitian spectral radii are at most $2$,
    Graphs Combin. https://10.1007/s00373-020-02181-w.

\end{thebibliography}
\end{document}